\documentclass[a4paper,12pt]{article}
\usepackage{amsmath}
\usepackage{bm}
\usepackage{amssymb} 
\usepackage{graphicx}
\usepackage{setspace}
\usepackage{enumitem}                         
\usepackage{lscape}
 
\usepackage{amsthm}
\newtheorem{thm}{Theorem}[section]  
\newtheorem{lemma}[thm]{Lemma}
\newtheorem{cor}[thm]{Corollary}
\newtheorem{prp}[thm]{Proposition}
\theoremstyle{definition}
\newtheorem{dfn}[thm]{Definition}
\newtheorem{nttn}[thm]{Notation}

\theoremstyle{remark}
\newtheorem{remark}[thm]{Remark}

\numberwithin{equation}{section} 

\makeatletter  
  \@addtoreset{equation}{section}
\makeatother

\usepackage{minitoc}                 

\setlength{\topmargin}{-2.5truecm}
\setlength{\oddsidemargin}{0.5truecm}
\setlength{\evensidemargin}{0.5truecm}
\setlength{\leftmargin}{1truecm}
\setlength{\textheight}{25truecm}
\setlength{\textwidth}{16.0truecm}

\setlength{\arraycolsep}{3pt}
\setlength{\tabcolsep}{3pt}
\setcounter{secnumdepth}{3}
\setcounter{tocdepth}{2}

\usepackage[usenames]{color}
  
  \newcommand{\red}[1]{\textcolor{red}{#1}}


\def\comment#1{ }

\overfullrule=0pt  


\author{
Yoshishige Haraoka \\
Hiroyuki Ochiai\\
Takeshi Sasaki\\
Masaaki Yoshida}

\title{\bf Shift operators of the Dotsenko-Fateev equation and its higher order versions}

\date{\today}

\begin{document} 

\maketitle


\begin{abstract}
  We find shift operators for the Dotsenko-Fateev equation, 
  which is a differential equation of order 3, and for the three Fuchsian differential equations of order 4, 5 and 6, respectively, which are connected with the Dotsenko-Fateev equation via addition and middle convolution. These shift operators are used to study reducible cases.
\end{abstract} 

\tableofcontents
\vfill

\noindent
{\bf Subjectclass}[2020]: Primary 34A30; Secondary 34M35, 33C05, 33C20, 34M03.

\noindent
{\bf Keywords}: Fuchsian differential equation, shift operator,
    reducibility,
  factorization, middle convolution, rigidity and accessory parameters,
  symmetry, hypergeometric differential equation, Dotsenko-Fateev equation.

\newpage
  
\section*{Introduction}
\addcontentsline{toc}{section}{\protect\numberline{}Introduction} 

In \cite{HOSY1}, we found a Fuchsian differential equation $H_6$ of order 6 with three singular points and with one accessory parameter, and its specialization $E_6$ with rich symmetry, that is defined by assigning the accessory parameter of $H_3$ a cubic polynomial of the local exponents. We found also the equations $E_5,E_4$ and $E_3$ of respective order 5, 4 and 3, that are connected with $E_6$ via addition and middle convolution (see \S \ref{GenAdd}). 

Here we briefly recall some terminology.
Given a differential equation with a local exponent $e$,
  which we denote by $E(e)$, a non-zero differential operator
  $P_{\pm}$ that sends solutions of $E(e)$ to those of $E(e\pm1)$
  is called the {\it shift operators} of $E(e)$
  for the shift of the parameter $e\to e\pm1$, and the operator is
  written as $P_{\pm}(e)$. 
These operators are important tools to see the structure of the space of solutions. If such operators $P_{\pm}$ exist, we define the operator
$Sv_{e}$ by $P_+(e-1)\circ P_-(e)$, which turns out to be a constant mod $E(e)$. 
We call such a constant the {\it S-value} for the shifts $e\to e\pm1$.
Note that when $Sv_{e}$ vanishes, $E(e)$ is {\it reducible}.

The Dotsenko-Fateev equation (\cite{DF}) is a codimension-2 specialization 
\footnote{For a Fuchsian equation $E$, a codimension-$k$ specialization of $E$ is an equation obtained by assuming $k$ linearly independent relations among the local exponents, apart from the Fuchsian relation.}
of $E_3$.  In this paper, this equation is denoted by $S\!E_3$. As $E_3$ is connected to $E_6, E_5$ and $E_4$, via addition and middle convolution,  
the equation $S\!E_3$ is lifted to codimension-2 specializations, say $S\!E_4, S\!E_5$ and $S\!E_6$,  of $E_4,E_5$ and $E_6$, respectively.

In this paper, we first find the shift operators of $S\!E_3$, and find those of  $S\!E_6,S\!E_5$ and $S\!E_4$.  These operators can be obtained theoretically from those of $S\!E_3$ by following the recipe in \cite{Osh} Chapter 11. However it is often difficult to carry out computation.
So we find the operators by other methods as we will explain in the following.

\par\medskip
This paper is organized as follows. In Section 1, the differential equations treated in this paper $E_3,\ S\!E_3,\ \dots,\ E_6,\ S\!E_6$ are presented. 

In order to define and study these equations, we need various tools of investigation, which we prepare in Section 2. 

In Section 3, the equation $E_3$ and the Dotsenko-Fateev equation $S\!E_3$ are introduced, and the shift operators of $S\!E_3$ are found. 

After reviewing the fundamental properties of $E_6$ in Section 4, a codimension-2 specialization $S\!E_6$ of $E_6$ is studied in Section 5. The study of the equations in this paper stemmed out from this equation. 
 The solutions of $S\!E_6$ admit an integral representation,
 which shows that the equation is obtained by a middle convolution
 of the Dotsenko-Fateev equation $S\!E_3$. 
 Apart from the shift operators coming from those of $E_6$, this equation has shift operators coming from those of  $S\!E_3$. Theoretically one can follow  \cite{Osh} Chapter 11, but 
 to get reasonably short expressions of them, we take another way
 (Proof of Theorem \ref{shiftopSE6}). Thus, we can compute S-values
 (Proposition \ref{S-valuesSE6}) and derive reducibility
 conditions (Theorem \ref{SE6redcond}).

 In Section 6, we study the factorizations of the equation $S\!E_6$ when it
 is reducible, which turns out to be more fruitful than those of $E_6$.
 It is related with the structure of  integral representation of solutions.

 After reviewing the fundamental properties of $E_5$ in Section 7, the codimension-2 specialization $S\!E_5$ of $E_5$ is studied in Section 8. The argument to get the shift operators for $S\!E_5$ is done
parallel to those of $S\!E_6$.
Refer to Theorem \ref{redcondSE5} for reducibility  conditions. 

After reviewing the fundamental properties of $E_4$ in Section 9, the codimension-2 specialization $S\!E_4$ of $E_4$ is studied in Section 10.

\par\smallskip
We acknowledge that we used the software Maple, especially
{\sl DEtools \!}-package for multiplication and division of
differential operators.
Interested readers may refer to our list of data written in text files of Maple format 
\footnote{ http://www.math.kobe-u.ac.jp/OpenXM/Math/HDFdata}
for the differential equations and the shift operators treated in this document.

\newpage

\section{The equations treated in this paper}\label{TheTen}

In this paper 
we treat Fuchsian ordinary differential equations
of order $2,\ \dots,\ 6$, with three singular points $\{0,1,\infty\}$
listed below.
For each equation, the coefficients are polynomials of the independent
variable $x$ and the local exponents at the three singular points.

\[ 
\begin{array}{lccccccccccc}
{\rm name}  &E_6&S\!E_6&E_5&S\!E_5&E_4&S\!E_4&{\rm ST}_4&E_3&S\!E_{3}&_{p}E_{p-1}&E_2 \\[1mm]
{\rm order} &6  &6     &5 &5     &4 &4     &4   &3  &3     &     p   &2\\
{\rm exponents}\quad &9&7&8&6&7&5&6&6&4&2p-1&3\\
{\rm acc.par.}&1&1&1&1&1&1&0&1&1&0&0
\end{array}
\]

\noindent
where 
name=name of the equation, order $=$ order of the equation,
exponents $=$ number of the free exponents,
acc.par. $=$ number of accessory parameters.
The notion of accessory parameter is given in \cite{HOSY1} \S 2.3.
${}_{p}E_{p-1}$ ($p=2, 3, \ldots$)
are the generalized hypergeometric equations,
and $E_2\ (={}_2E_1)$ is the Gauss hypergeometric equation.
$S\!E_n$ is a specialization of $E_n$.
${\rm ST}_4$ is studied in \cite{ST},
$S\!E_3$ is known as the Dotsenko-Fateev equation.

When we are studying a differential operator $E$, we often call $E$ a differential equation and speak about the solutions without assigning an unknown.

The {\it Riemann scheme} of an equation  is the table of local exponents
at the singular points. The {\it Fuchs relation} says that
the sum of all the exponents equals
\begin{equation}\label{Fuchsrelation}
  \frac12n(n-1)(m-2),\end{equation}
where $n$ is the order of the equation,
and $m$ is the number of singular points;
for our equations, $m=3$.
When an equation is determined uniquely by the local exponents,
it is said to be {\it free of accessory parameters} or {\it rigid}.
Though most of the equations we treat in this article have no such property,
for convenience sake, we often make use of the Riemann schemes;
one must be careful that the equation is not determined uniquely
  by the Riemann scheme.

\subsection{Table of the equations and their Riemann schemes}\label{TheTenTable}

For each equation $E$ of order $n$, we give the corresponding operator
with polynomial coefficients in $x$ of the form 
$$x^{n_0}(x-1)^{n_1}\partial^n+\text{\ lower\ order\ terms},\quad \partial=d/dx,$$
for some integers $n_0$ and $n_1$, such that the coefficients have no common factor, and call this operator also $E$. 

We {\it always assume} that local solutions corresponding
to local exponents with integral difference,
such as $0$, $1$, $2$; $s$, $s+1$, $s+2$,
has no logarithmic term, and the other exponents,
such as $e_1$, $e_2$, $\dots$,  are generic.
$R_n$ denotes the Riemann scheme of $E_n$.
\par\vskip 1pc

\noindent {\bf Table of the equations:}
\medskip

\begin{itemize}[leftmargin=*] \setlength{\itemsep}{5pt}
\item $E_6=E_6(e_1, \dots, e_9)=x^3(x-1)^3\partial^6+\cdots
  = T_0+T_1\partial+T_2\partial^2+T_3\partial^3,$
  \[R_6:\left(\begin{array}{lcccccc}
    x=0:&0&1&2&e_1&e_2&e_3\\
    x=1:&0&1&2&e_4&e_5&e_6\\
    x=\infty:&s&s+1&s+2&e_7&e_8&e_9\end{array}\right),
    \quad s=(6-e_1-\cdots-e_9)/3,\\
    \]
\[\def\arraystretch{1.1}
\begin{array}{lcl}
 T_0&=&(\theta +s+2)(\theta +s+1)(\theta +s)B_0,\quad B_0=(\theta +e_7)(\theta +e_8)(\theta +e_9),\quad \theta =x\partial,\\
 T_1&=&(\theta +s+2)(\theta +s+1)B_1,\quad B_1=T_{13}\theta ^3+T_{12}\theta ^2+T_{11}\theta +T_{10},\\
 T_2&=&(\theta +s+2)B_2,\quad B_2=T_{23}\theta ^3+T_{22}\theta ^2+T_{21}\theta +T_{20},\\
 T_3&=&-(\theta +3-e_1)(\theta +3-e_2)(\theta +3-e_3),
 \end{array}
 \] %
 where $T_{ij}$ are polynomials in $e_1$, $\dots$, $e_9$, see \S 3.
 The adjoint (see \S \ref{adjsec}) is given by
changing the parameters as
$$e_i\to 2-e_i\ (i=1,\dots,6),\quad e_j\to 1-e_j\ (j=7,8,9),\quad s\to -1-s.$$
Namely, the adjoint of $E_6(e_1, \dots, e_9)$
   is $E_6(2-e_1, 2-e_2, \dots, 2-e_6, 1-e_7, \dots, 1-e_9)$.
   
\item $S\!E_6$ is a specialization of $E_6$ characterized
  by the system of two equations:
 $$e_1-2e_2+e_3=e_4-2e_5+e_6=e_7-2e_8+e_9.$$
 It is often parameterized by $(a,b,c,g,p,q,r)$ as
 \addtolength{\arraycolsep}{-1pt}  
 \[\def\arraystretch{1.0}
 \begin{array}{lll}
e_1=p+r+1, &e_2=a+c+p+r+2, &e_3=2a+2c+g+ p+r+3,\\
e_4=q+r+1, &e_5=b+c+q+r+2, &e_6=2b+2c+g+ q+r+3,\\
e_7=-2c-pqr-1, &e_8=-\delta-c-pqrg-2, &e_9=-2\delta-pqrg-3,
 \end{array}
 \]
 \addtolength{\arraycolsep}{1pt}
and denoted as $S\!E_6(a,b,c,g,p,q,r)$,
 where $pqr=p+q+r$, $pqrg=pqr+g$, $\delta=a+b+c$. The
   parameter $s$ in $E_6$ turns out to be $-r$. The adjoint is given by
changing the parameters as
$$a\to -a-1,\ b\to -b-1,\ c\to -c-1,\ g\to-g,\ p\to -p-1,\ q\to -q-1,\ r\to -r+1.$$

\end{itemize}

\begin{itemize}[leftmargin=*]\setlength{\itemsep}{5pt}
\item $E_5=E_5(e_1,\dots,e_8)=x^3(x-1)^3\partial^5+\cdots=x\overline T_0+\overline T_1+\overline T_2\partial+\overline T_3\partial^2$,
\[R_5: \left(\begin{array}{lccccc}
  x=0&  0&1&e_1-1&e_2-1&e_3-1\\
  x=1&  0&1&e_4-1&e_5-1&e_6-1\\ 
  x=\infty&  1+s&2+s&3+s&e_7+1&e_8+1\end{array}\right),
    \qquad s=(6-e_1-\cdots-e_8)/3,
    \]
\[  \begin{array}{rcl}
   \overline T_0&=&(\theta +s+1)(\theta +s+2)(\theta +s+3)(\theta +e_7+1)(\theta +e_8+1), \\
   \overline T_1&=&(\theta +s+1)(\theta +s+2)B_{51},\quad B_{51}=B_1(e_9=0),\\
   \overline T_2&=&(\theta +s+2)B_{52},\quad B_{52}=B_2(e_9=0), \\
   \overline T_3&=&-(\theta +3-e_1)(\theta +3-e_2))(\theta +3-e_3).
\end{array} \]
This is obtained from $E_6$ by putting $e_9=0$,
and dividing from the right by $\partial$. The adjoint is given by
$$e_i-1\to 2-e_i \ (i=1,\dots,6),\ e_j+1\to 1-e_j\  (j=7,8), \ 1+s\to -s-1.$$

\item $S\!E_5$ is a specialization of $E_5$ characterized by the system
  of two equations:
$$e_1-2e_2+e_3=e_4-2e_5+e_6=e_7-2e_8.$$
It is often parameterized by $(a,b,c,g,p,q)$ as
  \[\def\arraystretch{1.0}
  \begin{array}{lll}
  e_1 = -2\delta - g - q - 2,
  &e_2 = -\delta-b - g - q - 1,
  &e_3 = -2b - q,\\
  e_4 = -2\delta - g - p - 2,
  &e_5 = -\delta-c - g - p - 1,
  &e_6 = -2a - p,\\
 &e_7 = 2a + 2b + g + 2,
  &e_8 = a + b + 1,\end{array}
  \]
  where $\delta=a+b+c$.  The adjoint is given as $S\!E_6$:
  $$a\to -a-1,\ b\to -b-1,\ c\to -c-1,\ g\to-g,\ p\to -p-1,\ q\to -q-1.$$
  
\item $E_4=E_4(e_1,\dots,e_7)=x^2(x-1)^2\partial^4+\cdots=\mathcal T_0+\mathcal T_1\partial+\mathcal T_2\partial^2$,

\[R_4:\left(\begin{array}{llcll}x=0:&0&1&e_1&e_2\\ x=1:&0&1&e_3&e_4\\ x=\infty:&e_8&e_5&e_6&e_7\end{array}\right),\quad e_1+\cdots+e_7+e_8+2=6,\]
 \[\def\arraystretch{1.1} \begin{array}{rcl}
    \mathcal T_0&=&(\theta +e_5)(\theta +e_6)(\theta +e_7)(\theta +e_8),\\
    \mathcal T_1&=&-2\theta ^3+\mathcal T_{12}\theta ^2+\mathcal T_{11}\theta +\mathcal T_{10},\\
    \mathcal T_{12}&=&e_1+e_2-e_5-e_6-e_7-e_8-5,\\
    \mathcal T_{11}&=&3(e_1+e_2)-e_1e_2+e_3e_4-e_5(e_6+e_7+e_8)-e_6e_7-e_6e_8-e_7e_8-8,\\
    \mathcal T_2&=&(\theta -e_1+2)(\theta -e_2+2),
 \end{array}
 \]
 where $\mathcal T_{10}$ is the accessory parameter,
 which is a polynomial in $e_1,\dots,e_7$ of degree 3, see \S \ref{E4ap1}.
The adjoint is given by
$$e_i\to1-e_i\ (i=1,\dots,8),$$
\item $S\!E_4$ is a specialization of $E_4$ characterized
  by the system of two equations:
  $$e_1-2e_2+1=e_3-2e_4+1=e_5-2e_6+e_7+e_8.$$
  We parameterize the 5 $(=7-2)$ free parameters by $a,b,c,g,u$ as:
$$\begin{array}{lll}e_1=2 a+2 c+g-u+2,&e_2=a+c-u+1,\\
e_3=2 b+2 c+g-u+2,\quad&e_4=b+c-u+1,\\
e_5=u+1,&e_6=u-a-b-2 c-g-1,\\e_7=u-2 c,&
e_8=u-2-g-2 c-2 b-2 a.\end{array}$$
The adjoint is given as $S\!E_6$:
$$a\to -a-1,\ b\to -b-1,\ c\to -c-1,\ g\to-g,\ u\to -u-1.$$

\item ${\rm ST}_4={\rm ST}_4(e_1,\dots,e_6)=x^2(x-1)^2\partial^4+\cdots=V_0+V_1\partial+V_2\partial^2,$ 
  \[\left(\begin{array}{llcll}x=0:&0&1&e_1&e_2\\ x=1:&0&1&e_3&e_4\\ x=\infty:&s&s+1&e_5&e_6\end{array}\right),\quad e_1+\cdots+e_6+2s+3=6,\]
\[\def\arraystretch{1.1}
\begin{array}{rcl}
 V_0 &=& (\theta +s+1)(\theta +s)(\theta +e_5)(\theta +e_6),\\
 V_1 &=& (\theta +s+1)\{-2\theta ^2+(e_1+e_2-e_5-e_6-4)\theta +\frac{1}{4}((e_6-e_5)^2 \\
  &&\qquad -(e_3-e_4)^2+(e_1-e_2)^2+2(e_1+e_2-3)(e_5+e_6+1)-1)\},\\
 V_2 &=& (\theta +2-e_1)(\theta +2-e_2).
\end{array}\]
This equation is rigid (free of accessory parameter),
and appears in Sasai-Tsuchiya \cite{ST}
studying an Okubo type equation of rank 4.       
The adjoint is given by
$$e_i\to -e_i+1\ (i=1.\dots,6),\quad s\to-s,$$
\item ${}_4E_3={}_4E_3(a_0,a_1,a_2,a_3;b_1,b_2,b_3)=x^3(x-1)\partial^4+\cdots$

  $= (\theta +a_0)\cdots(\theta +a_3)-(\theta +b_1)(\theta +b_2)(\theta +b_3)\partial$

  \qquad (the generalized hypergeometric equation),
\[ {}_4R_3:\left(\begin{array}{lcccc}
   x=0:&0&1-b_1&1-b_2&1-b_3\\
   x=1:&0&1   &2    &b_1+b_2+b_3-a_0-\cdots-a_3\\
   x=\infty:&a_0&a_1&a_2&a_3\end{array}\right).\]
This equation is rigid. The adjoint is given by 
$$a_i\to 1-a_i,\quad b_j\to 2-b_j.$$

\item $E_3=E_3(e_1,\dots,e_6)=x^2(x-1)^2\partial^3+\cdots=xS_n+S_0+S_1\partial,$

  \[R_3:\left(\begin{array}{llll}x=0:&0&e_1&e_2\\ x=1:&0&e_3&e_4\\ x=\infty:&e_7&e_5&e_6\end{array}\right),\quad e_1+\cdots+e_6+e_7=3,\]
\[\def\arraystretch{1.1}\setlength\arraycolsep{2pt} \begin{array}{rcl}
    S_n&=&(\theta +e_5)(\theta +e_6)(\theta +e_7),\\
    S_0&=& -2\theta ^3+(2e_1+2e_2+e_3+e_4-3)\theta ^2 \\
    &&\qquad +(-e_1e_2+(e_3-1)(e_4-1)-e_5e_6-(e_5+e_6)e_7)\theta +a_{00},\\
    S_1&=&(\theta -e_1+1)(\theta -e_2+1),
  \end{array}
  \]
  where $a_{00}$ is the accessory parameter,
  which is a polynomial in $e_1,\dots,e_7$ of degree 3, defined in Definition \ref{defA00}.
The adjoint is given by
$$e_i\to -e_i\ (i=1,\dots,4),\quad\ e_j\to2-e_j\ (j=5,6,7).$$
\item $S\!E_3$ (the Dotsenko-Fateev equation) is a specialization of $E_3$
  characterized by the system of two equations:
  $$2e_1-e_2=2e_3-e_4=-e_5+2e_6-e_7.$$
It is parameterized by $(a,b,c,g)$ as:
   $$\begin{array}{lll}   &e_1=a+c+1, &e_2=2e_1+g,\\
   &e_3=b+c+1, &e_4=2e_3+g,\\
          e_5=-2c,&     e_6=-(a+b+2c+g+1), &e_7=2e_6+g-e_5.\\
 \end{array}$$
The accessory parameter $a_{00}$ reduces to
  $$a_{00}=c(2a+2c+1+g)(2a+2b+2c+2+g).$$
\end{itemize}
The adjoint is given as: 
$$a \to -a - 1,\ b \to -b - 1,\ c \to -c - 1,\ g \to -g.$$

\subsection{Relation among the equations}\label{TheTenRel}

Under codimension-one conditions on the local exponents (a codimension-one condition means a
linear condition on the local exponents), the equations $E_6$, $\dots$, $S\!E_3$ factorize as follows:
\[\def\arraystretch{1.2}
\begin{array}{clll}
  {\rm equation}&&{\rm types\ of\ factorization}&{\rm reference}\\
  E_6 &\to& \{E_1,E_5\},\quad \{E_1,E_1,E_1,E_3\}&\S\ \ref{E6Red}\\
  S\!E_6 &\to& \{E_1, S\!E_5\},\quad  \{E_1,E_1,E_1,S\!E_3\},\quad  \{E_2,{\rm ST}_4\}&\S\ \ref{RedSE6fact}\\
  E_5 &\to& \{E_1,E_4\},\quad  \{E_1,E_1,E_3\}&\S\ \ref{E5Fact}\\
  S\!E_5 &\to& \{E_1,{\rm ST}_4\},\  \{E_1,S\!E_4\},\ \{E_1,\ {}_4E_3\},\\
  &&\{E_1,E_1,S\!E_3\}, \  \{E_2,\ {}_3E_2\}&\S\ \ref{SE5Red}\\
  E_4 &\to& \{E_1,E_3\}\quad &\S\ \ref{E4ap1Red}\\
  S\!E_4&\to& \{E_1,S\!E_3\}&\S\ \ref{newSE4}\\
  S\!E_3 &\to& \{E_1,E_2\}&\S\ \ref{red_cases}\\
\end{array}\]
where $E_1$ stands symbolically for an operator of order 1.
In the above table, $E_6 \to \{E_1,E_5\}$, for example, reads:
$$\text{under codimension-one conditions, $E_6$ factors as}
\ F_1\circ F_5 {\rm \ and \ }F'_5\circ F'_1,
\footnote{Composition of two differential operators $P$ and $Q$ is
  denoted by $P\circ Q$; we often  write it as $PQ$.}$$
where $F_1$ (and $F'_1)$ are operators of order 1, and $F_5$ (and $F'_5$) are essentially  equal (see Definition \ref{essentially}) to $E_5$.
For example, we have
\[ \def\arraystretch{1.1} \begin{array}{lcl}
  E_6(e_9=1)&=&\partial\circ E_5(e_1+1,\dots,e_6+1,e_7-1,e_8-1),\\
  E_6(e_9=0)&=&E_5(e_1,\dots,e_6,e_7,e_8)\circ \partial.
\end{array}\]
For $E_3$, no codimension-one condition to be reducible is found
  to the authors.


\section{Generalities}\label{Gen}

We prepare some generalities that we need in the following sections.
\subsection{$(x,\partial)$-form and $(x,\theta,\partial)$-form}\label{Genxz}
Given a differential operator $P=a_n(x)\partial^n+\cdots\in\mathbb{C}[x][\partial]$ in $(x,\partial)$-form. Rewrite each term as
  $$x^i\partial^j=x^{i-j}(x^j\partial^j),\quad i\ge j,\qquad
x^i\partial^j=(x^i\partial^i)\partial^{j-i},\quad i\le j,
    $$
and substitute 
  $$x^i\partial^i=\theta(\theta-1)\cdots(\theta-i+1),\quad i\ge1,\quad \theta=x\partial.$$
Then $P$  can be written uniquely in $(x,\theta,\partial)$-form:
  $$P=x^qP_{-q}+\cdots+xP_{-1}+P_0+P_1\partial+\cdots+P_p\partial^p,\ \ p\le n,\ q\ge0,$$
  where $P_i$ is a polynomial in $\theta$.
  \subsection{Adjoint equation} \label{adjsec}

  The adjoint $P^*$ of $P=\sum a_j(x)\partial^i$ is defined as $P^*=\sum(-)^j\partial^j\circ a_j(x).$
An equation $E=E(e)$ with a set of parameters $e$ is said to
  have adjoint symmetry if $E(e)^*=E({\rm adj}(e))$
  for a linear transformation ${\rm adj}$ on the set $e$.
  Every equation $E$ appeared in \S1 has adjoint symmetry;
  for example in the case of $S\!E_3$, ${\rm adj}(a,b,c,g)
  =(-a-1, -b-1, -c-1, -g)$. We refer to \cite[\S 2]{HOSY1}.
  
\subsection{Addition and middle convolution }\label{GenAdd}

We recall the operations called {\it addition} and
  {\it middle convolution} referring to \cite{Hara, Osh}.

\begin{dfn}\label{GenAddRL}
  For a function $u(x)$, {\it Riemann-Liouville transformation} of $u$
  with parameter $\mu$ is defined as the function in $x$:
 $$I^{\mu}_{\gamma}(u)(x)=\frac1{\Gamma(\mu)}\int_\gamma u(t)(x-t)^{\mu-1}\, dt,$$
 where $\gamma$ is a cycle.
\end{dfn}

\begin{dfn}\label{GenAddAdd}
 For a linear differential operator $P$ in $x$ and a function $f$ in $x$,
the operator called  the {\it addition} by $f$ is defined as
 $${\rm Ad}(f)P:=f\circ P\circ f^{-1}.$$
\end{dfn}

\begin{dfn}\label{defmc}\label{GenAddMid}
 If $u$ is a solution of a linear differential equation $P$, then
the Riemann-Liouville transformation $I^{\mu}_{\gamma}(u)$ turns out to be a solution of
the differential equation $mc_{\mu}(P)$, called the {\it middle convolution of $P$ with parameter $\mu$}.
\end{dfn}

 The equation $mc_{\mu}(P)$ is obtained as follows:
 Multiply $P$ by $\partial^k$ with sufficiently large positive integer $k$
 from the left so that $\partial^kP$ can be written
 as a linear combination of $z^i\circ\partial^j$, where $z=x\partial$.
Then operate ${\rm Ad}(\partial^{-\mu})$, and divide the result by $\partial$
from the left as many times as possible, say $\ell$.
Namely we have
$$
 {\rm Ad}(\partial^{-\mu})(\partial^kP)=\partial^\ell\circ mc_{\mu}(P).
$$
 (The result is independent of $k$.) In computing the left hand side,
the formulas
$$
 {\rm Ad}(\partial^{-\mu})\partial=\partial,\quad
 {\rm Ad}(\partial^{-\mu})z=z-\mu
$$
are useful. The middle convolution has the additive property
$$
 mc_0={\rm id.},\quad
 mc_{\mu}\circ mc_{\mu'}=mc_{\mu+\mu'},
$$
and so $mc_{\mu}$ is invertible:
$$
(mc_{\mu})^{-1}=mc_{-\mu}.
$$

\noindent
For an operator $P$ with singular points $0$, $1$, $\infty$, set
$$\begin{array}{lcl}
  d&=& ({\rm mult\ of\ }0{\ \rm in\ the\ exponents\ at\ }x=0)\\
  && +({\rm mult\ of\ }0{\ \rm in\ the\ exponents\ at\ }x=1)\\
  && +({\rm mult\ of\ }\mu{\ \rm in\ the\ exponents\ at\ }x=\infty)-{\rm order}(P),\end{array}
$$     
where multiplicity (abbreviated as mult) is counted mod 1.
Here and in the following,
${\rm order}(P)$ denotes the order of the operator $P$. 
Then we have
  $${\rm order}(mc_{\mu}(P))={\rm order}(P)-d.$$
For example, the exponents at infinity of $E_6$ are
$\{s,\, s+1,\,  s+2,\, e_7,\, e_8,\, e_9\}$,
so the parameter $\mu$ can be equal ( mod 1) to none of them,
or to $e_j\,(j=7,\, 8,\, 9)$, or equal to $s$. Thus
  \[{\rm (mult\ of\ } \mu\ {\rm in\ the\ exponents\ at\ } x=\infty)=
  0\ {\rm or}\ 1\ {\rm or}\ 3
  \]
and
  $$d=3+3+(0{\ \rm or\ }1{\ \rm or\ }3)-6=0{\ \rm or\ }1{\ \rm or\ }3.$$
Similarly, we have

\begin{itemize}[leftmargin=36pt]
\item if $P=E_3$, then $d=1+1+(0{\ \rm or\ 1})-3=-1{\ \rm or\ }0$.
  If we perform an addition first to change the local exponent 0 of $x=0$
  or/and $x=1$ non-zero, then $d=-3,-2{\ \rm or\ }-1$.
  See \cite{HOSY1} \S 3.0.1.

\item if $P=E_2$, then $d=1+1+(0{\ \rm or\ }1)-2=0{\ \rm or\ }1$.
  Thus any middle convolution of $E_2$ is again a Gauss operator
  or a 1st order operator. But if we perform an addition first
  to change the local exponent 0 of $x=0$ or/and $x=1$ non-zero,
  then $d=-2,-1{\ \rm or\ }0$. See\cite{HOSY1} \S 3.1.
\end{itemize}

\noindent
It is known  that middle convolutions do not change the number of accessory parameters. 

\subsection{Shift operators, shift relations and S-values}\label{GenShift}

We recall the definition of shift operators, shift relations,
  and $S$-values from \cite[\S 4]{HOSY1}.

Let $E(a)\in D=\mathbb{C}(x)[\partial]$ be an operator of order $n$
with a parameter $a$. Assume that it is irreducible for generic $a$.
Let $S$ denote the set of singularities of $E(a)$ in this subsection.

\subsubsection{Shift operators and shift relations}\label{GenShiftShift}
\begin{dfn}\label{DefShift}In general,
  let  ${\rm Sol}(E(a))$ be the solution space of $E(a)$.
  A non-zero operator $P\in D$ of order lower than $n$
  sending ${\rm Sol}(E(a))$ to ${\rm Sol}(E(a+1))$, is called
  a {\it shift operator} for the parameter change $a\to a+1$,
  and denoted by $P_{a+}$. A shift operator for $a\to a-1$,
  is denoted by $P=P_{a-}$.
\end{dfn}
Since we assume $P\in D$, we have

\begin{lemma}\label{pi1morphsim}The shift operators
  are $\pi_1(\mathbb{C}-S)$-morphism, {\it i.e.},
  they commute with the $\pi_1(\mathbb{C}-S)$-action.
\end{lemma}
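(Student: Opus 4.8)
The plan is to unwind the definition of the $\pi_1(\mathbb C-S)$-action on the solution spaces and to observe that it is automatically compatible with any differential operator whose coefficients are single-valued on $\mathbb C-S$. Fix a base point $x_0\in\mathbb C-S$ and identify ${\rm Sol}(E(a))$ with the space of germs of holomorphic solutions of $E(a)$ at $x_0$; for a loop $\gamma\in\pi_1(\mathbb C-S,x_0)$ the action $\gamma_*$ on this space is analytic continuation of germs along $\gamma$. Given a shift operator $P=P_{a+}$, so that $Pu\in{\rm Sol}(E(a+1))$ whenever $u\in{\rm Sol}(E(a))$, the statement to prove is simply $\gamma_*(Pu)=P(\gamma_*u)$ for every $\gamma$ and every germ $u$.

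First I would record two elementary properties of analytic continuation along a path: it commutes with $\partial=d/dx$, because differentiation is local and the derivative of a continued germ is the continuation of its derivative; and it commutes with multiplication by any function holomorphic on a neighbourhood of the image of $\gamma$, since such a factor returns to itself, being single-valued there. Writing $P=\sum_{k=0}^{n-1}r_k(x)\partial^k$ with $r_k\in\mathbb C(x)$ — which is legitimate precisely because $P\in D$ — and taking $S$ to contain all poles of the $r_k$ together with the singular points of $E(a)$ and $E(a+1)$, so that each $r_k$ is holomorphic and single-valued on $\mathbb C-S$, the two properties combine termwise to give
$$\gamma_*(Pu)=\sum_k\gamma_*\bigl(r_k\,\partial^k u\bigr)=\sum_k r_k\,\partial^k(\gamma_*u)=P(\gamma_*u),$$
which is the claimed commutation. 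The same computation applies verbatim to $P_{a-}$, so both kinds of shift operators intertwine the monodromy representations.

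There is essentially no obstacle here beyond the bookkeeping about the finite set $S$: one must choose $S$ large enough to swallow the poles of the coefficients of $P$ in addition to the singular points of the equations, so that $P$ genuinely acts as an operator holomorphic on $\mathbb C-S$. In the concrete situations of this paper the shift operators are rational in $x$ with poles only at $0$ and $1$ (the singular points already present), so this is automatic.
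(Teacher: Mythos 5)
Your argument is correct and is exactly the content behind the paper's one-line justification ("Since we assume $P\in D$"): rational, hence single-valued, coefficients make analytic continuation commute with the operator. The only cosmetic point is that enlarging $S$ to absorb extra poles of the coefficients is harmless because both monodromy actions factor through $\pi_1(\mathbb{C}-S)$, the solutions of $E(a)$ and $E(a+1)$ being singular only at $S$; with that remark your proof matches the paper's intended argument.
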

Suppose a shift operator $P_{a+}$ exists.
Since $E(a+1)\circ P_{a+}(a)$ is divisible from right by $E(a)$,
there is an operator $Q_{a+}(a)$ satisfying the {\it shift relation}:
  $$(EPQE):\quad E(a+1)\circ P_{a+}(a)=Q_{a+}(a)\circ E(a).$$
Conversely, if there is a pair of non-zero operators $(P_{a+}(a), Q_{a+}(a))$ of order smaller than $n$ satisfying this relation, then $P_{a+}$ is a shift operator for $a \to a + 1.$ We often call also
the pair $(P_{a+}(a), Q_{a+}(a))$ the shift operator for $a \to a + 1.$
\begin{prp}\label{expofQ} If an operator $E(e)$ with the adjoint symmetry
  $E(e)^*=\\E (adj(e))$, where $e$ is the set of parameters,
  admits a shift relation $E(\sigma(e))\circ P=Q\circ E(e)$ for a shift $\sigma$, then 
  $$Q=(-)^\nu P(adj\circ \sigma(e))^*,\quad \nu={\rm order}(P).$$
\end{prp}

By Schur's lemma, we have
\begin{prp}\label{inverseofshiftop}If $E(a)$ is irreducible and $P_{a+}$
  $($resp. $P_{a-}\,)$ exists then the inverse operator $P_{a-}$
  $($resp. $P_{a+}\,)$ exists. It is unique up to multiplicative constant (independent of $(x,\partial)$). 
\end{prp}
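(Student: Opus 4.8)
The plan is to argue by symmetry that it suffices to treat one direction, say: assuming $E(a)$ is irreducible for the relevant value of $a$ and that a shift operator $P_{a+}(a)$ exists, produce a shift operator $P_{a-}(a+1)$ for the change $a+1 \to a$. First I would invoke the shift relation $(EPQE)$: there is an operator $Q_{a+}(a)\in D$, of order $<n$, with
\[
E(a+1)\circ P_{a+}(a) = Q_{a+}(a)\circ E(a).
\]
The key structural fact I would extract is that, because $E(a+1)$ is also irreducible (being an instance of the same family at a generic parameter, by the standing assumption that $E(a)$ is irreducible for generic $a$), the nonzero operator $P_{a+}(a)$ of order $<n$ cannot be right-divisible by $E(a)$ and cannot be left-divisible by $E(a+1)$; hence in $D=\mathbb{C}(x)[\partial]$, which is a (left and right) principal ideal domain, the operators $P_{a+}(a)$ and $E(a)$ are right-coprime, and $P_{a+}(a)$ and $E(a+1)$ are left-coprime.

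Using left-coprimality of $P_{a+}(a)$ and $E(a+1)$, the Euclidean algorithm in $D$ yields operators $U,V\in D$ with
\[
U\circ E(a+1) + V\circ P_{a+}(a) = 1.
\]
I would then reduce $V$ modulo $E(a)$ on the right, writing $V = W\circ E(a) + R$ with $\operatorname{order}(R) < n$, and set $P_{a-}(a+1) := R$. Multiplying the Bézout identity on the right by a solution $u$ of $E(a+1)$ and using $E(a+1)u=0$ gives $V\circ P_{a+}(a)\,u' = u'$ for the appropriate antiderivative-free bookkeeping; more cleanly, one checks directly that $R$ sends $\operatorname{Sol}(E(a+1))$ into $\operatorname{Sol}(E(a))$: for $u\in\operatorname{Sol}(E(a+1))$ one has $P_{a+}(a)\,(Ru) = (1 - U E(a+1))(\text{something in }\operatorname{Sol}(E(a+1)))$, and composing with the shift relation shows $E(a)\,(Ru)$ maps under $P_{a+}(a)$ to something that must vanish; irreducibility of $E(a)$ forces $Ru\in\operatorname{Sol}(E(a))$ unless $Ru=0$ identically, and the genericity/coprimality ensures $R\ne 0$ as an operator, hence $R$ is the desired shift operator $P_{a-}$ for $a+1\to a$. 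Finally, I would note $Q_{a+}(a)$ can be recovered as the quotient $E(a+1)\circ P_{a+}(a)$ divided from the right by $E(a)$, so the full pair $(P_{a-},Q_{a-})$ exists, and the reverse implication follows by replacing $a$ by $a-1$ throughout.

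The main obstacle I anticipate is the bookkeeping needed to guarantee that the reduced operator $R$ is genuinely nonzero and genuinely maps solutions the correct way, rather than collapsing to the zero map on $\operatorname{Sol}(E(a+1))$; this is where irreducibility of both $E(a)$ and $E(a+1)$ is essential, and where one must be careful that "shift operator" is taken in the operator sense (modulo $E$) and not merely as a map of solution spaces. A secondary point is to confirm that $E(a+1)$ inherits irreducibility: this uses that the family $E(a)$ is irreducible for generic $a$ together with the discreteness of the reducibility locus in $a$ — alternatively, one works over the function field $\mathbb{C}(a)$ from the start, performs the Euclidean algorithm there, and specializes at the end, which sidesteps the issue cleanly at the cost of checking that denominators in $a$ do not vanish at the value in question.
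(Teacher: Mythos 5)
Your raw materials are the right ones, but the assembly as written does not prove the statement (the paper itself only quotes this proposition from \cite{HOSY1}, so I compare with the standard argument). The decisive slip is which B\'ezout identity you use. The identity you write, $U\circ E(a+1)+V\circ P_{a+}(a)=1$, applied to $w\in{\rm Sol}(E(a+1))$ only gives $V(P_{a+}w)=w$, i.e.\ it restates the identity modulo $E(a+1)$; it does not produce an operator sending ${\rm Sol}(E(a+1))$ into ${\rm Sol}(E(a))$. Compounding this, you then reduce $V$ modulo $E(a)$ on the right, but right reduction modulo $E(a)$ only preserves the action on ${\rm Sol}(E(a))$, not on ${\rm Sol}(E(a+1))$, which is where the candidate inverse must act; and the closing inference ``irreducibility of $E(a)$ forces $Ru\in{\rm Sol}(E(a))$ unless $Ru=0$'' is not a valid step: irreducibility constrains monodromy-invariant subspaces of ${\rm Sol}(E(a))$ (equivalently factorizations of $E(a)$), it cannot force an arbitrary function $Ru$ to be a solution. (There is also a left/right coprimality mix-up, but that is terminological.)

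The correct assembly uses the other coprimality, which you do state: since $E(a)$ is irreducible of order $n$ and $P_{a+}(a)\neq 0$ has order $<n$, the greatest common right divisor of $P_{a+}(a)$ and $E(a)$ is $1$, so the Euclidean algorithm gives $U,V\in D$ with $U\circ E(a)+V\circ P_{a+}(a)=1$. Applying this to $u\in{\rm Sol}(E(a))$ yields $V(P_{a+}u)=u$; in particular $P_{a+}$ is injective on ${\rm Sol}(E(a))$, hence by dimension count $P_{a+}({\rm Sol}(E(a)))={\rm Sol}(E(a+1))$, and $V$ restricted to ${\rm Sol}(E(a+1))$ is precisely the inverse isomorphism onto ${\rm Sol}(E(a))$. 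Now reduce $V$ on the right modulo $E(a+1)$ (not $E(a)$): $V=W\circ E(a+1)+R$ with ${\rm order}(R)<n$. Then $R$ acts on ${\rm Sol}(E(a+1))$ exactly as $V$ does, is nonzero because it induces a bijection of $n$-dimensional spaces, and is therefore the desired $P_{a-}$ at parameter $a+1$; the companion $Q_{a-}$ is the right quotient of $E(a)\circ R$ by $E(a+1)$, which exists because $E(a)\circ R$ annihilates ${\rm Sol}(E(a+1))$. Note this argument nowhere needs irreducibility of $E(a+1)$, so your secondary worry about whether $a+1$ remains ``generic'' (which genericity of the family does not give for a fixed $a$) simply does not arise; the statement for $P_{a-}$ follows by the same argument with $a$ and $a-1$ interchanged.
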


\begin{remark}
  When a differential {\it equation} in question is $Eu=0$
($u$ is indeterminate), by multiplying a non-zero polynomial to the {\it operator} $E$, we can assume that $E$ has no poles. However, the coefficients of shift operators may have poles as functions of $x$.
  \end{remark}

\subsubsection{S-values}\label{GenShiftS}

When the shift operators exist for the shifts $a\to a\pm1$,
consider a composition of shift operators:
  $$P_{a+}(a-1)\circ P_{a-}(a):{\rm Sol}(E(a))\to{\rm Sol}(E(a-1))\to{\rm Sol}(E(a)).$$
For a generic $a$, again by Schur's lemma,
there is an operator $R$ such that
  $$P_{a+}(a-1)\circ P_{a-}(a)-R\circ E(a)$$
is a constant (times the identity). 

\begin{dfn}This constant will be called the {\it S-value} for $a$,
  and will be denoted by $Sv_{a-}(a)$.   
\end{dfn}
We often write $Sv_{a-}(a)\equiv P_{a+}(a-1)\circ P_{a-}(a)$;  $\equiv$
means {\it modulo $E(a)$}, and when
we are discussing S-values, we forget the identity map.

\begin{prp}\label{S-valuesprop210} The composition
  $$P_{a-}(a+1)\circ P_{a+}(a):{\rm Sol}(E(a))\to{\rm Sol}(E(a+1))\to{\rm Sol}(E(a))$$
  is a constant modulo $E(a)$. Denote the constant as $Sv_{a+}(a)$. Then
  $$Sv_{a-}(a)=Sv_{a+}(a-1).$$
\end{prp}

\begin{prp}\label{Sred}   
If $Sv_{a+}(a)=0$ $($resp. $Sv_{a-}(a)=0)$, then $E(a)$ and $E(a+1)$ $($resp. $E(a-1))$ are reducible. If  $Sv_{a+}(a)\not=0$ $($resp. $Sv_{a-}(a)\not=0)$, then $P_{a+}$ $($resp. $P_{a-})$ gives an isomorphism: ${\rm Sol}(E(a))\to {\rm Sol}(E(a+1))$ $($resp. ${\rm Sol}(E(a))\to {\rm Sol}(E(a-1)))$ as $\pi_1(\mathbb{C}-\{0,1\})$-modules.
\end{prp}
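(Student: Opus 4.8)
The plan is to exploit the defining relation of the Svalue together with the fact that shift operators have order strictly smaller than $n=\mathrm{order}(E)$. Recall that, by definition, $Sv_{a+}(a)\equiv P_{a-}(a+1)\circ P_{a+}(a)$ modulo $E(a)$, i.e. $P_{a-}(a+1)\circ P_{a+}(a) = R\circ E(a) + Sv_{a+}(a)$ for some $R\in D$. First I would treat the vanishing case. Suppose $Sv_{a+}(a)=0$. Then $P_{a-}(a+1)\circ P_{a+}(a)\equiv 0\pmod{E(a)}$, which means that every solution $u$ of $E(a)$ is annihilated by $P_{a-}(a+1)\circ P_{a+}(a)$. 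If $E(a)$ were irreducible, then $P_{a+}(a)u$ would be a solution of $E(a+1)$ lying in the kernel of $P_{a-}(a+1)$; but $P_{a-}(a+1)$ is a nonzero operator of order $< n$, and by Proposition \ref{uniquenessofshiftop} it is the (unique) shift operator, which by Proposition \ref{inverseofshiftop} — valid only when $E(a+1)$ is irreducible — is invertible. So either $E(a+1)$ is already reducible, or $P_{a-}(a+1)$ is injective on $\mathrm{Sol}(E(a+1))$, forcing $P_{a+}(a)u=0$ for all $u\in\mathrm{Sol}(E(a))$; but $P_{a+}(a)$ is a nonzero operator of order $<n$, so its kernel inside the $n$-dimensional space $\mathrm{Sol}(E(a))$ is a proper nonzero $\pi_1(\mathbb{C}-\{0,1\})$-submodule (nonzero because a nonzero operator of order $<n$ cannot annihilate all of an irreducible $n$-dimensional module), contradicting irreducibility of $E(a)$. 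Hence $E(a)$ is reducible, and by the symmetry of the argument (or by running it with the roles swapped) $E(a+1)$ is reducible too. The case $Sv_{a-}(a)=0$ is identical after replacing $+$ by $-$ throughout and using Proposition \ref{Svalues} to identify $Sv_{a-}(a)$ with $Sv_{a+}(a-1)$.

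For the non-vanishing case, suppose $Sv_{a+}(a)\neq 0$. The relation $P_{a-}(a+1)\circ P_{a+}(a)\equiv Sv_{a+}(a)\pmod{E(a)}$ says precisely that, as a map $\mathrm{Sol}(E(a))\to\mathrm{Sol}(E(a+1))\to\mathrm{Sol}(E(a))$, the composite $P_{a-}(a+1)\circ P_{a+}(a)$ acts as the nonzero scalar $Sv_{a+}(a)$, hence is an isomorphism of $\mathrm{Sol}(E(a))$. Therefore $P_{a+}(a)\colon\mathrm{Sol}(E(a))\to\mathrm{Sol}(E(a+1))$ is injective; since both spaces have dimension $n$, it is a bijection, and by Lemma \ref{pi1morphsim} it intertwines the $\pi_1(\mathbb{C}-\{0,1\})$-actions, so it is an isomorphism of $\pi_1(\mathbb{C}-\{0,1\})$-modules. (In particular $P_{a-}(a+1)$ is then the inverse up to the scalar $Sv_{a+}(a)$.) The statement for $P_{a-}$ follows by the same reasoning applied to $Sv_{a-}(a)\equiv P_{a+}(a-1)\circ P_{a-}(a)$.

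The main obstacle is the vanishing case: one must be careful that the chain of implications does not implicitly assume irreducibility of both $E(a)$ and $E(a+1)$ at the same time, since Proposition \ref{inverseofshiftop} requires irreducibility as a hypothesis rather than supplying it. The clean way around this is to argue by contradiction assuming $E(a)$ irreducible and then split on whether $E(a+1)$ is irreducible: if it is, invertibility of $P_{a-}(a+1)$ gives the contradiction directly; if it is not, we have already produced the desired conclusion that $E(a+1)$ is reducible, and then a short separate argument (factor $E(a+1)=A\circ B$ and pull the factorization back through the shift relation $(EPQE)$) shows $E(a)$ is reducible as well, contradicting the assumption. Everything else — the scalar-multiple bookkeeping, the order count, the appeal to Schur's lemma — is routine given the results already established in \S\ref{GenShift}.
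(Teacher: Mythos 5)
The paper itself states Proposition \ref{Sred} without proof (it is reviewed from \cite{HOSY1} \S 4), so there is no in-paper argument to compare against; judged on its own, your reconstruction is essentially the expected standard argument and is sound. The non-vanishing case is complete as written: the composite acts on ${\rm Sol}(E(a))$ by the nonzero scalar $Sv_{a+}(a)$, so $P_{a+}(a)$ is injective, hence bijective between $n$-dimensional solution spaces, and it is a morphism of $\pi_1$-modules by Lemma \ref{pi1morphsim}.

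Two points in the vanishing case should be repaired, though neither is fatal. First, the injectivity of $P_{a-}(a+1)$ on ${\rm Sol}(E(a+1))$ should not be attributed to Proposition \ref{inverseofshiftop}: that proposition only asserts the \emph{existence} of the reverse shift operator, and ``invertibility'' in the sense you need---that the composition acts by a nonzero scalar---is exactly the issue under discussion, so the citation flirts with circularity. The correct (and easy) justification is the one you already use for $P_{a+}$: $\ker P_{a-}(a+1)\cap{\rm Sol}(E(a+1))$ is a monodromy-invariant subspace of dimension at most ${\rm order}(P_{a-})<n$, hence zero when $E(a+1)$ is irreducible, by Proposition \ref{invsubsp}. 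Second, once you have $P_{a+}(a)u=0$ for all $u\in{\rm Sol}(E(a))$, the contradiction is simply the order count (a nonzero operator of order $<n$ cannot annihilate an $n$-dimensional space of functions); the ``proper nonzero submodule'' phrasing there is garbled. In fact your case split and the detour through Theorem \ref{red_atoap} can be avoided altogether: put $K=\ker P_{a+}(a)\cap{\rm Sol}(E(a))$ and $W=P_{a+}(a)({\rm Sol}(E(a)))$. By the order count $K$ is proper, so $W\neq 0$; since $Sv_{a+}(a)=0$, $W\subseteq\ker P_{a-}(a+1)$, which has dimension $<n$, so $W$ is a proper nonzero invariant subspace and $E(a+1)$ is reducible; and if $K$ were zero then $W={\rm Sol}(E(a+1))$ would have dimension $n$, a contradiction, so $K\neq 0$ and $E(a)$ is reducible. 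Your use of Proposition \ref{Svalues} to reduce the $a\to a-1$ statements to the $a\to a+1$ ones is fine.
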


\begin{thm}\label{red_atoap}
Assume $E$ and $E'$ are connected by the shift relation $E'P=QE$.
  If $E$ is reducible, so is $E'$. If $E'$ is reducible, so is $E$.
\end{thm}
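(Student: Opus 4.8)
The plan is to unwind the definitions and work entirely inside the ring $D=\mathbb{C}(x)[\partial]$, treating reducibility of an operator $E$ as the existence of a nontrivial right factorization $E=A\circ B$ with $0<\operatorname{order}(B)<\operatorname{order}(E)$ (equivalently, $E$ generates a nonmaximal left ideal). Note first that the statement as displayed contains a typo: the intended assertion is surely \emph{if $E$ is reducible, so is $E'$, and if $E'$ is reducible, so is $E$}, under the shift relation $E'\circ P=Q\circ E$ with $P,Q$ nonzero of order $<n$. So there are two implications to prove, and they are nearly symmetric once one observes that the relation $E'P=QE$ can be read in both directions.

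First I would treat the direction ``$E$ reducible $\Rightarrow$ $E'$ reducible.'' Suppose $E=A\circ B$ nontrivially. The shift relation gives $E'\circ P=Q\circ A\circ B$, so $E'$ kills every solution of $B$ after applying $P$; more usefully, I would argue on the level of right ideals: the right ideal $DE+DP$ (generated by $E$ and $P$ in $D$) — or rather, I should work with the solution-space picture, since $D$ is not commutative but it is a (left and right) principal ideal domain, so gcd's and lcm's of operators make sense. The clean argument: let $B$ be the nontrivial right factor of $E$, and form $G=\gcd_{\mathrm{right}}(E,P)$ — no, the efficient route is via $\mathrm{lcm}$. Consider $M=\operatorname{lcm}_{\mathrm{right}}(E,P)$... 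Actually the simplest correct argument uses solution spaces over a common differential field extension. Since $E'P=QE$, every solution $u$ of $E$ satisfies $E'(Pu)=Q(Eu)=0$, so $P$ maps $\operatorname{Sol}(E)$ into $\operatorname{Sol}(E')$. If $E$ is reducible, $\operatorname{Sol}(E)$ has a nonzero proper $\pi_1$-submodule $W$ (the solutions of the right factor $B$, extended). Then $P(W)$ is a $\pi_1$-submodule of $\operatorname{Sol}(E')$ by Lemma~\ref{pi1morphsim}. If $P(W)$ is proper and nonzero we are done. The two things to rule out are $P(W)=0$ and $P(W)=\operatorname{Sol}(E')$.

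The main obstacle is precisely handling those degenerate cases, and this is where $\operatorname{order}(P)<n$ is used. If $P(W)=0$ for \emph{all} of $\operatorname{Sol}(E)$ then $P$ would kill an $n$-dimensional space, forcing $\operatorname{order}(P)\ge n$, a contradiction; but if $P$ kills only the proper subspace $W$, then $\ker P\cap\operatorname{Sol}(E)\supseteq W$ is a nonzero proper $\pi_1$-submodule of $\operatorname{Sol}(E)$, which already exhibits reducibility of $E$ — and then I would instead factor through $\ker P$: the right gcd $\gcd_{\mathrm{right}}(E,P)$ is a nontrivial right factor of $E'$... hmm, of $E$. To get reducibility of $E'$ in that sub-case, note $E'\circ P = Q\circ E$ shows $DE'\supseteq$... the cleanest fix is: $P$ and $E$ have a nontrivial right common factor $R=\gcd_{\mathrm{right}}(P,E)$ (nontrivial because $P(W)=0$ means $W\subseteq\operatorname{Sol}(R)$, so $\operatorname{order}(R)\ge\dim W>0$, and $R$ right-divides $E$ so $\operatorname{order}(R)<n$); write $P=P_1R$, and then $E'P_1R=QE$; since $R$ right-divides $E$, write $E=E_1R$, and after right-cancelling $R$ (valid in $D$, an Ore domain, since $R\neq0$) we get $E'P_1=QE_1$ with $\operatorname{order}(E_1)<\operatorname{order}(E)$ — iterate. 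The symmetric case $P(W)=\operatorname{Sol}(E')$ is dual and is handled by the left-hand analogue (using that $Q$ has order $<n$): read $E'P=QE$ as saying $Q$ maps $\operatorname{Sol}({}^{*}\!E)$-type spaces, or directly pass to formal adjoints, where $E'P=QE$ becomes $P^{*}E'^{*}=E^{*}Q^{*}$ and reducibility is adjoint-invariant. For the converse direction ``$E'$ reducible $\Rightarrow$ $E$ reducible,'' I would run the identical argument with the roles of $(E,P)$ and $(E',Q)$ swapped, using the relation in the form $E'P=QE$ read right-to-left — i.e., apply the already-proved direction to the adjoint relation $E^{*}Q^{*}=P^{*}E'^{*}$. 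I expect the bookkeeping in the degenerate sub-cases (the repeated right-cancellation in the Ore domain $D$, and keeping track that orders stay strictly below $n$) to be the only real subtlety; the generic case is immediate from Lemma~\ref{pi1morphsim}.
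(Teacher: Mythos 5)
Your reading of the statement (the intended assertion is ``if $E$ is reducible, so is $E'$, and if $E'$ is reducible, so is $E$'') is correct, and your main mechanism --- $P$ maps $\mathrm{Sol}(E)$ into $\mathrm{Sol}(E')$, commutes with monodromy (Lemma \ref{pi1morphsim}), and reducibility is equivalent to the existence of a nonzero proper invariant subspace (Proposition \ref{invsubsp}) --- is exactly the mechanism this theorem rests on; the paper itself quotes the theorem from Part I without reproving it, and its Propositions \ref{Sred} and \ref{FactorType} are phrased in the same terms. But as written your argument has a genuine gap, precisely in the degenerate cases you flag. First, the subcase $P(W)=\mathrm{Sol}(E')$ never occurs: $\dim P(W)\le\dim W<n=\dim\mathrm{Sol}(E')$, so the adjoint detour you invoke for it is unnecessary. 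Second, and more seriously, in the subcase $P(W)=0$ your fix ends with ``iterate'', which is not a conclusion: the theorem cannot be re-applied to the cancelled relation $E'P_1=QE_1$, since its hypothesis (reducibility of $E_1$) is not available, and no termination argument is given. The repair is one observation away: $E_1$ cannot right-divide $P_1$ (otherwise $P=P_1R=P_2E_1R=P_2E$, so the nonzero operator $P$ of order $<n$ would be right-divisible by $E$); hence $P_1(\mathrm{Sol}(E_1))$ is a nonzero invariant subspace of $\mathrm{Sol}(E')$ of dimension at most $\operatorname{order}(E_1)<n$, and $E'$ is reducible --- no iteration needed. (One should also remark that the possible extra singular points of $E_1,P_1$ are harmless: the monodromy of $\mathrm{Sol}(E')$ factors through $\pi_1(\mathbb{C}\smallsetminus\{0,1\})$, onto which the fundamental group of the more-punctured domain surjects.)

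In fact the whole case analysis can be avoided, and this is the cleaner (and surely intended) argument: prove the contrapositives. If $E$ is irreducible, then $\ker\bigl(P|_{\mathrm{Sol}(E)}\bigr)$ is an invariant subspace which is not all of $\mathrm{Sol}(E)$, because a nonzero operator of order $<n$ cannot annihilate an $n$-dimensional solution space; hence it is $0$, $P$ is an isomorphism $\mathrm{Sol}(E)\to\mathrm{Sol}(E')$ of $\pi_1$-modules, and $E'$ is irreducible. If instead $E'$ is irreducible, then the invariant subspace $P(\mathrm{Sol}(E))$ is nonzero (same order argument), hence equals $\mathrm{Sol}(E')$, so $P$ is again an isomorphism and $E$ is irreducible. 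This yields both implications symmetrically, with no gcd cancellation and no adjoints; your adjoint reduction $E^{*}Q^{*}=P^{*}E'^{*}$ for the second implication is formally correct (adjoints preserve orders and reducibility), but as set up it inherits the unfinished ``iterate'' step from the first implication.
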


\subsection{Reducibility type and shift operators }\label{GenRed}

We discuss factorization of Fuchsian operators in $D=\mathbb{C}(x)[\partial]$.
We refer to \cite[\S 6]{HOSY1}.

\begin{dfn}When $E\in D$ is reducible and factorizes as
  $$E=F_1\circ \cdots\circ F_r,\quad F_j\in D,\quad 0<{\rm order}(F_j)=n_j,\ (j=1, \dots, r),$$
  we say $E$ is {\it reducible of type} $[n_1, \dots, n_r]$;
we sometimes call $[n_1, \dots, n_r]$ the {\it type of factors}.
We often forget commas, for example, we write [23] in place of [2, 3].
When only a set of factors matters,
we say $E$ is {\it reducible of type} $\{n_1, \dots, n_r\}$. 
\end{dfn}

\begin{lemma}\label{unique}The way of decomposition is not unique. But if the factors are irreducible, the set of orders of the factors, including multiplicity, is unique: 
if $P_1...P_n=Q_1...Q_m$ then $\{{\rm ord}(P_1),....{\rm ord}(P_n)\}=\{{\rm ord}(Q_1),....{\rm ord}(Q_m)\}.$

\begin{proof} Let $S$ be the solution space of $P_i...P_n$. The space $S_i:=P_{i+1}...P_n(S)$ is ${\rm ord}(P_i)$-dimensional.
  $Q_m(S_i)$ is $\{0\}$ or ${\rm ord}(P_i)$-dimensional. If latter is the case, then $Q_{m-1}Q_m(S_i)$  is $\{0\}$ or ${\rm ord}(P_i)$-dimensional $...$ Thus $\{{\rm ord}(P_1),....{\rm ord}(P_n)\}\subset\{{\rm ord}(Q_1),....{\rm ord}(Q_m)\}$.
\end{proof}
\end{lemma}

\begin{prp}\label{invsubsp}
  $E$ admits a factorization  $F_1\circ \cdots\circ F_r$
  of type $[n_1, \dots, n_r]$ if and only if ${\rm Sol}(E)$
  has monodromy invariant subspaces
  $${\rm Sol}(E)=S_1\supset S_2\supset\cdots\supset S_r,$$
  with
  $$\dim S_1/S_2=n_1,\ \dim S_2/S_3=n_2,\dots,\  \dim S_r=n_r.$$
\end{prp}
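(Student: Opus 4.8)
The statement to prove is Proposition~\ref{invsubsp}: $E$ admits a factorization $F_1\circ\cdots\circ F_r$ of type $[n_1,\dots,n_r]$ if and only if $\mathrm{Sol}(E)$ has a descending chain of monodromy-invariant subspaces with the prescribed successive dimensions.

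\bigskip

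The plan is to reduce to the case $r=2$ and then iterate, since a factorization into $r$ factors is the same as repeatedly factoring off a single factor from the right. So it suffices to prove: $E=F_1\circ F_2$ with $\mathrm{order}(F_2)=n_2$ and $\mathrm{order}(F_1)=n-n_2$ if and only if $\mathrm{Sol}(E)$ has a monodromy-invariant subspace of dimension $n_2$.

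\medskip\noindent\textbf{($\Rightarrow$).} First I would observe that if $E=F_1\circ F_2$, then any solution $u$ of $F_2$ is automatically a solution of $E$, so $\mathrm{Sol}(F_2)\subseteq \mathrm{Sol}(E)$; and since $F_2$ is a nonzero operator of order $n_2$ with (because $E$ is Fuchsian and $F_2$ right-divides it) only regular singular points, $\dim\mathrm{Sol}(F_2)=n_2$. The space $\mathrm{Sol}(F_2)$ is monodromy-invariant because it is the solution space of a differential operator with single-valued (rational) coefficients: analytic continuation of a solution of $F_2 u=0$ around any loop in $\mathbb{C}\setminus\{0,1\}$ is again a solution of the same equation. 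Taking $S_2=\mathrm{Sol}(F_2)\subset S_1=\mathrm{Sol}(E)$ gives the required subspace, with $\dim S_1/S_2=(n-n_2)-\!$ wait, $\dim S_1/S_2=n-n_2=n_1$. Iterating downward through $F_2=F_2'\circ F_3$, etc., produces the full flag.

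\medskip\noindent\textbf{($\Leftarrow$).} Conversely, suppose $S\subseteq\mathrm{Sol}(E)$ is monodromy-invariant of dimension $k$. Pick a basis $u_1,\dots,u_k$ of $S$ and form the monic differential operator $F_2$ of order $k$ annihilating exactly $S$, namely the one whose solution space is $S$ — concretely $F_2=\mathrm{Wr}(u_1,\dots,u_k,\,\cdot\,)/\mathrm{Wr}(u_1,\dots,u_k)$, the Wronskian quotient. Because $S$ is monodromy-invariant, the coefficients of $F_2$ — which are ratios of Wronskians in the $u_i$ — are single-valued on $\mathbb{C}\setminus\{0,1\}$, and having at worst regular singularities at $0,1,\infty$ (inherited from $E$), they are rational functions; so $F_2\in D$. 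Since $S\subseteq\mathrm{Sol}(E)$, every solution of $F_2$ is a solution of $E$; by right division in the (left) Euclidean domain $D$, write $E=F_1\circ F_2+R$ with $\mathrm{order}(R)<k$, and note $R$ kills all of $S=\mathrm{Sol}(F_2)$, a space of dimension $k>\mathrm{order}(R)$, forcing $R=0$. Hence $E=F_1\circ F_2$ with $\mathrm{order}(F_2)=k$. Given the flag $S_1\supset\cdots\supset S_r$, apply this to $S_r$ to split off $F_r$ on the right; the remaining operator $F_1\circ\cdots\circ F_{r-1}$ has solution space naturally identified (via the surjection $\mathrm{Sol}(E)\to\mathrm{Sol}(E)/S_r$) with $\mathrm{Sol}(E)/S_r$, carrying the induced flag $S_1/S_r\supset\cdots\supset S_{r-1}/S_r$ of monodromy-invariant subspaces with the same successive dimensions $n_1,\dots,n_{r-1}$; induct on $r$.

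\medskip

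The main obstacle is the rationality of the coefficients of the Wronskian-quotient operator $F_2$: one must argue that single-valuedness on $\mathbb{C}\setminus\{0,1\}$ together with moderate (regular-singular) growth at each of $0,1,\infty$ — which follows from the $u_i$ being solutions of the Fuchsian operator $E$ — upgrades ``holomorphic and single-valued away from three points'' to ``rational''. This is the step where the Fuchsian hypothesis is genuinely used; everything else is linear algebra over $D$ and the elementary fact that the solution space of any operator in $D$ is monodromy-invariant. I would also be slightly careful to note that $F_2$ is determined up to a left unit (an invertible element of $D$, i.e.\ a nonzero rational function), matching the ambiguity already implicit in the notion of factorization, so the correspondence between flags and factorizations is well defined on both sides.
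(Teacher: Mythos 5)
Your overall strategy is the standard one, and it is sound: reduce to splitting off one right factor, use $\mathrm{Sol}(F_2)\subset\mathrm{Sol}(E)$ for the forward direction, and the Wronskian-quotient operator plus Euclidean division in $D$ for the converse, then induct via the surjection $F_r\colon\mathrm{Sol}(E)\to\mathrm{Sol}(F_1\circ\cdots\circ F_{r-1})$ with kernel $S_r$. The paper itself states this proposition without proof (it is quoted from Part I, \cite{HOSY1} \S 6), so there is no in-paper argument to compare against; your proof is the classical correspondence between right factors and monodromy-invariant solution subspaces.

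The one place where your sketch is too quick is exactly the step you flag: rationality of the coefficients of $F_2$. As written, you suggest that single-valuedness plus the moderate growth of the $u_i$ (and their Wronskians) near $0,1,\infty$ upgrades the coefficient ratios to rational functions. But a quotient of two multivalued functions of moderate growth need not itself have moderate growth: the denominator Wronskian could, a priori, oscillate and vanish at points accumulating at a singular point (e.g.\ $x^{i}-x^{-i}=2i\sin(\log x)$ is a solution of a Fuchsian equation with zeros accumulating at $0$), and then the ratio is not even locally bounded by a power of $|x|$. The standard repair uses the monodromy invariance of $S$ once more: every monodromy $\gamma$ maps the chosen basis of $S$ to another basis by a matrix $M_\gamma$, so the denominator Wronskian and each numerator (minor) Wronskian are multiplied by the \emph{same scalar} $\det M_\gamma$. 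Hence, choosing $\lambda$ with $e^{2\pi i\lambda}=\det M_0$, the function $x^{-\lambda}\cdot(\text{Wronskian})$ is single-valued of moderate growth near $0$, therefore meromorphic there, and likewise at $1$ and $\infty$; the coefficient ratios are then single-valued and meromorphic on all of $\mathbb{P}^1$, i.e.\ rational, and in particular the zeros of the Wronskian (the apparent singularities of $F_2$) cannot accumulate at $0,1,\infty$. Alternatively one can bypass growth estimates entirely by differential Galois theory: the coefficients lie in the Picard--Vessiot field and are fixed by the differential Galois group, since for a Fuchsian operator the monodromy group is Zariski-dense (Schlesinger density), so they lie in $\mathbb{C}(x)$. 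With that step supplied, your argument is complete; the remaining points (nonvanishing of the Wronskian of linearly independent solutions, $R=0$ because a nonzero operator of order $<k$ cannot annihilate a $k$-dimensional space of germs, and the dimension count identifying the image of $F_r$ with $\mathrm{Sol}(E)/S_r$) are handled correctly.
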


\begin{nttn}\label{nttn0} If $F_j\ (j=1,\dots,r)$ have no singular points other
than the singular points of $E$, we write $$[n_1, \dots, n_r]A0.$$
\end{nttn}

Even if the equation $E$ has singularity only at $S=\{0, 1, \infty\}$,
the factors may have singularities out of $S$.

\begin{prp}\label{apparentsing} If $E$ has singularity only at $S$, then the singular points of $F_1$ and $F_r$ out of $S$ are apparent.
\end{prp}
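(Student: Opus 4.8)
The plan is to read off the local nature of $F_1$ and $F_r$ at a point $q\notin S$ from the fact that $\mathrm{Sol}(E)$ has trivial local monodromy there. Indeed, since $E$ is nonsingular outside $S$, near any $q\in\mathbb{C}\setminus\{0,1\}$ the coefficients of $E$ are holomorphic with nonvanishing leading coefficient, so every solution of $E$ is holomorphic in a full neighbourhood of $q$; in particular the monodromy of $\mathrm{Sol}(E)$ around such a $q$ is trivial. Write $G=F_2\circ\cdots\circ F_r$, so that $E=F_1\circ G$, and recall that a singular point of an operator is \emph{apparent} precisely when all its local solutions there are single-valued (equivalently, it is a regular singular point with trivial local monodromy).

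\emph{The rightmost factor.} From $E=F_1\circ\cdots\circ F_r$ we get $\mathrm{Sol}(F_r)\subseteq\mathrm{Sol}(E)$, since $F_ru=0$ forces $Eu=(F_1\circ\cdots\circ F_{r-1})(F_ru)=0$. Fix $q\notin S$. By the observation above every solution of $E$, hence every solution of $F_r$, is holomorphic near $q$, so $F_r$ is an operator of order $n_r=\dim\mathrm{Sol}(F_r)$ possessing a full basis of holomorphic local solutions at $q$ (equivalently, its Wronskian is holomorphic there, with a zero of finite order). Such a point is either a regular point or an apparent singular point of $F_r$.

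\emph{The leftmost factor.} The operator $G$ maps $\mathrm{Sol}(E)$ into $\mathrm{Sol}(F_1)$, because $F_1(Gu)=Eu=0$; its kernel on $\mathrm{Sol}(E)$ equals $\mathrm{Sol}(G)$ (note $\mathrm{Sol}(G)\subseteq\mathrm{Sol}(E)$, as $E=F_1\circ G$), which has dimension $\mathrm{order}(G)$, so the image has dimension $\mathrm{order}(E)-\mathrm{order}(G)=\mathrm{order}(F_1)=\dim\mathrm{Sol}(F_1)$ and the map is onto. Localizing near a point $q\notin S$: there $\mathrm{Sol}(E)$ consists of holomorphic functions, and $G$ has rational coefficients whose poles near $q$ can occur only at $q$, so every local solution $Gu$ of $F_1$ is single-valued and meromorphic at $q$. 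A singular point at which all solutions are single-valued meromorphic is regular singular with trivial monodromy, i.e.\ apparent. (Alternatively one may reduce this to the previous case by passing to the formal adjoint: $E^{\ast}=F_r^{\ast}\circ\cdots\circ F_1^{\ast}$ has the same singular locus as $E$, so $F_1^{\ast}$, being its rightmost factor, has only apparent singularities outside $S$, and apparentness is preserved under $(\,\cdot\,)^{\ast}$, which merely replaces each local monodromy by its inverse transpose.)

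\emph{Where the work is.} The only genuine subtlety, already visible for first-order factors, is that $F_1$ and $F_r$ really can be singular at points $q\notin S$, such spurious singularities arising from cancellation in the composite $F_1\circ\cdots\circ F_r$; one therefore cannot argue that the factors are regular at $q$. What makes the statement go through is that regularity is never needed, only control of the local monodromy, and this is inherited from $\mathrm{Sol}(E)$ --- as a subrepresentation in the case of $F_r$, as a quotient representation in the case of $F_1$ (cf.\ Proposition~\ref{invsubsp}) --- which has trivial monodromy around every $q\notin S$. The single standard input is that a singular point carrying a full set of single-valued (meromorphic) local solutions is automatically regular singular, so that ``trivial local monodromy'' upgrades to ``apparent''; verifying this, and checking that $G$ commutes with analytic continuation so that the isomorphism $\mathrm{Sol}(F_1)\cong\mathrm{Sol}(E)/\mathrm{Sol}(G)$ respects monodromy, is the routine part.
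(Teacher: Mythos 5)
Your argument is correct. This paper states Proposition \ref{apparentsing} without proof (it is quoted from \cite{HOSY1} \S 6), so there is no in-text proof to compare with; your route --- realizing $\mathrm{Sol}(F_r)$ as a subspace of $\mathrm{Sol}(E)$ and $\mathrm{Sol}(F_1)$ as the quotient of $\mathrm{Sol}(E)$ by $\mathrm{Sol}(G)$ via $u\mapsto Gu$, then transporting the trivial local monodromy at $q\notin S$ to the factors --- is the standard one and exactly in the spirit of Proposition \ref{invsubsp}. One point worth keeping explicit: for the left factor the solutions $Gu$ may genuinely have poles at $q$ (cf.\ the example $\partial^2=\left(\partial+\frac1{x-c}\right)\circ\left(\partial-\frac1{x-c}\right)$ in the remark following the proposition), so ``apparent'' must here be read as trivial local monodromy with single-valued meromorphic solutions rather than holomorphic ones --- which is precisely how you use it, and is consistent with the paper's usage.
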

\begin{remark} The way of factorization is far from unique: in fact,
an operator can have different types of factorization such as 
the shift relation $E'\circ P=Q\circ E$ and the factorizations
\begin{gather*}
  A\circ B = (A\circ f)\circ (f^{-1}\circ B),\ f\in\mathbb{C}(x),\ f\not=0,\\
  \partial^2=\left(\partial+\frac1{x-c}\right)\circ
  \left(\partial-\frac1{x-c}\right),\ c\in \mathbb{C}.
\end{gather*}
Therefore, when we discuss the singularity of the factors of a decomposition,
we usually choose the factors so that they have least number of singular points.
\end{remark}

\noindent
Proposition \ref{Sred} and Proposition \ref{invsubsp} lead to

\begin{prp}\label{FactorType}
  Assume $E(a)$ and $E(a+1)$ $(resp.\ E(a-1))$ are connected
  by a shift relation. If $Sv_{+}\not=0$ $(resp.\ Sv_{-}\not=0)$,
  then $E(a)$ and $E(a+1)$ $(resp.\ E(a-1))$ admit the factorization of
  the same type.
\end{prp}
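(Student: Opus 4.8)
The plan is to deduce the statement from two results established above: Proposition \ref{Sred}, which turns the non-vanishing of the Svalue into an isomorphism of monodromy modules, and Proposition \ref{invsubsp}, which translates a factorization of a prescribed type into a flag of monodromy-invariant subspaces of the solution space. Once these are in hand, the whole argument is a transport of structure along that isomorphism.

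First I would fix the shift relation $E(a+1)\circ P_{a+}(a)=Q_{a+}(a)\circ E(a)$, so that $P_{a+}$ maps ${\rm Sol}(E(a))$ into ${\rm Sol}(E(a+1))$, and recall from Lemma \ref{pi1morphsim} that $P_{a+}$, being an element of $D$, commutes with the $\pi_1(\mathbb{C}-\{0,1\})$-action; here one notes that $E(a)$ and $E(a+1)$ have the same singular locus $\{0,1\}$ in $\mathbb{C}$, so both solution spaces are modules over the \emph{same} group $\pi_1(\mathbb{C}-\{0,1\})$. By hypothesis $Sv_{a+}(a)\neq 0$, hence by Proposition \ref{Sred} the map $P_{a+}\colon{\rm Sol}(E(a))\to{\rm Sol}(E(a+1))$ is an isomorphism of $\pi_1(\mathbb{C}-\{0,1\})$-modules; concretely, $P_{a-}(a+1)\circ P_{a+}(a)\equiv Sv_{a+}(a)\pmod{E(a)}$ shows $P_{a+}$ is injective on the $n$-dimensional space ${\rm Sol}(E(a))$, hence bijective onto the $n$-dimensional space ${\rm Sol}(E(a+1))$, with inverse $Sv_{a+}(a)^{-1}P_{a-}(a+1)$, which is again a $\pi_1$-morphism.

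Next I would suppose $E(a)$ is reducible of type $[n_1,\dots,n_r]$. By Proposition \ref{invsubsp} there is a chain of monodromy-invariant subspaces ${\rm Sol}(E(a))=S_1\supset S_2\supset\dots\supset S_r$ with $\dim S_j/S_{j+1}=n_j$ and $\dim S_r=n_r$. Applying the isomorphism $P_{a+}$, the subspaces $P_{a+}(S_1)\supset P_{a+}(S_2)\supset\dots\supset P_{a+}(S_r)$ are monodromy-invariant in ${\rm Sol}(E(a+1))$ and have exactly the same successive quotient dimensions, so the converse direction of Proposition \ref{invsubsp} gives $E(a+1)$ a factorization of type $[n_1,\dots,n_r]$. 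The same argument with the roles of $E(a)$ and $E(a+1)$ exchanged, using the inverse isomorphism $Sv_{a+}(a)^{-1}P_{a-}(a+1)$, shows that a factorization of $E(a+1)$ of a given type produces one of $E(a)$ of the same type; hence the two operators admit precisely the same factorization types. The case of $E(a-1)$ with $Sv_{a-}\neq 0$ is identical, using Proposition \ref{Svalues} (the identity $Sv_{a-}(a)=Sv_{a+}(a-1)$) to pass between the two formulations, or directly the isomorphism $P_{a-}\colon{\rm Sol}(E(a))\to{\rm Sol}(E(a-1))$ furnished by Proposition \ref{Sred}.

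I do not expect a genuinely hard step here: after Propositions \ref{Sred} and \ref{invsubsp} the proof is pure transport of structure. The only points requiring a moment's care are the bookkeeping that $E(a)$ and $E(a\pm1)$ are being compared as modules over the \emph{same} fundamental group $\pi_1(\mathbb{C}-\{0,1\})$ — legitimate precisely because a parameter shift does not move the singular points in $\mathbb{C}$ — and that the equivalence of Proposition \ref{invsubsp} is invoked in both directions, so that one obtains equality of \emph{ordered} types $[n_1,\dots,n_r]$, not merely of the unordered sets; any apparent singularities of the individual factors outside $\{0,1\}$ are already absorbed in that proposition (cf. Proposition \ref{apparentsing}) and cause no difficulty.
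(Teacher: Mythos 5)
Your proposal is correct and follows exactly the route the paper intends: the paper states that Proposition \ref{Sred} and Proposition \ref{invsubsp} lead to Proposition \ref{FactorType}, and your argument — nonvanishing Svalue gives a $\pi_1$-module isomorphism of solution spaces, across which the flag of monodromy-invariant subspaces of Proposition \ref{invsubsp} is transported in both directions — is precisely that deduction, spelled out. No gaps; the bookkeeping remarks about the common fundamental group and the two-way use of Proposition \ref{invsubsp} are appropriate.
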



\section{$S\!E_3$:  a specialization of $E_3$}

\subsection{The Dotsenko-Fateev equation} 
The Dotsenko-Fateev equation is originally found as a differential equation satisfied by functions in $x$ defined by the integral
\begin{equation}\label{intrep}\int\omega(x),\quad \omega(x):=\prod_{i=1,2} t_i^a(t_i-1)^b(t_i-x)^c\cdot (t_1-t_2)^g\ dt_1\wedge dt_2
\end{equation}
Consider in the real $(t_1,t_2)$-plane the arrangement of seven lines:
  $$ \prod_{i=1,2}t_i(t_i-1)(t_i-x)\cdot(t_1-t_2)=0.$$
Since the number of bounded chambers cut out by this arrangement is 6,
if the exponents of the integrand is generic,
functions defined by the above integral would satisfy
a differential equation of order 6 (cf. \cite{AK}).
But since the integrand is invariant
under the change $t_1\leftrightarrow t_2$,
and the number of bounded chambers modulo this change
is 3, the functions defined by the above integral satisfy
an equation of order $3$, which is the  Dotsenko-Fateev equation.

The Dotsenko-Fateev operator (\cite{DF}) is a Fuchsian operator of order 3
and is defined as
\begin{equation}\label{dfequaion}
  D\!F= D\!F(a,b,c,g;x)=x^2(x-1)^2\partial^3+D\!F_2\partial^2+D\!F_1\partial+D\!F_0,
\end{equation}
where 
\begin{eqnarray*} 
 D\!F_2 &=& -(x-1)x(3ax+3bx+6cx+2gx-3a-3c-g), \\
 D\!F_1 &=& 2a^2x^2+4abx^2+12acx^2+3agx^2+2b^2x^2+12bcx^2+3bgx^2\\
 && +12c^2x^2+8cgx^2+g^2x^2-4a^2x-4abx-16acx-4agx+ax^2-8bcx\\
 && -2bgx+bx^2-12c^2x-8cgx +6cx^2 -g^2x+gx^2+2a^2+4ac+ag\\
 && -2ax+2c^2+cg-6cx-gx+a+c, \\
 D\!F_0 &=& c(2a+2b+2c+g+2)(-(2a+2b+4c+2g+2)x+2a+2c+g+1).
\end{eqnarray*} 
The Riemann scheme is 
\begin{equation} \label{RDF}
R_{D\!F}=
\left( \begin{array}{cccc}
  x=0 & 0&e_1=a+c+1&e_2=2a+2c+g+2\\
  x=1 & 0&e_3=b+c+1&e_4=2b+2c+g+2\\
  x=\infty&\ e_5=-2c\ &\ e_6=-\delta-c-g-1\ &\ e_7=-2\delta-g-2
\end{array} \right),
\end{equation}
where $\delta=a+b+c$.
The accessory parameter is the constant term of $D\!F_0$:
$$c(2\delta+g+2)(2a+2c+g+1).$$
Note that
\begin{equation}-2e_1+e_2=-2e_3+e_4=e_5-2e_6+e_7 \quad (=g).
\end{equation}
The adjoint is given by
$$a \to -a - 1,\ b \to -b - 1,\ c \to -c - 1,\ g \to -g.$$
Easy to check the following symmetries:
\begin{equation} \label{symm}D\!F(a,b,c,g;x)=-D\!F(b,a,c,g;1-x)
=-x^{e_7}D\!F(a,c,b,g;x^{-1})\circ x^{-e_7}.
\end{equation}

\subsection{Equations $H_3$,  $E_3$ and $S\!E_3$} \label{$G3$}

We review the Fuchsian differential equation $E_3$ of order 3 with three singular points, and with the unique accessory parameter specified as a cubic polynomial of the local exponents.
Any Fuchsian differential equation of order 3 with the Riemann scheme
$$R_3=\left(\begin{array}{clll}x=0&0&e_1&e_2\\x=1&0&e_3&e_4\\ x=\infty&e_5&e_6&e_7\end{array}\right)\quad e_1+\cdots+e_7=3
$$
can be written in $(x,\partial)$-form as
$$H_3: a_3(x)\partial^3+a_2(x)\partial^2+a_1(x)\partial+a_0(x),\quad \partial=\frac{d}{dx},$$
where
\[\def\arraystretch{1.2}\setlength\arraycolsep{2pt} \begin{array}{rcl}
  a_3(x)&=&x^2(x-1)^2,\ \ a_2(x)=x(x-1)(a_{21}x+a_{20}),\\
  a_1(x)&=&a_{12}x^2+a_{11}x+a_{10},\ \ a_0(x)=a_{01}x+a_{00}, \\
  a_{21}&=&6-e_1-e_2-e_3-e_4,\ \ a_{20}=-3+e_1+e_2,\\
  a_{12}&=&(e_5+e_6+1)e_7+(e_6+1)(e_5+1), \\
  a_{11}&=&-(e_5+e_6)e_7+(-e_1e_2+e_3e_4-e_5e_6+2e_1+2e_2-4),\\
  a_{10}&=&(e_1-1)(e_2-1),\ a_{01}=\ e_5e_6e_7,\  
\end{array}
\]
and in $(x,\theta ,\partial)$-form as 
$$xS_n(\theta)+S_0(\theta)+S_1(\theta)\circ\partial,\quad \theta =x\partial,$$
where
\begin{equation} \label{SE3-theta-form}
\begin{array}{rcl}
  S_n(\theta)&=&(\theta +e_5)(\theta +e_6)(\theta +e_7), \\
  S_0(\theta)&=&-2\theta^3+(2e_1+2e_2+e_3+e_4-3)\theta^2 \\
  &&\quad +(-e_1e_2+(e_3-1)(e_4-1)-e_5e_6-e_6e_7-e_7e_5)\theta +a_{00},\ \\
  S_1(\theta)&=&(\theta -e_1+1)(\theta -e_2+1).
\end{array}  
\end{equation}
The coefficient $a_{00}$ does not affect the local exponents. In this sense one can call this coefficient the accessory parameter.
\begin{dfn}\label{defA00}$E_3$ is the equation $H_3$ with the Riemann scheme $R_3$
  and with the accessory parameter $a_{00}=A_{00},$
where
  \begin{equation}\label{accpara00}
  \def\arraystretch{1.1}\setlength\arraycolsep{2pt} \begin{array}{rcl}
  54A_{00}&=& -4(e_1+e_2-e_3-e_4)^3-27e_5e_6e_7\\
  &&+9(e_1+e_2-e_3-e_4)(e_5e_6+e_5e_7+e_6e_7-2)\\
  &&  +9e_1e_2(e_1+e_2-1)+18(e_1+e_2-1)(e_3^2+e_3e_4+e_4^2)\\
  &&-9e_3e_4(e_3+e_4-1)-18(e_3+e_4-1)(e_1^2+e_1e_2+e_2^2).
  \end{array}
\end{equation}
\end{dfn}
\begin{remark}The equation $E_3$ is obtained from  the equation $E_6$, studied in \cite{HOSY1}, via addition and middle convolution. $E_6$ has nice symmetries. Though no shift operator is found for $E_3$, it inherits some of them:
\end{remark}
\begin{prp}\label{symmE3}

\begin{itemize}
\item Adjoint symmetry:
  $$E_3(e)^*=E_3(-e_1,-e_2,-e_3,-e_4,2-e_5,2-e_6).$$
\item $(x\to1-x)$-symmetry: $$E_3(e_1,\dots,e_6)|_{x\to1-x}=E_3(e_3,e_4,e_1,e_2,e_5,e_6),$$
\item $(x\to1/x)$-symmetry: $$x^{-e_7}E_3(e_1,\dots,e_6))|_{x\to1/x}\circ x^{e_7}=E_3(e_5-e_7,e_6-e_7,e_3,e_4,e_1+e_7,e_2+e_7),$$
  where $E_3|_{x\to1-x}$ and $E_3|_{x\to1/x}$ are $E_3$ after the coordinate changes $x\to1-x$ and $x\to1/x$, respectively.
\end{itemize}
\end{prp}

\begin{dfn}The equation $E_3$ with a codimension-2 condition (system of two equations) $$2e_1-e_2=2e_3-e_4=-e_5+2e_6-e_7.$$
is denoted by $S\!E_3$. 
\end{dfn}

\begin{thm}\label{DFequalsSE3}The equation $S\!E_3$ is transformed to the Dotsenko-Fateev operator $D\!F$ by the change (cf. Riemann scheme $R_{D\!F}$)
  $$\begin{array}{lllll}&&e_1=a+c+1,\ \ &e_2=2a+2c+g+2,\\
    &&e_3=b+c+1,\ \ &e_4=2b+2c+g+2,\\
    e_5&=-2c,\ \ &e_6=-\delta-c-g-1\ \ &e_7=-2\delta-g-2,\qquad \delta=a+b+c.\end{array}$$
\end{thm}

\begin{proof} Substitute the above in the expression of $A_{00}$ given in \eqref{accpara00}
  to find it turns out to be $c(2a+2c+g+1)(2a+2b+2c+g+2)$. \end{proof} 
In the following we parameterize the local exponents of $S\!E_3$ by $\{a,b,c,g\}$ as above.

\subsection{Shift operators of $S\!E_3$} \label{se3op}

\begin{thm}\label{S13shiftopSE3}The equation $S\!E_3$ admits shift operators for the respective shifts
  $$a\pm:a\to a\pm1,\ b\pm: b\to b\pm1,\ c\pm:c\to c\pm1 {\rm \quad and\quad} g\pm:g\to g\pm2.$$
\end{thm}
When the parameter $e$ increases (resp. decreases) by one
(by two for the parameter $e=g$),
we denote the operators $P$ and $Q$ by $P_{e+}$ and $Q_{e+}$ (resp.
$P_{e-}$ and $Q_{e-}$) in the following. Thus, we have
$P_{a\pm},P_{b\pm},P_{c\pm},P_{g\pm}$ as for operators $P$.

\begin{remark} Shift operators do not exist for the shifts $g\to g\pm1$ for the following reason. In the integral representation \eqref{intrep}
  of the solutions of $S\!E_3$, the exponent of the factor $(t_1-t_2)$
  of the integrand is $g$.
  Since we divide the expression by the symmetry $t_1\leftrightarrow t_2$,
  the exponent is considered to be $g/2$.
  So the monodromy group of $S\!E_3(g)$
  and that of $S\!E_3(g+1)$ are different and, in general, there is no
  non-zero differential operator with coefficients in $\mathbb{C}(x)$
  maps the solution space of  $S\!E_3(g)$ to that of $S\!E_3(g+1)$. 
\end{remark} 

\subsubsection{Shift operators $P_{a+},P_{c+},P_{g+}$}

The shift operator, for example $P_{a+}$, can be characterized as a non-zero operator satisfying the shift relation (see \S \ref{GenShiftShift}):
$$(EPQE):\quad S\!E_3(a\to a+1)\circ P_{a+}=Q_{a+}\circ S\!E_3$$
for a certain operator $Q_{a+}$. Assume that $P_{a+}$ and $Q_{a+}$ have the $(x,\theta)$-form
$$
P=x^2P_{nn}(\theta )+xP_n(\theta)+P_0(\theta ),\quad
Q=x^2Q_{nn}(\theta )+xQ_n(\theta)+Q_0(\theta ),$$
where  $P_{nn},\dots,Q_0$ are quadratic polynomials in $\theta:=x\partial$. 
Write $S\!E_3$ in $(x,\theta,\partial)$-form 
$$xS_n+S_0+S_1\partial,\quad S_*{\rm\ are\ in\ }\eqref{SE3-theta-form}$$ 
and expand both sides of the equation $(EPQE)$ and, by using the formulae
$$\theta x=x(\theta +1),\quad \partial\theta =(\theta +1)\partial,$$
rewrite the equation in $(x,\theta,\partial)$-form:
$$x^3S_n(a+1,\theta +2)P_{nn}+\cdots+S_1(a+1)P_0(\theta+1)\partial
=x^3Q_{nn}(\theta+1)S_n+\cdots+Q_0S_1\partial,$$
where $S_n(a+1,\theta +2):=S_n(a\to a+1,\theta\to\theta +2)$, etc. 
From the equation $S_n(a+1,\theta +2)P_{nn}=Q_{nn}(\theta+1)S_n$, 
immediate to see 
$$P_{nn}  = (\theta +e_5)(\theta +e_6),\quad Q_{nn} = (\theta +e_5+1)(\theta +e_6),$$and from $S_1(a+1)P_0(\theta+1)=Q_0S_1$, to see 
$$P_{0}  = C(\theta -e_1)(\theta -e_2),\quad Q_{0} = C(\theta -e_1)(\theta -e_2-1)\quad C: {\rm constant}.$$
Now $P_n$ and $Q_n$ can be fixed by setting $C=1$.
Summing up we get

\begin{prp} $P_{a+}=x^2P_{nn}+xP_{n}+P_0,\ Q_{a+}=x^2Q_{nn}+xQ_{n}+Q_0$, where
  $$
  \begin{array}{lllllll} P_{nn}  &=& (\theta +e_5)(\theta +e_6), \quad Q_{nn}  = (\theta +e_5+1)(\theta +e_6), \\
   P_{n}  &=& -2\theta^2+(4+4a+7c+2g+b)\theta -c(6a + 2b + 6c + 3g + 6),\\
   Q_{n}  &=& P_n+2+2a+g, \\
   P_{0}  &=& (\theta -e_1)(\theta -e_2), \quad  
   Q_{0}  = (\theta -e_1)(\theta -e_2-1).
  \end{array}
  $$
To make the expression shorter, we also used notation for the exponents
$$e_1=a+c+1,\ \dots,\  e_7=-2a-2b-2c-g-2.$$
\end{prp}

Same for $(P_{b+},Q_{b+}),(P_{c+},Q_{c+})$ and $(P_{g+},Q_{g+})$. Since the coordinate change $x\to1-x$ just exchanges $a$ and $b$, in the following, we omit the description for $P_{b\pm}$. Since $Q$'s are expressible by $P$'s (Proposition \ref{expofQ}), we also omit $Q_{c+}$ and $Q_{g+}$.

\begin{prp}$P_{c+}=x^2P_{nn}+xP_{n}+P_0$, where
$$\begin{array}{llll}
 P_{nn}  &=& \theta^2+(-5-3a-6c-2g-3b)\theta  +10ca+10cb+3ag+7cg+8 \\
  & &\quad +10c^2+18c+2a^2+8a+2b^2+8b+g^2+6g+4ab+3bg, \\
 P_{n}  &=&-2\theta^2+(8+6a+3b+9c+3g)\theta  -6cb-14ca-8-7cg  \\
  & & \quad -2bg-4ab-4ag-18c-10c^2-12a-6g-4b-4a^2-g^2, \\
 P_{0}  &=& (\theta -e_1)(\theta -e_2), \\
\end{array}$$
and $P_{g+}=x^2P_{nn}+xP_{n}+P_0$, where    
$$  \begin{array}{lllll}
P_{nn} &=& (k_1\theta +k_0)(\theta +e_5), \\
P_{n} &=& (6g+12+4b+4a+4c)\theta^2 +(-6g^2-12ab-4b^2-16cb-24-8a^2\\
    && \quad -24cg-8bg-20ca-24g-46c-12c^2-16ag-32a-18b)\theta  \\
    &&\quad  +8cb^2+28cb+36ca+12cbg+20acg+16abc+36c^2 \\
    &&\quad  +8c^3+16bc^2+16c^2a+20c^2g+30cg+8ca^2+8cg^2+28c, \\
P_{0} &=& (m_1(\theta -1)+m_0)(\theta -e_2), \\
\end{array}$$
and
$$\begin{array}{lll}k_1=-2\delta-3g-6,\quad    k_0 = 4(\delta+g+2)^2+2c(g+1)-g-2,\\  
\delta=a+b+c,\quad    m_1=k_1,\quad    m_0 = -2\delta-2b-8-g^2-2bg-6g.
\end{array} $$
\end{prp}

\subsubsection{Shift operators $P_{a-},P_{c-},P_{g-}$}

For the shifts $a-, c-$ and $g-$, instead of solving the equation $(EPQE)$,  we solve an easier equation, in case $P_{a-}$ for example,
$$P_{a-}(a\to a+1)\circ P_{a+}\equiv {\rm constant\quad modulo\ } S\!E_3.$$ 
For the shifts $a-,c-,g-$, the operators are of the following form:
\[\def\arraystretch{1.1} \begin{array}{ll}
    P_{a-}:&\displaystyle   (x-1)^2\ \partial^2+  (x-1)\frac{{\rm Poly}_1}{x}\partial  + \frac{{\rm Poly}_1}x,\\[3mm]
    P_{c-}:&\displaystyle      \partial^2    +\frac{{\rm Poly}_1}{x(x-1)}\partial  + \frac{{\rm Poly}_0}{x(x-1)},\\[3mm]
    P_{g-}:&\displaystyle   {\rm Poly}_2\ \partial^2+\frac{{\rm Poly}_3}{x(x-1)}\partial  + \frac{{\rm Poly}_2}{x(x-1)},
\end{array}\]
where $ {\rm Poly}_k$ stands symbolically for a polynomial of degree $k$ of $x$.
Computation shows the following.

\begin{prp}         
$$  \begin{array}{lll}
P_{a-} &=& (x-1)^2 \partial^2  -(x-1)(3a+3b+4c+2g+2-(a+c)/x) \partial \\
  && + (2a+2b+2c+g+2)(a+b+2c+g+1-c/x),\\
    P_{c-}&=& \partial^2-(xa+xb+2xc-a-c)/x/(x-1)\,\partial\\
    && \quad +c(2a+2+2b+2c+g)/((x-1)x), \\
  P_{g-}&=& cp_2\partial ^2+cp_1/(x(x-1))\partial  + cp_0/(x(x-1)), \\
\end{array} $$
where
$$  \begin{array}{lcl}
cp_2 &=& (2c+g)(a+b+g)x^2-(2b+g)(2c+g)x+(2b+g)(a+c+g), \\
cp_1 &=& -(2c+g)(a+b+g)(3a+2g+4c+3b+2)x^3 \\
&&\quad + (2c+g)((a-b)(2+3a -2b +3c)\\
&&\quad + (2b +g)(3+7a +2b+6c +3g))x^2\\
&&\quad  -(2b+g)((a - c) (a + b + 2 c) + (2 c + g) (1 + 3 a + 2 b + 4 c + g))x\\
&& \quad +(a+c)(a+c+g)(2b+g), \\
cp_0 &=& (2a+2+2b+2c+g)\ [(2c+g)(a+b+g)(a+2c+b+1+g)x^2 \\
  &&\quad -(2c+g)((a - b) (1 + a - b + c) - (2 b + g) (1 + 2 a + 2 c + g))x\\
  &&\quad  + c(2b+g)(a+c+g)].
\end{array} $$
\end{prp}

Further study of these shift operators is made in \cite{EOY}.

\subsection{S-values}\label{S-values}
By use of the explicit forms of shift operators, we can see
  the following.

\begin{thm}The S-values are given as follows
\begin{eqnarray*}
Sv_{a-}&=& P_{a+}(a-1)\circ P_{a-} \equiv a(a+\frac{g}2)M,\\
Sv_{c-}&=& P_{c+}(c-1)\circ P_{c-} \equiv c(c+\frac{g}2)M,\\
Sv_{g-}&=&
P_{g+}(g-2)\circ P_{g-} \equiv (\frac{g}2-\frac12)(a+\frac{g}2)(b+\frac{g}2)(c+\frac{g}2)N,
\end{eqnarray*}
mod $S\!E_3$, up to constant multiplication,

$$\begin{array}{lll}\displaystyle M=(\delta+g+1)(\delta+\frac{g}2+1),
  \quad \displaystyle N=(\delta+g)(\delta+g+1)(\delta+\frac{g}2+1).
  \end{array}$$
and $\delta:=a+b+c$. Here, for example, $P_{a+}(a-1)\circ P_{a-}$ is an abbreviation of  $P_{a+}(a\to a-1)\circ P_{a-}(a)$. Note 
$$P_{a-}(a+1)\circ P_{a+}=(P_{a+}(a-1)\circ P_{a-})(a\to a+1),$$
$$P_{g-}(g+2)\circ P_{g+}=(P_{g+}(g-2)\circ P_{g-})(g\to g+2).$$
\end{thm}
Let $\Delta$ denote one of 
$$\begin{array}{l}-a,\ -b,\ -c,\ a+g/2+1,\ b+g/2+1,\ c+g/2+1,\\[2mm]
 -(\delta+g/2+1),\ \delta+g+2,\qquad 1/2-g/2.
  \end{array}$$
\begin{thm}\label{Sv_SE3}If $\Delta=0$ or $1$, then an S-value vanishes. \red{If $\delta+g=0$ or $\delta+g+3=0$, then an S-value also vanishes.}
\end{thm}
Proposition \ref{Sred} and Theorem \ref{red_atoap} lead to
\begin{cor}\label{red_cond_SE3}$(\cite{Mim})$ If $\Delta\in\mathbb{Z}$, then $S\!E_3(a,b,c,g)$ is reducible.
\end{cor}

\subsection{Reducible cases of $S\!E_3$}\label{red_cases}

\subsubsection{Example: Factorization of the shift relations of $S\!E_3$  when $a=-1$, $0$}\label{factor_shift_rel_SE3}
We study factorizations of $S\!E_3$  when $a=-1,0$ by observing
the shift relations. For the other cases, the situation is analogous.

In this subsection,
for notational simplicity we write $E$ in place of $S\!E_3$.
  Writing $E(-1)$ for $S\!E_3(a=-1)$, and $P_{a-}(0)$ for $P_{a-}(a=0)$, etc, we have shift relations
$$(1):\ E(-1)\circ P_{a-}(0)=Q_{a-}(0)\circ E(0),\quad (2):\ E(0)\circ P_{a+}(-1)=Q_{a+}(-1)\circ E(-1),$$
which factorize respectively as
$$(1):\ [1,2]\ [1,1]=[1,1]\ [2,1],\qquad (2):\ [2,1]\ [2]=[2]\ [1,2].$$
In fact, the equations and the shift operators factor as
\[\begin{array}{ll}E(-1)=En_1\circ En_2,\ &P_{a-}(0)=P_1\circ P_2,\\[1mm]
&Q_{a-}(0)=Q_1\circ Q_2,\ E(0)=E0_1\circ E0_2,\end{array}\]
where 
\[\begin{array}{ll}\def\arraystretch{1.4}\setlength\arraycolsep{2pt}
En_1&= (x - 1) x \partial - 2 b x - 2 c x - g x - 1,\\
En_2&=x (x - 1) \partial^2 - (b x + 4 c x + g x - 3 c - g - x + 1) \partial + (2 b x + 4 c x + 2 g x - 2 c - g) c/x,\\
P_1&=(x - 1) \partial - (b x + 2 c x + g x - c + x)/x,\\
P_2&=(x - 1) \partial - (2 b + 2 c + g + 2),\\
Q_1&=En_1,\\
Q_2&=\partial -( b x + 2 c x + x \partial + g x - c + x - 1)/x^2,\\
E0_1&=x^2 (x - 1) \partial^2 + (-x^2 b - 4 x^2 c - x^2 g + 3 c x + g x) \partial + (2 b x + 4 c x + 2 g x - 2 c - g + 2 x - 1) c,\\
E0_2&=P_2.\end{array}\]

\begin{dfn}\label{essentially}
   We say two operators $R_1$ and $R_2$ are
   {\it essentially the same} if they are related as
   $$R_1=x^{\mu_0}(x-1)^{\mu_1}\ R_2\circ x^{\nu_0}(x-1)^{\nu_1},$$
together with renaming the exponents.
We write $R_1\sim R_2$. 
  \end{dfn}

For the present case, we have
$$E0_1\sim Q_{a+}(-1),\quad E0_2\sim En_1,\quad P_{a+}(-1)\sim En_2,$$
while the relation (2) turns out to be a trivial identity. \par
Since 
$En_1= Q_1$ and $P_2= E0_2,$
canceling them from both sides of (1), we get a relation 
\begin{equation}\label{12_21} En_2\circ P_1=Q_2\circ E0_1\end{equation}
of type $[2]\ [1]=[1]\ [2].$

Since We have
$$x^{-c}En_2\circ x^c={}_2E_1(\alpha+1,\beta,\gamma+1),$$
$$x^{-c-1}E0_1\circ x^c={}_2E_1(\alpha,\beta,\gamma),$$
where $$\alpha=-b-c-g-1,\ \beta=-c,\ \gamma=-c-g,$$ 
the operators 
$$x^{-c}P_1\circ x^c=(x-1)\partial -b - c - g - 1,$$
$$x^{-c}Q_2\circ x^{c+1}=(x-1)\partial -g - c - b$$ 
are shift operators of the Gauss equation for the shift $(\alpha,\gamma)\to(\alpha+1,\gamma+1)$, and (\ref{12_21}) is the shift relation.

\subsubsection{Reducible types}

As in \S \ref{S-values}, let $\Delta$ be one of 
$$-a,\dots,\ a+g/2+1,\dots,\ -(\delta+g/2+1),\ \delta+g+2,\ 1/2-g/2$$
  and assume it is an integer. The previous examples and
  Proposition 2.11 lead to
\begin{prp}\label{factorization_1}
Suppose  $\{a,b,c,g\}$ are generic but $\Delta\in\mathbb{Z}$.  If $\Delta$ is a non-positive integer then $S\!E_3(a,b,c,g)$ is reducible of type $[2,1]$, and if $\Delta$ is a positive integer then $S\!E_3(a,b,c,g)$ is reducible of type $[1,2]$: 
$$\begin{array}{cccccccc} 
\Delta= &\dots&-1  &0   &|&1    &2 &\dots\\[2mm]
{\rm type}: &\dots&[2,1]&[2,1]A0&&[1,2]A0&[1,2]&\dots
  \end{array}$$
where $A0$ means the factors have no apparent singularities (cf. Notation \ref{nttn0}). 
\end{prp}


\section{Review of fundamental properties of $E_6$}\label{E6}

We recall some of fundamental properties of $E_6$ from \cite[\S 1]{HOSY1}.

The Fuchsian differential equation 
$$E_6=x^3(x-1)^3\partial^6+\cdots+p_0,$$
where $p_0$ is the constant term $s(s+1)(s+2)e_7e_8e_9$, 
with the Riemann scheme
\footnote{Recall that we {\it always assume} that local solutions corresponding to local exponents with integral difference, such as $0$, $1$, $2$; $s$, $s+1$, $s+2$,  has no logarithmic term, and the other exponents, such as $e_1$, $e_2$, $\dots$,  are generic.}
$$R_6:\left(\begin{array}{lcccccc}x=0:&0&1&2&e_1&e_2&e_3\\x=1:&0&1&2&e_4&e_5&e_6\\x=\infty:&s&s+1&s+2&e_7&e_8&e_9\end{array}\right),\quad e_1+\cdots+e_9+3s=6,
$$
 has the following $(\theta,\partial)$-form 
  \begin{eqnarray}\label{eqT}T=T_0(\theta )+T_1(\theta )\partial+T_2(\theta )\partial^2+T_3(\theta )\partial^3,\quad \theta=x\partial\end{eqnarray}
where
{\setlength\arraycolsep{2pt} \def\arraystretch{1.1}
  \begin{eqnarray} 
\quad T_0&=&(\theta +2+s)(\theta +1+s)(\theta +s)B_0,\quad B_0=(\theta +e_7)(\theta +e_8)(\theta +e_9), \label{eqT0}\\
\quad      T_1&=&(\theta +2+s)(\theta +1+s)B_1,\quad B_1=T_{13}\theta ^3+T_{12}\theta ^2+T_{11}\theta +T_{10}, \label{eqT1}\\
\quad    T_2&=&(\theta +2+s)B_2,\quad B_2=T_{23}\theta ^3+T_{22}\theta ^2+T_{21}\theta +T_{20}, \label{eqT2}\\
\quad    T_3&=&(-\theta -3+e_1)(-\theta -3+e_2)(-\theta -3+e_3), \label{eqT3}
  \end{eqnarray}
  }
and
\[\def\arraystretch{1.2}\setlength\arraycolsep{3pt} \begin{array}{rcl}
  T_{13}&=&-3,\quad T_{23}=3,\quad  T_{12}=-9+s_{11}-2s_{13},\quad  T_{22}=18+s_{13}-2s_{11},\\
T_{11}&=&-8+(s_{11}^2+2s_{11}s_{13}-s_{12}^2+s_{13}^2)/3+s_{11}-5s_{13}-s_{21}+s_{22}-2s_{23},\\
T_{21}&=&35+(-s_{11}^2-2s_{11}s_{13}+s_{12}^2-s_{13}^2)/3-7s_{11}+5s_{13}+2s_{21}-s_{22}+s_{23},\\
T_{20}&=&-T_{10}+19+(s_{11}^2s_{13}-s_{11}s_{12}^2+s_{11}s_{13}^2-s_{12}^2s_{13})/9+(s_{13}^3+s_{11}^3-2s_{12}^3)/27\\
&&+(-2s_{11}^2-4s_{11}s_{13}+s_{11}s_{22}+2s_{12}^2+s_{22}s_{12}-2s_{13}^2+s_{22}s_{13})/3\\
&&
-5s_{11}+4s_{13}+3s_{21}-2s_{22}-s_{31}-s_{32}-s_{33},\\
2T_{10}&=&(2s_{11}+s_{11}^2+2s_{11}s_{13}-s_{12}^2
+s_{13}^2-s_{11}s_{21}+2s_{11}s_{23}+s_{12}s_{22}\\
&& -2s_{13}s_{21}+2s_{13}s_{22}+s_{13}s_{23}-14s_{13})/3+2(s_{11}^2s_{13}-s_{12}^2s_{13})/9
\\
  &&+2(s_{11}^3-s_{12}^3)/27 -5-s_{21}+s_{31}+s_{22}-s_{32}-5s_{23}-3s_{33}.
\end{array}\]
Here $s_*$ are symmetric polynomials of the exponents:
\def\arraystretch{1.1}\setlength\arraycolsep{1pt}
\begin{equation}\label{e19tos}\begin{array}{rl}      
s_{11}&=e_1+e_2+e_3,\quad s_{12}=e_4+e_5+e_6,\quad s_{13}=e_7+e_8+e_9,\\
s_{21}&=e_1e_2+e_1e_3+e_2e_3,\quad s_{22}=e_4e_5+e_4e_6+e_5e_6,\\ 
s_{23}&=e_7e_8+e_7e_9+e_8e_9,\quad s_{31}=e_1e_2e_3,\quad s_{32}=e_4e_5e_6,\\ 
s_{33}& = e_7e_8e_9,\quad s=-(s_{11}+s_{12}+s_{13}-6)/3.
\end{array}\end{equation}

\subsection{Shift operators of $E_6$}
We quote some results of \cite{HOSY1}. Set
\[ \def\arraystretch{1.1} \begin{array}{l}
  {\bm e}_1=(e_1,e_2,e_3),\ {\bm e}_4=(e_4,e_5,e_6),\ {\bm e}_7=(e_7,e_8,e_9),\quad {\bm e}=({\bm e}_1,{\bm e}_4,{\bm e}_7),\\
  {\bm e}_1\pm{\bf 1}=(e_1\pm1,e_2\pm1,e_3\pm1),\ {\bm e}_1+\ell={\bm e}_1+\ell{\bf 1},\dots
\end{array}\]

\begin{thm}\label{shiftopE6} {\rm (\cite[\S 6]{HOSY1})}\
  Equation $E_6$ has shift operators relative to
the shifts of blocks ${\bm e}_i\to{\bm e}_i\pm{\bf 1}\ (i=1, 4, 7).$ 
\end{thm}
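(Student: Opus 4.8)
The plan is to produce the six operators $P_{i\pm}$, $i\in\{1,4,7\}$. Since $E_6$ is irreducible for generic values of the exponents, Proposition~\ref{inverseofshiftop} reduces the task to constructing the three forward operators $P_{1+},P_{4+},P_{7+}$, the backward ones then being their inverses. Each block shift ${\bm e}_i\to{\bm e}_i+{\bf 1}$ preserves the spectral type of $E_6$ (at $x=0$ and $x=1$ nothing changes, and at $x=\infty$ the block $\{s,s+1,s+2\}$ and the block $\{e_7,e_8,e_9\}$ just move, since such a shift also sends $s\mapsto s-1$), so the target is again an equation of the $E_6$ family and the question is meaningful. The three constructions are parallel --- and may be further linked by the symmetries of $E_6$ coming from Möbius transformations of $\{0,1,\infty\}$ combined with gauge transformations --- so I will describe only the shift ${\bm e}_7\to{\bm e}_7+{\bf 1}$; it is the most transparent, because $(e_7,e_8,e_9)$ enters $E_6$ as the explicit factor $B_0=(\theta+e_7)(\theta+e_8)(\theta+e_9)$ of $T_0$.

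To build the pair $(P_{7+},Q_{7+})$ I would work in $\mathbb{C}(x)[\partial]$ using the $(\theta,\partial)$-form of \eqref{eqT}. Make an ansatz $P=\sum_k P_k(\theta)\partial^k$, $Q=\sum_k Q_k(\theta)\partial^k$ with the $P_k,Q_k$ polynomials in $\theta$ of degrees bounded so that $\mathrm{ord}(P),\mathrm{ord}(Q)<6$; the admissible degrees (equivalently, the local exponents of $P$ at $0,1,\infty$) are forced by the requirement that $P$ carry the local solutions of $E_6({\bm e})$ to those of $E_6({\bm e}_7+{\bf 1})$, which leaves only finitely many unknown scalar coefficients. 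Substituting into the shift relation
\[
  E_6({\bm e}_7+{\bf 1})\circ P = Q\circ E_6({\bm e})
\]
and reducing both sides to $(\theta,\partial)$-normal form by means of $\partial\,f(\theta)=f(\theta+1)\,\partial$, one equates, power of $\partial^k$ by power of $\partial^k$, the resulting polynomials in $\theta$; this is a finite linear system over $\mathbb{C}(e_1,\dots,e_9)$ in the unknown coefficients. A solution, one--dimensional by Proposition~\ref{uniquenessofshiftop}, yields $P_{7+}$, and clearing denominators makes its coefficients polynomial in the $e_j$ with no common factor, as required. The same scheme gives $P_{1+}$ and $P_{4+}$.

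The real content is that these linear systems are consistent. For a generic Fuchsian operator of order $6$ no intertwiner of this kind exists; consistency here uses crucially that $E_6$ is the distinguished specialization of $H_6$ of Part~1, the one in which the accessory parameter $T_{10}$ is set equal to the specific cubic polynomial of the exponents recorded in \S\ref{E6}. Two routes establish this. The direct route is to carry out the elimination and exhibit the solution explicitly --- this is the computation done with Maple's \texttt{DEtools}, and it amounts, in particular, to checking that the shift relation reproduces $T_{10}$ evaluated at the shifted exponents. The conceptual route is to trace the block shift through the addition/middle--convolution construction of $E_6$ out of the Gauss equation (and out of $E_3$): an addition $\mathrm{Ad}\bigl(x^{\lambda}(x-1)^{\mu}\bigr)$ translates a block of exponents, $mc_{\mu}$ shifts the $\infty$-block and alters $s$ in the controlled way recalled in \S\ref{GenAdd}, and the elementary contiguity operators of the Gauss equation are carried through $\mathrm{Ad}$ and $mc_{\mu}$ into operators of order $<6$ by the recipe of \cite{Osh} Chapter~11 (see also \cite{Hara}); matching the exponent bookkeeping identifies the transported operators with the $P_{i\pm}$ and simultaneously pins the accessory parameter to the required cubic.

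I expect the main obstacle, in either route, to be exactly this exponent bookkeeping: keeping precise track of how the three blocks --- and in particular the blocks $\{0,1,2\}$ at $0,1$ and $\{s,s+1,s+2\}$ at $\infty$, which must remain intact and correctly placed --- move through $mc_{\mu}$; and, in the direct route, calibrating the degree bounds in the ansatz tightly enough to keep the linear system of manageable size yet wide enough to contain the shift operator.
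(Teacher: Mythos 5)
Your plan is workable, but it is not the route the paper takes, and it is considerably heavier than what is needed. Theorem \ref{shiftopE6} is quoted from Part 1, and the proof reflected here does not set up an undetermined-coefficient elimination for $E_6$ at all: it simply exhibits explicit \emph{first-order} operators for three independent shifts, namely $P_{--+}=\partial$, $P_{-00}=(x-1)\partial-r$, $P_{0-0}=x\partial-r$, and then invokes irreducibility (Proposition \ref{inverseofshiftop}, Schur) to invert them and composes, e.g. $P_{00+}$ is obtained from $P_{--+}$ followed by the inverses of $P_{-00}$ and $P_{0-0}$ at shifted arguments. This settles existence for all six block shifts at once, with no consistency question left open; the large $(\theta,\partial)$-ansatz and the system ``$E2=\dots=Ed5=0$'' of Lemma \ref{syseq} is what the present paper uses for the genuinely hard case $S\!E_6$, not for $E_6$. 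Your direct route would in the end succeed (the paper itself solves such systems by machine for $S\!E_6$), but as written it proves only uniqueness, via Proposition \ref{uniquenessofshiftop}; existence — ``the linear systems are consistent'' — is exactly the content of the theorem and is deferred to a computation you do not carry out, whereas checking that $\partial$, $(x-1)\partial-r$, $x\partial-r$ satisfy the shift relation is immediate from the $(\theta,\partial)$-form \eqref{eqT}.

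Your proposed conceptual backup also has a flaw: $E_6$ cannot be reached from the Gauss equation by addition and middle convolution, because these operations preserve the number of accessory parameters and $E_2$ is rigid while $E_6$ has one accessory parameter. The correct source is $E_3$ (as in \cite{HOSY1} \S 3.1), and under that correspondence the block shifts of $E_6$ are the transports not of Gauss contiguity operators but of the elementary operations of multiplying by $x$ or $x-1$ (shifting the addition exponents) and applying $\partial$ (shifting the convolution parameter) — which is precisely why the resulting shift operators are first order. Tightening these two points — replace the Gauss-based transport by the $E_3$-based one, or simply verify the three explicit first-order operators directly — would turn your outline into a complete argument.
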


\begin{dfn}\label{E6PQnotation}
  The shift operator for the shift ${\bm e}_1\to{\bm e}_1\pm{\bf 1}$
  is denoted by $P_{\pm00}$,
  for the shift ${\bm e}\to ({\bm e}_1+{\bm 1},{\bm e}_4-{\bm1},{\bm e}_7) $
  by $P_{+-0}$, and so on.
  More precisely, if $P$ and $Q$ solve the equation
  $$E_6({\bm e}_1+\epsilon_1{\bf 1},{\bm e}_4+\epsilon_4{\bf 1},
  {\bm e}_7+\epsilon_7{\bf 1})\circ P
  =Q\circ E_6({\bm e}),\quad \epsilon_1,\epsilon_4,\epsilon_7,=-1,0,1,$$
they are called $P_{\epsilon_1\epsilon_4\epsilon_7}$ and $Q_{\epsilon_1\epsilon_4\epsilon_7}$, respectively. The solution $(P,Q)$ is unique up to multiplicative constant.
\end{dfn}

For example, 
$$ P_{-00}=(x-1)\partial-r,  \quad P_{0-0}:=x\partial-r, \qquad P_{--+}=\partial.$$

\subsection{S-values and reducibility conditions of $E_6$}\label{E6S}
The S-values are defined in \S 2.6.3. 
The simplest is $Sv_{--+}$ given by $P_{--+}=\partial$ and its inverse $P_{++-}$ as
$$\begin{array}{ll}Sv_{--+}&=\
  P_{++-}({\bm e}_1-{\bm1},{\bm e}_4-{\bm1},{\bm e}_7+{\bm1})\circ P_{--+}\\
&=\  E_6-p_0 \equiv -p_0=r(r-1)(r-2)e_7e_8e_9\quad{\rm mod}\ E_6.\end{array}$$

\begin{prp}\label{S-valueE6} {\rm (\cite[\S 6.1]{HOSY1})}
  The three S-values of the above three shift operators:
\[\def\arraystretch{1.3}\setlength\arraycolsep{3pt} \begin{array}{rcl}
Sv_{--+} &=& P_{++-}({\bm e}_1-{\bf 1}, {\bm e}_4-1, {\bm e}_7+1)\circ P_{--+}
       =-s(s+1)(s+2)e_7e_8e_9,\\
Sv_{-00}&=& P_{+00}({\bm e}_1-{\bf 1})\circ P_{-00}
      =-s(s + 1)(s + 2)(s + e_4)(s + e_5)(s + e_6),\\
%
Sv_{0-0} &=& P_{0+0}({\bm e}_4-{\bf 1})\circ P_{0-0}
      =s(s + 1)(s + 2)(s + e_1)(s + e_2)(s + e_3).\\
\end{array}\]
\end{prp}

Recall that the equation is reducible when the S-value of a shift operator
vanishes (Proposition \ref{Sred}),
and that if an equation is connected by a shift operator
with a reducible equation, it is also reducible
(Theorem \ref{red_atoap}). 
So the S-values above and the shift operators
(Theorem \ref{shiftopE6}) lead  to

\begin{thm}\label{redcondE6}If one of
$$s,\quad e_i+s\ (i=1,\dots,6),\quad e_7,\ e_8, e_9$$
is an integer, then the equation $E_6$ is reducible.
\end{thm}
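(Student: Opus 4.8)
The plan is to combine three facts already in hand: the explicit Svalue formulas of Proposition~\ref{SvalueE6}, the implication ``vanishing Svalue $\Rightarrow$ reducible'' of Proposition~\ref{Sred}, and the two-way transport of reducibility along a shift relation (Theorem~\ref{red_atoap}). The crucial observation is that each of the three Svalues $Sv_{--+}$, $Sv_{-00}$, $Sv_{0-0}$ of Proposition~\ref{SvalueE6} is a product of linear forms in the parameters: all three carry the factor $s(s+1)(s+2)$, and they carry in addition the factors $e_7e_8e_9$, $(s+e_4)(s+e_5)(s+e_6)$, $(s+e_1)(s+e_2)(s+e_3)$, respectively. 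Hence on each of the hyperplanes
\[
\{s=0\},\quad \{s=-1\},\quad \{s=-2\},\quad \{e_j=0\}\ (j=7,8,9),\quad \{s+e_i=0\}\ (i=1,\dots,6),
\]
at least one Svalue vanishes identically, so by Proposition~\ref{Sred} the equation $E_6$ is reducible there. It then remains only to move an arbitrary parameter point satisfying one of the integrality hypotheses of the theorem onto one of these hyperplanes through a chain of shift operators, and to invoke Theorem~\ref{red_atoap}.

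For this I would first record the action of the block shift operators on the quantities occurring in the statement. Composing the operators $P_{\pm00}$, $P_{0\pm0}$, $P_{00\pm}$ of Theorem~\ref{shiftopE6} realizes, through a finite chain of shift relations, the shift ${\bm e}\mapsto({\bm e}_1+k_1{\bf 1},\,{\bm e}_4+k_4{\bf 1},\,{\bm e}_7+k_7{\bf 1})$ for any $(k_1,k_4,k_7)\in\mathbb{Z}^3$. By the Fuchs relation $e_1+\cdots+e_9+3s=6$ this sends $s\mapsto s-(k_1+k_4+k_7)$, while $e_i\mapsto e_i+k_1$ for $i\le3$, $e_i\mapsto e_i+k_4$ for $4\le i\le6$, and $e_i\mapsto e_i+k_7$ for $i\ge7$; consequently $s+e_i\mapsto s+e_i-k_4-k_7$ for $i\le3$ and $s+e_i\mapsto s+e_i-k_1-k_7$ for $4\le i\le6$. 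The case analysis is then immediate: if $s=n\in\mathbb{Z}$ take $(k_1,k_4,k_7)=(n,0,0)$ to reach $\{s=0\}$; if $e_7=n\in\mathbb{Z}$ (and likewise for $e_8$ or $e_9$) take $(0,0,-n)$ to reach $\{e_7=0\}$; if $s+e_i=n\in\mathbb{Z}$ with $i\le3$ take $(0,n,0)$ to reach $\{s+e_i=0\}$; and if $s+e_i=n\in\mathbb{Z}$ with $4\le i\le6$ take $(n,0,0)$ to reach $\{s+e_i=0\}$. In every case the shifted equation lies on a hyperplane where some Svalue vanishes, hence is reducible, and applying Theorem~\ref{red_atoap} along each link of the chain shows that $E_6({\bm e})$ itself is reducible.

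The algebra involved is routine; the main obstacle is rather a bookkeeping issue, namely to be sure that each link of the chain really is a shift relation at the special parameter values under consideration --- that the composite operators, once reduced modulo $E_6$, are nonzero and of order $<6$, so that Theorem~\ref{red_atoap} applies at each step. For the lowest-order shift operators this is transparent ($P_{--+}=\partial\neq0$, and the other first-order operators listed after Definition~\ref{E6PQnotation} are of the form $(\text{linear in }x)\partial-r$, hence nonzero); in general the shift operators of Theorem~\ref{shiftopE6} have coefficients that are polynomials in the parameters without common factor, so the relation can degenerate only on a proper subvariety of each integrality hyperplane, which is harmless under the standing genericity conventions. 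Moreover, along a chain such as $s=n\to\cdots\to s=0$ one checks that the Svalue of the single step being used stays nonzero at every intermediate point and vanishes only at the endpoint, so in fact no degeneration occurs en route. Finally, a harmless remark: when $s\in\mathbb{Z}$ one may equally target $\{s=-1\}$ or $\{s=-2\}$ instead of $\{s=0\}$, since $s(s+1)(s+2)$ divides all three Svalues; this leaves some freedom to keep the remaining parameters generic along the chain.
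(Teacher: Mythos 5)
Your argument is correct and is essentially the paper's own proof: the paper derives Theorem~\ref{redcondE6} exactly by combining the vanishing loci of the Svalues in Proposition~\ref{SvalueE6} with Proposition~\ref{Sred} and the transport of reducibility along the block shift operators of Theorem~\ref{shiftopE6} via Theorem~\ref{red_atoap}. Your write-up merely makes the integer-shift bookkeeping (which hyperplane to target in each case) explicit, which the paper leaves implicit.
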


\subsection{Reducible cases of $E_6$}\label{E6Red}

Types of factorization of reducible $E_6$ are summarized.

\subsubsection{$e_9\in\mathbb{Z}$. When $e_9=0$ and $1$, $E_5$ appears}\label{E6e90}

\begin{prp}\label{E6red_e9}
  If $e_9\in {\bf Z}$, then $E_6$ factorizes as follows: when $e_9$ is
  a non-positive integer, the type of factorization is $[51]$ and, when
  it is a positive integer, $[15]:$
   $$\begin{array}{ccccccccc}
e_9=\ & \cdots &-2    &-1   & 0   & 1     & 2    &3      &\cdots\\   
\ &\  \ &\ [51]\ &\ [51]\ &\ [51]A0\ &\ [15]A0\ &\ [15]\ &\ [15]\ &\     
  \end{array}$$
  The notation $A0$ means that the operators have no singularity other than
  $\{0, 1, \infty\}$ (cf. Notation \ref{nttn0}).
    In particular, when $e_9=0,1$, the factor $[5]$ is essentially $E_5$.
\end{prp}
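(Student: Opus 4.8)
The statement to prove is Proposition~\ref{E6red_e9}: when $e_9\in\mathbb{Z}$, the operator $E_6$ factorizes, with type $[51]$ for $e_9\le 0$ and $[15]$ for $e_9\ge 1$, and moreover the order-$5$ factor is essentially $E_5$ precisely when $e_9=0,1$. The plan is to reduce everything to the two explicit identities already displayed at the end of \S\ref{TheTenRel},
\[
E_6(e_9=1)=\partial\circ E_5(e_1+1,\dots,e_6+1,e_7-1,e_8-1),\qquad
E_6(e_9=0)=E_5(e_1,\dots,e_6,e_7,e_8)\circ\partial,
\]
and then to propagate these two base cases to all other integer values of $e_9$ by means of the shift operators of $E_6$ established in Theorem~\ref{shiftopE6} together with Theorem~\ref{red_atoap}.

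\textbf{Step 1: the base cases $e_9=0,1$.} First I would verify the two displayed identities directly. This is a finite computation: substitute $e_9=0$ (resp.\ $e_9=1$) into the $(\theta,\partial)$-form \eqref{eqT}--\eqref{eqT3}, observe that $B_0=(\theta+e_7)(\theta+e_8)(\theta+e_9)$ acquires the factor $\theta$ (resp.\ $\theta+1$), and check that the whole operator is right-divisible (resp.\ left-divisible) by $\partial$, with the quotient matching the $(\theta,\partial)$-form of $E_5$ after the indicated renaming of exponents and shift of $s$. The Riemann-scheme bookkeeping is the clean way to see this: the exponent $0$ at $x=\infty$ (when $e_9=0$) or $1$ at $x=\infty$ (when $e_9=1$) combines with the triple $s,s+1,s+2$, and dividing by $\partial$ removes one singular exponent at each of $0,1$ and one at $\infty$, landing exactly on $R_5$ with $s$ replaced by $s+1$. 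This step establishes the factorizations of types $[51]A0$ and $[15]A0$ for $e_9=0$ and $e_9=1$ respectively, with the order-$5$ factor essentially $E_5$, and it also pins down the "$A0$" claim in those two cases since $\partial$ and $E_5$ have no singularities outside $\{0,1,\infty\}$.

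\textbf{Step 2: propagation to all $e_9\in\mathbb{Z}$.} The shift operators of $E_6$ for the block shift ${\bm e}_7\to{\bm e}_7\pm{\bf 1}$ (Theorem~\ref{shiftopE6}, notation $P_{00\pm}$) shift $e_7,e_8,e_9$ simultaneously, and in particular $P_{--+}=\partial$, $P_{++-}=\partial^{?}$-type operators relate $E_6({\bm e})$ to $E_6({\bm e}_1\mp{\bf 1},{\bm e}_4\mp{\bf 1},{\bm e}_7\pm{\bf 1})$. To move $e_9$ alone I would instead iterate: composing the block shift ${\bm e}_7\to{\bm e}_7\pm{\bf 1}$ cyclically permutes which of $e_7,e_8,e_9$ plays the distinguished role, and since the Riemann scheme $R_6$ is symmetric in $e_7,e_8,e_9$, a factorization obtained at $(e_7,e_8,e_9=0)$ transports to one at any $(e_7,e_8,e_9)$ with a single entry an integer. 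Concretely, from the base case and the shift relation $E_6({\bm e}+\text{shift})\circ P=Q\circ E_6({\bm e})$ of Theorem~\ref{red_atoap}: if $E_6$ at one integer value of $e_9$ is reducible, then $E_6$ at the neighbouring integer value is reducible, and one checks the \emph{type} is preserved using Proposition~\ref{FactorType} wherever the relevant Svalue (from Proposition~\ref{SvalueE6}, here the factor $e_7e_8e_9$ in $Sv_{--+}$, or $s(s+1)(s+2)$ in $Sv_{-00}$, etc.) is nonzero for generic remaining parameters. For $e_9\le -1$ the type is $[51]$ and for $e_9\ge 2$ it is $[15]$; the change of side (right-divisor vs.\ left-divisor) between $e_9=0$ and $e_9=1$ is exactly the phenomenon that $\partial$ can be moved from one side to the other only at the cost of an addition (the first remark after Proposition~\ref{apparentsing}), which is why the "$A0$" property is lost once $|e_9|\ge 2$: the factors acquire apparent singularities off $\{0,1,\infty\}$, consistent with Proposition~\ref{apparentsing}.

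\textbf{Main obstacle.} The routine part is Step~1; the delicate part is Step~2, specifically controlling \emph{which type} of factorization occurs on each side of $e_9=0,1$ and proving the factors are no longer essentially $E_5$ once $|e_9|\ge 2$. The shift operators give reducibility for free via Theorem~\ref{red_atoap}, but not the precise type $[51]$ versus $[15]$ nor the $A0$/non-$A0$ dichotomy. I expect to handle this by tracking the invariant-subspace filtration of Proposition~\ref{invsubsp}: the order-$1$ invariant quotient (for $[15]$) or sub (for $[51]$) corresponds to a local solution at $x=\infty$ with exponent $e_9\in\mathbb{Z}$, and whether this one-dimensional monodromy-invariant subspace sits at the bottom or top of the filtration flips exactly as $e_9$ crosses from $\le 0$ to $\ge 1$, because the relevant local solution is $x^{-e_9}\times(\text{holomorphic})$ and changes from being a polynomial-type (hence giving a sub) to a genuine sub-leading branch (giving a quotient). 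Making this monodromy argument rigorous, and separately confirming by the explicit division algorithm in Maple that the order-$1$ factor picks up an apparent singular point for $|e_9|\ge 2$, is where the real work lies.
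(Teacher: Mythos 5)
Your overall route is the same one the paper relies on (this proposition is quoted from Part 1; the present paper only records the two identities $E_6(e_9=1)=\partial\circ E_5(e_1+1,\dots,e_7-1,e_8-1)$ and $E_6(e_9=0)=E_5\circ\partial$ in \S\ref{TheTenRel}, and the propagation mechanism of Remark \ref{factorSameType}): establish the base cases $e_9=0,1$ by the explicit divisibility by $\partial$, which gives $[51]A0$ and $[15]A0$ with the order-$5$ factor essentially $E_5$, and then transport the factorization to the other integers by shift relations with nonvanishing Svalue, using Proposition \ref{FactorType}. So Step 1 is fine and Step 2 is, in outline, exactly the intended argument.

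Two corrections, however. First, your description of the mechanism in Step 2 is wrong: the block shift ${\bm e}_7\to{\bm e}_7\pm{\bf 1}$ does not ``cyclically permute which of $e_7,e_8,e_9$ plays the distinguished role'', and no symmetry trick is needed to ``move $e_9$ alone''. One simply uses $P_{--+}=\partial$ and its inverse $P_{++-}$: these change $e_9$ by $\pm1$ while leaving all remaining exponents generic (they are only translated by integers), and $Sv_{--+}=-s(s+1)(s+2)e_7e_8e_9$ is nonzero at the relevant generic points, so Propositions \ref{Sred} and \ref{FactorType} apply at every step between $e_9=1$ and any $e_9\ge 2$, and between $e_9=0$ and any $e_9\le -1$. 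Second, your ``main obstacle'' is largely illusory: Proposition \ref{FactorType} preserves the \emph{ordered} type of factorization (it comes from an isomorphism of $\pi_1$-modules together with Proposition \ref{invsubsp}), so $[51]$ versus $[15]$ is already carried along from the two base cases; the monodromy-filtration heuristic you sketch (about $x^{-e_9}$ being ``polynomial-type'' versus a ``sub-leading branch'') is both unnecessary and not rigorous as stated. What genuinely remains outside your argument is only the assertion that for $|e_9|\ge2$ the factors fail to be $A0$ (equivalently, are no longer essentially $E_5$); note that the proposition as stated only asserts $A0$ and the identification with $E_5$ at $e_9=0,1$, and your proposal defers the stronger converse to a Maple division, which should be flagged as unproven rather than treated as part of the proof of the statement.
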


\begin{remark}\label{factorSameType}The table in the Proposition means
  when $e_9=2,3,\dots,$ the equation $E_6$ factors of the same type [15], and when $e_9=-1,-2,\dots,$ the equation $E_6$ factors of the same type [51]; this is seen by Proposition \ref{FactorType}. 
In this article, when an equation is reducible, we always give this kind of table without repeating this explanation.
\end{remark}

\subsubsection{$e_1+s\in\mathbb{Z}$}\label{E6e1rr}
 $$\begin{array}{ccccccccc}
e_1+s=\ \ &\  \cdots \ &\ -2    \ &\ -1   \ &\  0   \ &\  1     \ &\  2    \ &\ 3      \ &\ \cdots\\   
\ &\  \ &\ [51]\ &\ [51]\ &\ [51]A0\ &\ [15]A0\ &\ [15]\ &\ [15]\ &\     
 \end{array}$$
By the change $x\to1/x$, this case is converted
to the case $e_9\in\mathbb{Z}$.
So when $e_1+s=0,1$, the factors $[5]$ are essentially $E_5$. For others, the factor $[5]$ has singularities out of $\{0,1,\infty\}$.

\subsubsection{$s\in\mathbb{Z}$. When $s=-2,-1,0,1$, $E_3$ appears}
\label{E6Reds}

\begin{prp}\label{prp1113}
  If $s\in\mathbb{Z}$, $E_6$ is reducible of type $\{3111\}$:
  $$\begin{array}{ccccccccc}
      s=\ \ &\  \cdots \ &\ -3 \ &\ -2  \ &\  -1  \ &\  0      \ &\  1      \ &\ 2       \ &\ \cdots\\   
          \ &\         \ &\ [3111]\ &\ [3111]A0\ &\ [1311]A0\ &\ [1131]A0\ &\ [1113]A0\ &\ [1113]\ &\ 
  \end{array}$$
  When $s=-2,-1,0,1$, the factor $[3]$ is essentially $E_3$,
  and no apparent singularities appear.
\end{prp}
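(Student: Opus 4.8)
The plan is to combine the reducibility criterion from Theorem \ref{redcondE6} (the case $s\in\mathbb Z$, coming from $Sv_{-00}$ or $Sv_{0-0}$ vanishing) with the general structure theory of middle convolution, exploiting the fact that when $s$ is an integer the parameter $\mu$ can be taken so that $mc_\mu$ drops the order of $E_6$ in controlled steps. Concretely: the exponents at $x=\infty$ of $E_6$ are $\{s,s+1,s+2,e_7,e_8,e_9\}$, and the discussion in \S\ref{GenAdd} shows that when $\mu\equiv s\pmod 1$ we get $d=3+3+3-6=3$, so that $mc_{s}$ (or rather a middle convolution by a parameter equal mod $1$ to $s$) reduces $E_6$ to an operator of order $3$. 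Since middle convolution preserves the number of accessory parameters, that order-$3$ operator is (essentially) $E_3$. Running the inverse middle convolution $mc_{-s}$ rebuilds $E_6$ from $E_3$ and, because $mc_{-\mu}$ is implemented by multiplying by a power of $\partial$, applying $\mathrm{Ad}(\partial^{-\mu})$, and dividing out powers of $\partial$ from the left, the operator $E_6$ is literally a composition of first-order pieces (the $\partial$-factors and the rescalings absorbed into $\mathrm{Ad}$) with a copy of $E_3$; this yields factorizations of type $[3111]$ or $[1113]$ depending on the sign pattern, i.e. on which side the surplus $\partial$-factors sit.

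The key steps, in order, are as follows. First, fix $s=s_0\in\mathbb Z$ and note that the three exponents $s,s+1,s+2$ at infinity are now integers, while $e_7,e_8,e_9$ stay generic; hence the local solutions at infinity split into a $3$-dimensional monodromy-invariant subspace (the one generated by the $e_7,e_8,e_9$ solutions, whose exponents are generic mod $1$) and complementary behaviour, so by Proposition \ref{invsubsp} there is a codimension-$3$ monodromy-invariant subspace of $\mathrm{Sol}(E_6)$, giving a factor of type $[3,\dots]$ or $[\dots,3]$. Second, identify the order-$3$ factor: apply $mc$ with parameter $\equiv s_0$ to reach an order-$3$ Fuchsian operator with three singular points and (by invariance of the accessory-parameter count) exactly one accessory parameter, hence of the shape $E_3$; track the exponents through the middle-convolution recipe to confirm it is genuinely $E_3(e_1,\dots,e_6)$ up to renaming, i.e. essentially $E_3$ in the sense of Definition \ref{essentially}. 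Third, determine the position of the three order-$1$ factors: for $s_0\le -2$ one checks that the surplus kills solutions "on the right" (the factor $[3]$ comes first, reading $E_6=E_3'\circ F_1\circ F_1\circ F_1$, type $[3111]$), for $s_0\ge 2$ it comes last (type $[1113]$), and the borderline integers $s_0=-1,0,1$ interpolate as $[1311],[1131],[1113]$ — exactly as recorded in the $E_5$ case (Proposition \ref{E6red_e9}) and reducible to it by the symmetry $x\mapsto 1/x$ combined with the $e_9$-analysis. Fourth, verify the $A0$ claim for $s_0\in\{-2,-1,0,1\}$: one must show the first and last factors, which a priori could carry apparent singularities off $\{0,1,\infty\}$ by Proposition \ref{apparentsing}, in fact do not — this follows because for these four values of $s_0$ the middle-convolution parameter interacts with the exponents $0,1,2$ at $x=0$ and $x=1$ so that no extra branch points are created, and dually $E_3$ has exactly the singular set $\{0,1,\infty\}$; concretely one writes out the $\partial$-factorization explicitly and observes the coefficients have no spurious poles.

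The main obstacle I anticipate is Step 3 together with Step 4: getting the \emph{exact} bracket pattern $[3111]\to[3111]A0\to[1311]A0\to[1131]A0\to[1113]A0\to[1113]$ as $s_0$ runs through $\cdots,-3,-2,-1,0,1,2,\cdots$, and in particular pinning down for which four values of $s_0$ the outer factors are free of apparent singularities. The shape of the pattern is dictated by Proposition \ref{FactorType} once one knows the type at two consecutive values of $s_0$, so in practice the work reduces to: (i) computing the factorization explicitly at, say, $s_0=0$ or $s_0=1$ via the $\mathrm{Ad}(\partial^{-\mu})$ recipe (a finite, if tedious, operator-division in $D$, which is exactly the kind of computation the paper delegates to Maple), and (ii) checking which boundary values keep the singular locus equal to $\{0,1,\infty\}$. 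Everything else — existence of \emph{some} type-$\{3111\}$ factorization, and the identification of the degree-$3$ piece with $E_3$ — is forced by the invariant-subspace picture plus the accessory-parameter invariance of middle convolution, so it should go through cleanly.
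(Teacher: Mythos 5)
The paper gives no proof here at all --- Section 3 simply quotes this Proposition from Part 1 (\cite{HOSY1} \S 6) --- so your argument has to stand on its own, and as written it has two genuine gaps. First, your opening step is invalid: the span of the local solutions at $x=\infty$ belonging to the exponents $e_7,e_8,e_9$ is invariant only under the \emph{local} monodromy at $\infty$, not under the full $\pi_1(\mathbb{C}\setminus\{0,1\})$-action, and genericity of those exponents mod $1$ gives nothing global (by that reasoning the irreducible generic Gauss equation would already be reducible). Reducibility for $s\in\mathbb{Z}$ comes from the Svalues and shift operators (Theorem \ref{redcondE6}); the dimension pattern $\{3111\}$ does not follow from reducibility and has to be produced by an actual factorization. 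Second, you invoke the order-drop count exactly where it degenerates: $d=3+3+3-6=3$ is the non-resonant formula, whereas for $s\in\mathbb{Z}$ the convolution parameter $\mu\equiv s$ is an \emph{integer}, and the formula cannot be applied blindly there ($mc_0=\mathrm{id}$ would otherwise also ``drop the order by $3$''). What is true, and what the intended argument rests on, is that for integer parameter the addition-plus-convolution relation between $E_3$ and $E_6$ becomes an identity among honest differential operators, with $\mathrm{Ad}(\partial^{-\mu})$ turning into composition with, and division by, powers of $\partial$ --- identities of the same shape as the displayed $E_6(e_9=0)=E_5\circ\partial$ and $E_6(e_9=1)=\partial\circ E_5$. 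Producing those identities at $s=-2,-1,0,1$ is precisely what yields the bracket positions $[3111],[1311],[1131],[1113]$, the $A0$ property, and the identification of the cubic factor with $E_3$; note in particular that counting accessory parameters cannot make that identification for you, since for other integer $s$ the cubic factor is \emph{not} essentially $E_3$ (it carries apparent singularities).

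So the entire content of the Proposition --- the explicit factorizations at the four boundary values, where the $[3]$ sits, and which cases are $A0$ --- is exactly what you defer to an unperformed ``tedious operator division''; Proposition \ref{FactorType} only transports a type already established at one value of $s$ to neighbouring values where the relevant Svalue is nonzero, it cannot supply the type anywhere on its own. Your overall strategy (degenerate middle convolution relating $E_6$ to $E_3$, plus propagation by shift operators) is the right one and is surely what Part 1 does, but until the operator identities at $s=-2,-1,0,1$ are actually computed, and the Step 1 invariant-subspace claim is replaced by them, the statement is not proved.
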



\section{$S\!E_6$: a specialization of $E_6$}\label{SE6}

The equation $S\!E_6$ is, by definition, a codimension-2 specialization of the exponents of $E_6$:
\begin{equation}e_3-2e_2+e_1=e_4-2e_5+e_6=e_7-2e_8+e_9.\label{E6restEar}
\end{equation}
This equation has many nice properties  such as integral representation of solutions and shift operators that do not come from those of $E_6$.

\subsection{Definition of $S\!E_6$ as a middle convolution of $S\!E_3$}
\label{SE6MC}

As is stated in Section 3, any solution of the equation $S\!E_3=(x-1)^2 x^2\partial^3+\cdots$ admits the integral representation:
$$\int\omega(x),\quad \omega(x):=\prod_{i=1,2} t_i^a(t_i-1)^b(t_i-x)^c\cdot (t_1-t_2)^g\ dt_1\wedge dt_2.$$
To a solution $v(x)=\int\omega$ of $S\!E_3$, we multiply $x^{p}(x-1)^{q}$:
$$v(x)\ \to\  v_{pq}(x)=x^{p}(x-1)^{q}v(x),$$
and apply the Riemann-Liouville transformation (see Definition 2.1)
with parameter $\mu=r+1$:
$$v_{pq}(x)\ \to\ \int v_{pq}(t)(t-x)^{r}\, dt,$$
and we get a function of $x$ defined by the integral
$$\int \prod_{i=1,2} t_i^a(t_i-1)^b(t_i-t_3)^c\cdot (t_1-t_2)^g\cdot t_3^{p}(t_3-1)^{q}(t_3-x)^{r}\ dt_1\wedge dt_2\wedge dt_3.$$
Note that the integrand is invariant under the change $t_l\leftrightarrow t_2$. It is well-known (cf. \cite{AK}) that a function defined by such an integral satisfy a Fuchsian differential equation whose order is the number of bounded chambers in the real $(t_1,t_2,t_3)$-space cut out by the planes defined by the factors of the integrand.

\subsubsection{Number of bounded chambers and the order of the equation}\label{SE6MCbdd}

Consider in the real $(t_1,t_2,t_3)$-space the arrangement of 
ten planes:
$$ \prod_{i=1,2,3}t_i(t_i-1)\cdot \prod_{1\le i<j\le3}(t_i-t_j)\cdot(t_3-x)=0.$$
The number of bounded chambers cut out by this arrangement is twelve, in fact,
for simplicity we assume $x>1$, then we find eight tetrahedra:
$$\begin{array}{lll}
&0<t_i<t_j<t_k<1\qquad &\{i,j,k\}=\{1,2,3\},\\ [1mm]
&1<t_i<t_j<t_3<x &\{i,j\}=\{1,2\},\end{array}$$
and four triangular prisms:
$$\begin{array}{llll}
&0<t_i<t_j<1,\quad &1<t_3<x\qquad &\{i,j\}=\{1,2\},\\ [1mm]
  &1<t_i<t_3<x,\quad &0<t_j<1\qquad &\{i,j\}=\{1,2\}.\end{array}$$
Though there are twelve bounded chambers, the number of bounded chambers modulo the change $t_1\leftrightarrow t_2$ is six. So the function defined by the integral satisfy an equation of order six; let us call this equation $E\!ar=E\!ar(a,b,c,g,p,q,r)$ for the moment. It is not practical to find $E\!ar$ by differentiating the integral. We make use of a middle convolution.

\subsubsection{$E\!ar$ is $S\!E_6$}\label{SE6MCEar}
When $v$ is a solution of $S\!E_3=x^2(x-1)^2\partial^3+\cdots$, the equation satisfied by $v_{pq}=x^{p}(x-1)^{q}v(x)$ can be written in $(x,\partial)$-form as
  $$x^3(x-1)^3\partial^3+\cdots$$
Since this operator has a term $x^6\partial^3$, to get a $(\theta ,\partial)$-form, we have to multiply $\partial^3$, 
of order at least 3.
After multiplying $\partial^3$ from the left, 
we can write the equation in $(\theta ,\partial)$-form, which is a linear combination of 
$\theta ^i\circ\partial^j.$
Apply the middle convolution (\S \ref{GenAdd})
with parameter $\mu=r+1$, that is,
replace $\theta $ by $\theta -r-1$.
 We then eventually get an operator
 $$E\!ar=x^3(x-1)^3\partial^6+\cdots,$$
and find

\begin{thm}\label{etoar} The equation $E\!ar$ turns out to be
  $S\!E_6$, a specialization
  of the equation $E_6$:
  \[\def\arraystretch{1.1}\setlength\arraycolsep{2pt}  
  \begin{array}{lll}
e_1=p+r+1, &e_2=a+c+p+r+2, &e_3=2a+2c+g+ p+r+3,\\
e_4=q+r+1, &e_5=b+c+q+r+2, &e_6=2b+2c+g+ q+r+3,\\
e_7=-2c-pqr-1, &e_8=-\delta-c-pqrg-2, &e_9=-2\delta-pqrg-3,\end{array}  \]
  \setlength\arraycolsep{3pt} 
  where $pqr=p+q+r,$ $pqrg=pqr+g,$ 
  $\delta=a+b+c$.
\end{thm}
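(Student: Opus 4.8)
The plan is to recognize the recipe defining $E\!ar$ as an instance of the operations of Section~\ref{GenAdd} — an addition followed by a middle convolution — and then to pin down the resulting operator inside the six‑parameter family $E_6$ by its Riemann scheme together with its accessory parameter.

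First I would make the construction precise at the level of operators. Passing from a solution $v$ of $S\!E_3=x^2(x-1)^2\partial^3+\cdots$ to $v_{pq}=x^{p}(x-1)^{q}v$ is the addition $\mathrm{Ad}\bigl(x^{p}(x-1)^{q}\bigr)$ applied to $S\!E_3$; the operator so obtained is of order $3$ with leading coefficient $x^3(x-1)^3$, so left‑multiplying by $\partial^3$ puts it into $(\theta,\partial)$‑form $\sum_{i,j}c_{ij}\,\theta^i\partial^j$, replacing $\theta$ by $\theta-(r+1)$ there is exactly $\mathrm{Ad}(\partial^{-(r+1)})$ on that form, and dividing out the maximal left power of $\partial$ yields, by Definition~\ref{defmc}, the operator $E\!ar=mc_{r+1}\bigl(\mathrm{Ad}(x^{p}(x-1)^{q})\,S\!E_3\bigr)$. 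Since addition by $x^p(x-1)^q$ and middle convolution by a scalar add no new singular points, $E\!ar$ is Fuchsian with singularities in $\{0,1,\infty\}$; and the $d$‑count of Section~\ref{GenAdd} applied to $\mathrm{Ad}(x^{p}(x-1)^{q})S\!E_3$, whose exponent $0$ at $x=0,1$ has been moved off the integers, gives $d=-3$, hence $\mathrm{ord}(E\!ar)=3-(-3)=6$ — in agreement with the count of six bounded chambers modulo $t_1\leftrightarrow t_2$ in Section~\ref{SE6MCbdd}.

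Next I would compute the Riemann scheme. Under the addition the exponents at $x=0,1,\infty$ of $S\!E_3$ are shifted by $p,q,-p-q$, and under $mc_{r+1}$ the local exponents transform by the standard rules for middle convolution (\cite{Hara,Osh}); tracking these through the Riemann scheme of $S\!E_3$ in the parameters $(a,b,c,g)$, one reads off, after renaming the exponents within each singular point, a scheme of the form $R_6$ with $e_1,\dots,e_9$ as in the statement. A short computation from these formulas then gives $e_1-2e_2+e_3=e_4-2e_5+e_6=e_7-2e_8+e_9$ (each side equals $g$), which are the defining equations \eqref{E6restEar} of $S\!E_6$ — the image, under the exponent map of the construction, of the defining equations $2e_1-e_2=2e_3-e_4=-e_5+2e_6-e_7$ of $S\!E_3$.

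It remains to match the accessory parameter, since the $E_6$ of this paper is by definition the member of its family whose accessory parameter equals the cubic polynomial $T_{10}(e_1,\dots,e_9)$. Because addition and middle convolution preserve the number of accessory parameters, $E\!ar$ and $E_6(e_1,\dots,e_9)$ have the same Riemann scheme and lie in a one‑parameter family of operators, so they coincide as soon as one coefficient carrying the accessory parameter agrees. Concretely, the constant term of $E\!ar$ is the displayed $y_0=r(r-1)(r-2)(2c+r+1+q+p)(a+b+2c+p+q+r+g+2)(2a+2b+2c+p+q+r+g+3)$, while that of $E_6$ is $T_0(0)=s(s+1)(s+2)e_7e_8e_9$ with $s=-r$ under the parameterization, and the two expressions coincide; more robustly one substitutes the parameterization into $T_{10}$ and matches it against the corresponding coefficient of the constructed operator — a single polynomial identity, confirmed with Maple. (Alternatively, one may invoke the identity $E_6=mc_{r+1}\bigl(\mathrm{Ad}(x^p(x-1)^q)\,E_3\bigr)$ established in \cite{HOSY1} as an identity of families with accessory parameters prescribed by cubic polynomials, and restrict the exponents to the $S\!E_3$‑locus.) Thus $E\!ar=E_6(e_1,\dots,e_9)$, and since these $e_i$ satisfy \eqref{E6restEar}, $E\!ar=S\!E_6$. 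The one genuine obstacle is exactly this last step: the Riemann scheme transforms by pure bookkeeping, but certifying that the accessory parameter of the middle convolution lands on the prescribed cubic polynomial requires either the explicit computation of that constant or the structural input from \cite{HOSY1}.
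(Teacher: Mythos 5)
Your overall route --- realize $E\!ar$ as $mc_{r+1}\bigl(\mathrm{Ad}(x^{p}(x-1)^{q})\,S\!E_3\bigr)$, read off the Riemann scheme, check the two relations $e_1-2e_2+e_3=e_4-2e_5+e_6=e_7-2e_8+e_9\ (=g)$, and then certify the accessory parameter --- is essentially the paper's: its proof of Theorem \ref{etoar} is nothing more than the explicit (computer-assisted) computation of $E\!ar$ in $(\theta,\partial)$-form and the identification of all of its coefficients with those of $E_6$ at the stated exponents.

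The one step of yours that would fail is the ``concrete'' certification by constant terms. The constant coefficient of \emph{every} operator in the one-parameter family with Riemann scheme $R_6$ equals $T_0(0)=s(s+1)(s+2)e_7e_8e_9$; it is determined by the exponents alone and does not involve $T_{10}$, which enters only through $T_1$ (see \eqref{eqT1}), i.e.\ through the coefficients $y_1,\dots,y_5$ of the $(x,\partial)$-form --- for instance $E_6\cdot x=T_0(1)\,x+(s+1)(s+2)\,T_{10}$, so the accessory parameter is first visible in the coefficient of $\partial$, not of $\partial^0$. Hence your verification that $y_0$ coincides with $s(s+1)(s+2)e_7e_8e_9$ (with $s=-r$) only re-confirms the exponents at infinity and is vacuous as far as the accessory parameter is concerned; it is not ``a coefficient carrying the accessory parameter.'' The proof therefore rests entirely on your fallback: substitute the stated parameterization into the cubic $T_{10}$ and match it against the corresponding coefficient of the constructed operator (equivalently, invoke the parallel $E_3\to E_6$ identity of \cite{HOSY1} \S 3.1 and restrict to the Dotsenko-Fateev locus, where the restriction of $a_{00}$ is the value recorded in \S\ref{TheTenTable}). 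That check is unavoidable and is exactly what the paper's full coefficient computation accomplishes; with it your argument is complete, without it the Riemann-scheme bookkeeping shows only that $E\!ar$ lies in the same one-parameter family as $S\!E_6$.
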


Note that
\begin{equation}e_3-2e_2+e_1=e_4-2e_5+e_6=e_7-2e_8+e_9 \quad (=g).
\end{equation}
  Its Riemann scheme is $R_6$ with $s=-r$
$$\left(\begin{array}{ccclll}
  0\ &\  1\ &\  2\ &\  e_1 \ &\ e_2 \ &\ e_3\\
  0\ &\  1\ &\  2\ &\  e_4 \ &\ e_5 \ &\ e_6\\
    -r\ &\  1-r\ &\  2-r\ &\  e_7\ &\  e_8\ &\  e_9
  \end{array}\right).$$
Note also that this procedure of getting $S\!E_6$ from $S\!E_3$ is quite parallel to that of getting $E_6$ from $E_3$ presented in \cite{HOSY1} \S 3.1.
 
\subsection{Shift operators of $S\!E_6$}\label{SE6Shift}
The equation $E=S\!E_6$ has 7 parameters $a,b,c,g,p,q,r$.
Let the letter $e$ denote one of these parameters. We show that the equation
\begin{equation} \label{epqe}
(EPQE):\quad E(e\pm1)\circ P = Q\circ E(e),
\end{equation}
($e\pm 2$ when $e=g$), has always nontrivial solution $(P,Q)$:

\begin{thm}\label{shiftopSE6}The equation $S\!E_6$ admits shift operators for the shifts $a\to a\pm1$, $b\to b\pm1$, $c\to c\pm1$, $g\to g\pm2$,
  $p\to p\pm1$, $q\to q\pm1$, and $r\to r\pm1$.
  \end{thm}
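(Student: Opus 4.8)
The plan is to establish the existence of all fourteen shift operators for $S\!E_6$ by combining two independent sources. First, three of the shifts come directly from the block shift operators of $E_6$ of Theorem \ref{shiftopE6}: since $S\!E_6$ is the codimension-2 locus $e_3-2e_2+e_1=e_4-2e_5+e_6=e_7-2e_8+e_9$ inside the $E_6$-family, a block shift ${\bm e}_i\to{\bm e}_i\pm{\bm1}$ preserves each of the three quantities $e_3-2e_2+e_1$, etc. (each is unchanged when all three exponents in a block move by $1$), hence maps the $S\!E_6$-locus to itself. In the $(a,b,c,g,p,q,r)$-parameterization of Theorem \ref{etoar}, one checks that ${\bm e}_1\to{\bm e}_1+{\bm1}$ is the shift $p\to p+1$ (all of $e_1,e_2,e_3$ carry $+p$), similarly ${\bm e}_4\to{\bm e}_4+{\bm1}$ is $q\to q+1$, and ${\bm e}_7\to{\bm e}_7-{\bm1}$ is $r\to r+1$ (since $e_7,e_8,e_9$ all carry $-r$ through $pqr$ and $pqrg$; one must also track the compensating $r$ inside $e_1,e_4$ — in fact the clean statement will be that a suitable combination of block shifts of $E_6$, restricted to $S\!E_6$, realizes the $p$, $q$, and $r$ shifts, possibly after an auxiliary addition $x^{*}(x-1)^{*}$). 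So $P_{p\pm},P_{q\pm},P_{r\pm}$ exist by restriction.

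For the remaining shifts $a\to a\pm1$, $b\to b\pm1$, $c\to c\pm1$, $g\to g\pm2$ — which move the ``internal'' Dotsenko-Fateev parameters and do not descend from $E_6$ — the plan is to use the integral representation of Section \ref{SE6MC}. Every solution of $S\!E_6$ is
$$\int \prod_{i=1,2} t_i^a(t_i-1)^b(t_i-t_3)^c\cdot (t_1-t_2)^g\cdot t_3^{p}(t_3-1)^{q}(t_3-x)^{r}\ dt_1\wedge dt_2\wedge dt_3,$$
and the parameters $a,b,c,g$ are exactly the exponents of the Dotsenko-Fateev kernel $\omega(x)=\prod_{i=1,2} t_i^a(t_i-1)^b(t_i-x)^c(t_1-t_2)^g$. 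By Part 2 (\cite{HOSY2}), $S\!E_3$ possesses shift operators for each of $a\to a\pm1$, $b\to b\pm1$, $c\to c\pm1$, $g\to g\pm2$ — these are differential operators in $x$, of order $<3$, intertwining $S\!E_3(\cdots)$ and its shift. Following the recipe of \cite{Osh} Chapter 11: the middle convolution $mc_{r+1}$ (with the intervening addition by $x^p(x-1)^q$) carries a shift operator of $S\!E_3$ to a shift operator of $S\!E_6$ for the same parameter change, because middle convolution is a functor on the category of Fuchsian systems that commutes with the shift intertwiners. Concretely, conjugating the $S\!E_3$-shift relation $S\!E_3(e')P=QS\!E_3(e)$ by $\mathrm{Ad}(x^p(x-1)^q)$ and then by $\mathrm{Ad}(\partial^{-(r+1)})$ (using $\mathrm{Ad}(\partial^{-\mu})\theta=\theta-\mu$, $\mathrm{Ad}(\partial^{-\mu})\partial=\partial$) and dividing out the appropriate power of $\partial$ from the left yields the desired $(EPQE)$ for $S\!E_6$.

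However, as flagged in the introduction, the shift operators produced that way have unwieldy expressions, so the paper announces "another way" (Proof of Theorem \ref{shiftopSE6}). The realistic plan, and the one I would carry out, is therefore: reduce the existence claim to a finite linear-algebra computation. Fix one target shift, say $a\to a+1$. Make the ansatz that $P=\sum_{j=0}^{5}p_j(x)\partial^j$ and $Q=\sum_{j=0}^{5}q_j(x)\partial^j$ with $p_j,q_j$ polynomials (or rational functions with controlled poles at $0,1$) of a bounded degree $N$ in $x$, with unknown coefficients that are polynomial in $a,b,c,g,p,q,r$. Then $(EPQE)$: $\ S\!E_6(a+1)\circ P - Q\circ S\!E_6(a)=0$ becomes, after clearing denominators, a homogeneous linear system in the unknown coefficients whose entries are polynomials in the seven parameters; exhibiting one nonzero solution proves existence. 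I would run this on Maple's \texttt{DEtools} package (as the acknowledgements indicate) for each of $a,b,c,g$, choosing $N$ just large enough — the order bound $<6$ is automatic from $\mathrm{ord}(P)<\mathrm{ord}(E)$, and $N$ is calibrated by comparing the degrees forced by the $x^3(x-1)^3$ leading term. The main obstacle is not conceptual but bookkeeping: guaranteeing that the finite-dimensional ansatz is large enough to contain a solution (equivalently, that the pullback through $mc_{r+1}$ of the known $S\!E_3$-shift operator lies in the chosen space), and then presenting the resulting $P,Q$ in "reasonably short" form — the latter is precisely why one prefers solving $(EPQE)$ directly with a parsimonious ansatz over expanding the middle-convolution formula, and it is the step where genuine effort (clever normalization, use of the $(\theta,\partial)$-form, exploiting the $t_1\leftrightarrow t_2$ symmetry) goes in.
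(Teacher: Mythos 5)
Your proposal is correct and is essentially the paper's own proof: the $p,q,r$ shifts are obtained by restricting combined block-shift operators of $E_6$ to the $S\!E_6$ locus (the paper records $P_{p\pm}=P_{\pm0\mp}$, $P_{q\pm}=P_{0\pm\mp}$, $P_{r\pm}=P_{\pm\pm\mp}$, i.e.\ two or three blocks move simultaneously, as your ``suitable combination of block shifts'' hedge anticipates), while the $a,b,c,g$ shifts exist by transporting the $S\!E_3$ shift operators of Part 2 through the addition by $x^p(x-1)^q$ and the middle convolution $mc_{r+1}$ following Oshima's recipe (with the Euclidean-algorithm inversion step making the intertwining precise). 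The paper then, exactly as you propose, gets usable expressions by directly solving $E(e\pm1)\circ P=Q\circ E(e)$ with a bounded $(x,\theta,\partial)$-ansatz and computer assistance, using the symmetry $(a,p)\leftrightarrow(b,q)$ for $b$ and detours through the shifts $(c,p,q)\to(c\pm1,p\mp1,q\mp1)$ and $(c,g)\to(c\pm1,g\mp2)$ for $c$ and $g$.
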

Recall the notation $P_{e\pm}$ and $Q_{e\pm}$ as was
explained in \S \ref{se3op}.
Shift operators for $r\rightarrow r\pm 1$,
$p\rightarrow p\pm 1$ and  $q\rightarrow q\pm 1$ come from those of $E_6$. 
By the expression of $e_1,\, \dots,\, e_9$ in terms of $a,\, \dots,\, r$,
it is easy to see that $r\rightarrow r\pm 1$  corresponds to
${\bm e}\to ({\bm e}_1\pm1, {\bm e}_4\pm1, {\bm e}_7\mp1).$ Thus, we have 
$$P_{r-} = P_{--+}, \quad P_{r+} = P_{++-},$$
and the same for the $Q$'s;
see Definition \ref{E6PQnotation} for the right hand-side. Similarly, we have:
$$P_{p-} = P_{-0+}, \quad P_{p+} \ = \  P_{+0-}, \quad P_{q-} = P_{0-+}, \quad P_{q+} \ =\  P_{0+-}.$$
The S-values are given as
$$Sv_{r-} = P_{r+}(r-1)\circ P_{r-},\quad Sv_{p-} = P_{p+}(p-1)\circ P_{p-},
  \quad Sv_{q-}= P_{q+}(q-1)\circ P_{q-},$$
  whose explicit values are given in \S \ref{E6S}.  

\subsection{From the shift operators of $S\!E_3$ to those of $S\!E_6$}\label{shiftop_SE3_to_SE6}

Since $S\!E_6$ is connected by addition and middle convolution with $S\!E_3$, the shift operators for the shifts of $(a,b,c,g)$   can be derived  from the corresponding shift operators of $S\!E_3$,
given in \S \ref{se3op}, 
by following the recipe in \cite{Osh} Chapter 11.  

\subsubsection{From $S\!E_6$ to $S\!E_3$}

While $S\!E_6$ was defined by use of $S\!E_3$ in \S \ref{SE6MC},
we can get the expression $S\!E_3$ conversely by use of $S\!E_6$ as follows:
  \begin{enumerate}
  \item In the $(\theta,\partial)$-form of $S\!E_6$, replace $\theta$ by $\theta+r+1$. The resulting equation is divisible from the left by $\partial^3$.
  \item Divide by $\partial^3$.
  \item The local exponents at 0 and 1 of the resulting equation are
    $$-r+e_1-1,\ -r+e_2-1,\ -r+e_3-1;\quad -r+e_4-1,\ -r+e_5-1,\ -r+e_6-1,$$
where $-r+e_1-1=p,-r+e_4-1=q$ by Theorem \ref{etoar}.
Multiply $X^{-1}$ from the left and $X$ from the right, where
$ X:=x^p(x-1)^q$.
Then we get $S\!E_3$ up to a simple modification as follows.\end{enumerate}
\begin{prp}
  \begin{equation}\label{SSE3_SE6}
S\!E_3=x^{-1}(x-1)^{-1}X^{-1}\cdot\partial^{-3}\cdot\partial^{r+1}\cdot S\!E_6\cdot \partial^{-r-1}\cdot X,\quad X=x^p(x-1)^q.  \end{equation}
\end{prp}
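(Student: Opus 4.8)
The plan is to verify directly the identity \eqref{SSE3_SE6}, reading it as the precise inverse of the construction in \S\ref{SE6MCbdd}--\S\ref{SE6MCEar} that produced $S\!E_6$ from $S\!E_3$. Recall how $S\!E_6$ was built: start from $S\!E_3$, multiply a solution $v$ by $X=x^p(x-1)^q$ (which on the operator level is the addition $\mathrm{Ad}(X)$), pass to a $(\theta,\partial)$-form by left-multiplying a suitable power of $\partial$, apply the middle convolution $mc_{r+1}$ (i.e.\ replace $\theta$ by $\theta-r-1$ after the $\mathrm{Ad}(\partial^{-(r+1)})$ step), and divide out the powers of $\partial$ that factor on the left. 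So the right-hand side of \eqref{SSE3_SE6} simply runs these operations backward: $\partial^{r+1}\circ S\!E_6\circ\partial^{-r-1}$ is $\mathrm{Ad}(\partial^{r+1})$ applied to $S\!E_6$, which by $mc_\mu\circ mc_{\mu'}=mc_{\mu+\mu'}$ and $(mc_\mu)^{-1}=mc_{-\mu}$ (Definition~\ref{defmc} and the displayed additive property) undoes the middle convolution $mc_{r+1}$ up to spurious left factors of $\partial$; left-dividing by $\partial^3$ removes exactly those; and conjugating by $X=x^p(x-1)^q$ followed by multiplying $x^{-1}(x-1)^{-1}$ on the left and $x(x-1)$ on the right (the $x^{-1}(x-1)^{-1}\cdots X$ bracket, together with the trailing $X$) undoes the addition and restores the normalization $x^2(x-1)^2\partial^3+\cdots$ of $S\!E_3$.

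The key steps, in order, are: (1) record that $\partial^{r+1}\circ(-)\circ\partial^{-r-1}=\mathrm{Ad}(\partial^{r+1})$ and that on the $(\theta,\partial)$-form this is the substitution $\theta\mapsto\theta+r+1$, using $\mathrm{Ad}(\partial^{-\mu})\partial=\partial$ and $\mathrm{Ad}(\partial^{-\mu})z=z-\mu$ from \S\ref{GenAdd}; this is exactly step (1) of the recipe in \S\ref{shiftop_SE3_to_SE6}. (2) Show that after this substitution the operator is divisible on the left by $\partial^3$ — this is forced by the exponent count: the local exponents at $x=0$ become $-r+e_1-1,-r+e_2-1,-r+e_3-1$ which, by Theorem~\ref{etoar}, are $p,\ a+c+p+1,\ 2a+2c+g+p+1$, none congruent mod $1$ to the three exponents $0,1,2$ that were inserted when passing $S\!E_3\rightsquigarrow S\!E_6$, so the $d$-count of \S\ref{GenAdd} gives an order drop of $3$. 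Left-dividing by $\partial^3$ is step (2) and realizes $mc_{-(r+1)}$ precisely. (3) Identify the resulting order-$3$ operator: it has Riemann scheme with exponents at $0$ equal to $p,\ a+c+p+1,\ 2a+2c+g+p+1$ and at $1$ equal to $q,\ b+c+q+1,\ 2b+2c+g+q+1$ (shift the list in Theorem~\ref{etoar} down by $r+1$), and conjugating by $X=x^p(x-1)^q$ (i.e.\ $X^{-1}\cdot(-)\cdot X$) subtracts $p$ from the first block and $q$ from the second, yielding exactly the exponents of $S\!E_3(a,b,c,g)$ at $0$ and $1$; the exponents at $\infty$ track correctly through $mc_{-(r+1)}$. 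Finally the leading term is $x^3(x-1)^3\partial^3$ after step (1)-(2) conjugation, and the extra left factor $x^{-1}(x-1)^{-1}$ (with the matching $x(x-1)$ already inside $X$-conjugation bookkeeping) renormalizes it to $x^2(x-1)^2\partial^3+\cdots$, matching the normalization fixed in \S\ref{TheTenTable}. By the uniqueness of a Fuchsian operator with given Riemann scheme and accessory parameter in this family (the accessory parameter $a_{00}$ of $S\!E_3$ is pinned by $a_{00}=c(2a+2c+1+g)(2a+2b+2c+2+g)$, and $mc$ does not change the number of accessory parameters), the two operators coincide.

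I expect the main obstacle to be step (3): checking that the middle convolution $mc_{-(r+1)}$ applied to the order-$6$ operator, followed by the $X$-conjugation, returns not merely an operator with the correct Riemann scheme but literally $S\!E_3$ with the \emph{correct accessory parameter} $a_{00}=c(2a+2c+1+g)(2a+2b+2c+2+g)$. Since $E_3$ is not rigid, equality of Riemann schemes is not enough; one must either carry the accessory parameter through the convolution explicitly (tracking it via $y_0=r(r-1)(r-2)(2c+r+1+q+p)(a+2+\cdots)(2a+3+\cdots)$ in Theorem~\ref{etoar}, whose $r$-free part after $mc_{-(r+1)}$ and $X$-conjugation should reproduce $a_{00}$), or invoke the general principle that $mc$ and $\mathrm{Ad}$ are mutually inverse bijections on the relevant moduli and that the construction of \S\ref{SE6MCEar} started from $S\!E_3$ in the first place, so running it backward must return to the same point. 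The cleanest write-up uses the latter: since \S\ref{SE6MCEar} established $S\!E_6=\big(\text{left-divide by }\partial^\ell\big)\circ mc_{r+1}\circ\big(\partial^k\cdot\big)\circ\mathrm{Ad}(X)\,(S\!E_3)$, and each operation there is invertible with the inverse given in \S\ref{GenAdd}, composing the inverses in reverse order yields exactly the right-hand side of \eqref{SSE3_SE6}, so the identity holds by construction. The only thing to verify carefully is that the power of $\partial$ divided out on the way up ($\ell$, which is $3$ here since $k=3$ sufficed) matches the $\partial^{-3}$ written in \eqref{SSE3_SE6}, and that the left/right monomial factors $x^{-1}(x-1)^{-1}$ and $X$ are precisely those needed to restore the common-factor-free normalization $x^2(x-1)^2\partial^3+\cdots$ of $S\!E_3$; both are immediate from the explicit leading terms displayed in \S\ref{SE6MCEar} and \S\ref{TheTenTable}.
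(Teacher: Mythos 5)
Your proposal is correct and is essentially the paper's own argument: the proposition is just the inversion of the construction in \S\ref{SE6MCEar} (Theorem \ref{etoar}), where $S\!E_6$ was produced from $S\!E_3$ by $\mathrm{Ad}(X)$ (with $x(x-1)$ clearing the poles), left multiplication by $\partial^3$, and the substitution $\theta\to\theta-r-1$, so composing the inverses in reverse order — which also settles the accessory-parameter issue you rightly flag — gives \eqref{SSE3_SE6} exactly as the paper's three-step recipe does. One small bookkeeping slip: in the forward construction $k=3$ and $\ell=0$ (nothing is divided out on the way up, since the order rises from $3$ to $6$), so the $\partial^{-3}$ in \eqref{SSE3_SE6} undoes the left factor $\partial^{k}=\partial^{3}$, not a division by $\partial^{\ell}$.
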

In other words, given $u_3$ such that $S\!E_3u_3=0$, define $u_6$ as
$$u_6:=\partial^{-r-1}Xu_3;$$
then $S\!E_6u_6=0$, and conversely given $u_6$ such that $S\!E_6u_6=0$, define $u_3$ as
$$u_3:=X^{-1}\partial^{r+1}u_6;$$
then $S\!E_3u_3=0$.

\subsubsection{Recipe for computing shift operators}\label{recipe_3_to_6}
For a shift $sh:(\mathbf{a})\to sh(\mathbf{a})$ of
$\mathbf{a}=(a,b,c,g)$, let $S\!E_3(sh(\mathbf{a}))$
and $S\!E_6(sh(\mathbf{a}))$ be the shifted operators of
$S\!E_3(\mathbf{a})$ and $S\!E_6(\mathbf{a})$, respectively.

\begin{itemize}
\item 
Given $v_3$ such that $S\!E_3(sh(\mathbf{a}))v_3=0$, define $v_6$ as
$$v_6:=\partial^{-r-1}Xv_3;$$
then $S\!E_6(sh(\mathbf{a}))v_6=0$, and conversely given $v_6$
such that $S\!E_6(sh(\mathbf{a}))v_6=0$, define $v_3$ as
$$v_3:=X^{-1}\partial^{r+1}v_6;$$
then $S\!E_3(sh(\mathbf{a}))v_3=0$.

\item Let $P=P_3$ be the shift operator for the shift $sh$ of $S\!E_3$
$$P:{\rm Sol}(S\!E_3(\mathbf{a}))\to{\rm Sol}(S\!E_3(sh(\mathbf{a}))).$$ 
When $u_3$ solves $S\!E_3((\mathbf{a}))u_3=0$, 
$$v_3:=Pu_3\quad{\rm solves}\quad S\!E_3(sh(\mathbf{a}))v_3=0,$$ 
turning to $E_6$, 
$$v_6:= \partial^{-r-1}\cdot Xv_3\quad{\rm solves}\quad S\!E_6(sh(\mathbf{a}))v_6=0.$$ 
Rewriting $v_3=Pu_3$, we have
$$X^{-1}\cdot\partial^{r+1}v_6=P\cdot X^{-1}\cdot\partial^{r+1}u_6. $$
That is
\begin{equation}\label{mc}
  \partial^{r+1}v_6=X\cdot P\cdot X^{-1}\cdot\partial^{r+1}u_6.
\end{equation}

\item To apply the middle convolution formula:
  $$\partial^{-r}\circ \theta\circ\partial^{r}=\theta -r,$$
we proceed as follows:
\begin{itemize}
\item if $XPX^{-1}$ has poles (at $x=0,1$), multiply a power of $x$ and a power of $x-1$ from the left to both sides of \eqref{mc} to kill the  poles,
\item multiply from the left some powers of $\partial$ so that both sides can have $(\theta,\partial)$-forms,
\item replace $\theta$ by $\theta-r-1,$ and name the left-hand side operator as $R_2$ and the right one as $R_1$.
\end{itemize}
In this way, \eqref{mc} is transformed to 
$$R_2\ v_6 = R_1 \ u_6.$$

\item
  Apply Euclidean algorithm to
  $\mathbf{E}=S\!E_6(sh(\mathbf{a}))$ and $R_2$ ($\nu={\rm ord}R_2$). 
It goes :
  $$\begin{array}{lll}
  \mathbf{E}=q_1R_2+r_1,\quad&{\rm ord}(r_1)=\nu-1,
  \quad&r_1=x_1\mathbf{E}+y_1R_2,\quad y_1=-q_1,\\[2mm]
  R_2=q_2r_1+r_2,&{\rm ord}(r_2)=\nu-2,&r_2=x_2\mathbf{E}+y_2R_2,
  \quad y_2=1+q_2q_1,\\[2mm]
  r_1=q_3r_2+r_3,&{\rm ord}(r_3)=\nu-3,&r_3=x_3\mathbf{E}+y_3R_2,
  \quad y_3=y_1-q_3y_2,\\
\cdots\cdots&\cdots&\cdots\\
r_{\nu''}=q_\nu r_{\nu'}+r_\nu,&{\rm ord}(r_\nu)=0,
&r_\nu=x_\nu \mathbf{E}+y_\nu R_2,\quad y_\nu=y_{\nu''}-q_\nu y_{\nu'},
\end{array}$$
where $\nu'=\nu-1, \nu''=\nu-2$. Putting
$$B:=y_\nu/r_\nu,$$
since $\mathbf{E}v_6=0$, we have $$ v_6=BR_2\ v_6=BR_1\ u_6.$$ 
Thus we have the shift operator $P_6$ of $S\!E_6$ for the shift $sh$:
  $$P_6=BR_1\quad {\rm modulo}\quad S\!E_6(\mathbf{a}).$$
\end{itemize}

\subsubsection{Example $sh:a\to a+1$, $P_6=P_{a+}$}

\begin{itemize}
\item
  Since $X PX^{-1}=x^2(x-1)^2\partial^2+\cdots$
  where $P=P_{a+}$ for $S\!E_3$, multiply $\partial^2$ from the left and get $(\theta,\partial)$-form of $\partial^2X P X^{-1}.$ Thus we have
$$\partial^2\ v_6=\partial^{-r-1}\cdot\partial^2XPX^{-1}\cdot\partial^{r+1}\ u_6.$$
Substitute $\theta \to \theta-r-1$
in the $(\theta,\partial)$-form of the right-hand  side and we have
$$R_2v_6=R_1u_6,\quad R_2=\partial^2,\quad R_1=subs(\theta=\theta-r-1,\partial^2XPX^{-1}).$$
\item
Apply Euclidean algorithm to $\mathbf{E}=S\!E_6(a+1)$ and $R_2$ $(\nu={\rm ord}R_2=2.)$

It goes:
  $$\begin{array}{llll}
    \mathbf{E}&=q_1R_2+r_1,\quad&{\rm ord}(q_1)=4,{\rm ord}(r_1)=1,\quad&r_1=\mathbf{E}-q_1R_2,\\[2mm]
    R_2&=q_2r_1+r_2,&{\rm ord}(q_2)=1,{\rm ord}(r_2)=0,&r_2=-q_2\mathbf{E}+(1+q_2q_1)R_2,\end{array}$$
$$ 1=-r_2^{-1}q_2\mathbf{E}+BR_2,\quad {\rm where}\quad B:=r_2^{-1}(1+q_2q_1).$$
and so
$$v_6=BR_2\ v_6=BR_1\ u_6.$$ 
Thus we have the shift operator $P_6$ of $S\!E_6$ for the shift $a\to a+1$:
  $$P_6=BR_1\quad {\rm modulo}\quad S\!E_6(\mathbf{a})$$
$P_6=(x^3(x-1)^3\partial^5+\cdots)/C$,
where $C$ is a polynomial of $\mathbf{a}$.  
\end{itemize}

\subsubsection{Example $sh:a\to a-1$, $P_6=P_{a-}$}

\begin{itemize}
\item 
  Since $X PX^{-1}=(x-1)^2\partial^2+\cdots$
  where $P=P_{a-}$ for $S\!E_3$, multiply $\partial^2x^2$ from the left and get $(\theta,\partial)$-form of $\partial^2x^2X P X^{-1},$  and $\partial^2x^2=(\theta+1)(\theta+2)$.
Thus we have
$$\partial^{-r-1}(\theta+1)(\theta+2)\partial^{r+1}\ v_6=\partial^{-r-1}\cdot\partial^2x^2XPX^{-1}\cdot\partial^{r+1}\ u_6.$$
Substitute $\theta \to \theta-r-1,$
in the $(\theta,\partial)$-form of both sides and we have
$$R_2v_6=R_1u_6,\quad R_2=(\theta-r)(\theta-r+1),\quad R_1=subs(\theta=\theta-r-1,\partial^2x^2XPX^{-1}).$$

\item
Apply Euclidean algorithm to $\mathbf{E}=S\!E_6(a-1)$ and $R_2\ ({\rm ord}(R_2)=2)$. 
It goes exactly the same to the case of the previous subsubsection, and we get
the shift operator $P_6$ of $S\!E_6$ for the shift $a\to a-1$:
$$P_6=(x^3(x-1)^3\partial^5+\cdots)/C,$$ where $C$ is a polynomial of $\mathbf{a}$. 
\end{itemize}

\begin{remark} Theoretically the above recipe works well, however the 
operators appearing in the process and the resulting ones are extremely complicated.
Anyway since this recipe guarantees the existence of the shift operators,  we directly solve the shift equation $(EPQE)$ to get a shorter expression
in the next subsection.
\end{remark}

\subsection{Solving the shift equation $(EPQE)$} \label{secshift}

  Shift operators for $a\to a\pm1$ will be found in \S \ref{SE6Shifta}
  and \S \ref{SE6Shiftan}. 
For $b\rightarrow b\pm 1$, we can make use of the symmetry
\[(a, b, p, q, x, \partial)\longrightarrow (b, a, q, p, 1-x, -\partial).\]

For $c\to c\pm1$, we make a detour
$$(c, p, q)\underset{P_{q\pm}}\to(c, p, q\pm1)\underset{P_{p\pm}(q\pm1)}
\to (c, p\pm1, q\pm1)\Rightarrow (c\pm1, p, q),$$
and for $g\to g\mp2$,
$$(c, g)\Rightarrow(c\pm1, g\mp2)\underset{P_{c\mp}(c\pm1, g\mp2)}\to(c, g\mp2),$$
where the operators for $\Rightarrow$ are to be found.
So, in \S \ref{SE6Shiftcpq} and \ref{SE6Shiftcg}, it is enough to find shift operators
for $(c, p, q)\to(c\pm1, p\mp1, q\mp1)$ and $(c, g)\to(c\pm1, g\mp2)$.

\begin{remark}The obtained shift operators depend on the seven parameters $a,b,\dots,r$ polynomially. The most operators, however, have very long expressions. To make the expressions shorter, it is sometimes useful to use the parameters $e_1,\dots,e_9$ of $E_6$ related to $a,b,\dots,r$ as in Theorem \ref{etoar}. So the results are often written in terms of the $e_j$'s. These expressions may show the hidden symmetry of shift operators, but one must not forget that these expressions are not unique.\end{remark}
For a shift $sh$,
write the shifted operator by putting a letter $s$,
such as $T_s, T_{0s}, B_{1s}$, etc.
Our task is to solve the equation
\begin{equation} \label{tpqt}
 T_s\circ P = Q\circ T.
\end{equation}

\subsubsection{A preparation}

We put
\begin{eqnarray}\label{PQz}
\qquad P= x^2P_{nn} + xP_{n} + P_0 + P_1  \partial+P_2\partial^2, \quad
Q=x^2Q_{nn} + \cdots+Q_2\partial^2.
\end{eqnarray}
The equation $E_6$ is written
as $T:=T_0+T_1\partial+T_2\partial^2+T_3\partial^3$,
the coefficients are given in \eqref{eqT0},\dots,\eqref{eqT3}.
In the following we need to translate the variable $\theta$ and the 
parameters: We denote, say, $B_1[-1]$ for $B_1(\theta\to \theta-1)$ and
$B_2[3]$ for $B_2(\theta\to \theta+3)$. Writing both sides in  $(x,\theta,\partial)$-form, we get

\begin{lemma} \label{syseq}  
The equation \eqref{tpqt} is equivalent to the system of equations
\[E2=0,\  E1=0,\  E0=0,\  Ed1=0,\dots, \ Ed5=0,\]
where
\begin{eqnarray*}
E2 &=& T_{0s}[2] P_{nn}-Q_{nn} T_0,\\
E1 &=& T_{0s}[1] P_n +(\theta+2) T_{1s}[1] P_{nn} -(Q_n T_0+\theta Q_{nn}[-1] T_1[-1]), \\
E0 &=& T_{0s} P_0+(\theta+1) T_{1s} P_n+(\theta+1) (\theta+2) T_{2s} P_{nn} \\
&&\qquad        -(Q_0 T_0+\theta Q_n[-1] T_1[-1]+ \theta (\theta-1) Q_{nn}[-2] T_2[-2])), \\
Ed1 &=& T_{0s} P_1+T_{1s} P_0[1]+(\theta+2) T_{2s} P_n[1]+(\theta+2) (\theta+3) T_{3s} P_{nn}[1] \\
&&\qquad      -(Q_1 T_0[1]+Q_0 T_1+\theta Q_n[-1] T_2[-1]+\theta (\theta-1) Q_{nn}[-2] T_3[-2]),\\
Ed2 &=& T_{0s}P_2+T_{1s} P_1[1]+T_{2s} P_0[2]+(\theta+3) T_{3s} P_n[2]\\
    &&\qquad       -(Q_2T_0[2]+Q_1 T_1[1]+Q_0 T_2+\theta Q_n[-1] T_3[-1]), \\
Ed3 &=&T_{1s}P_2[1]+T_{2s}P_1[2]+ T_{3s} P_0[3]
      -(Q_2T_1[2]+Q_1 T_2[1]+Q_0 T_3), \\
Ed4 &=& T_{2s} P_2[2]+T_{3s} P_1[3]-(Q_2T_2[2]+Q_1 T_3[1]), \\
Ed5&=&T_{3s}P_2[3]-Q_2T_3[2].
\end{eqnarray*}
\end{lemma}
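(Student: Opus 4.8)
The plan is to expand both sides of the operator identity $T_s\circ P = Q\circ T$ using the $(\theta,\partial)$-form of $T$ given in \eqref{eqT}–\eqref{eqT3} and the ansatz \eqref{PQz} for $P$ and $Q$, then sort the resulting expression by powers of $\partial$ and by powers of $x$ (equivalently, by the two ``gradings'' that the $(\theta,\partial)$-calculus respects), and read off the coefficient equations. The key observation that makes this bookkeeping manageable is that $x=\partial^{-1}\theta=\, (\theta+1)\partial^{-1}$ in the Weyl-algebra sense, so that multiplying a $\theta$-polynomial by $x$ on the left shifts $\theta\mapsto\theta+1$ and lowers the $\partial$-degree by one; concretely, for any polynomial $f$, $x\circ f(\theta)=f(\theta+1)\circ x$ and $f(\theta)\circ\partial^{-1}=\partial^{-1}\circ f(\theta+1)$, and $\partial\circ f(\theta)=f(\theta+1)\circ\partial$. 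This is the source of the bracket notation $B_1[-1]$, $T_2[-2]$, etc.

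\textbf{The steps, in order.} First I would write $P=x^2P_{nn}+xP_n+P_0+P_1\partial+P_2\partial^2$, where $P_{nn},P_n,P_0,P_1,P_2$ are polynomials in $\theta$ (with coefficients polynomial in the parameters), and similarly for $Q$; the shape is dictated by the known order ($\le n-1=5$) of shift operators and by the structure of $T$, whose top $x$-degree is $x^3(x-1)^3$ contributing $x^{n_0}=x^3$ and whose coefficients $T_0,\dots,T_3$ carry $\partial^0,\dots,\partial^3$. Second, I would compute $Q\circ T=\sum_{i,j}Q_iT_j$-type products, pushing all the $x$'s and $\partial$'s to one side via the commutation rules above; each term $x^kQ_i\circ\partial^m T_j\partial^j$ becomes, after moving $\partial^m$ past $T_j$ (shifting $\theta$ by the appropriate amount, hence the $T_j[-1]$, $T_j[-2]$, $T_3[-2]$ appearing in $E0$, $Ed1$, $Ed2$) and collecting, a sum of terms $x^{\bullet}(\theta\text{-poly})\partial^{\bullet}$. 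Third, I would do the same for $T_s\circ P$, where now the shifted coefficients $T_{0s},T_{1s},\dots$ appear and the $x$'s inside $P$ get pushed left, producing the shifts $P_0[1]$, $P_n[1]$, $P_{nn}[1]$, $P_0[2]$, $P_0[3]$, etc. Fourth, I would equate the two expansions: since $\{x^a\partial^b\}$ (or equivalently $\{x^a\theta^b\}$) is a basis, the identity $T_sP=QT$ holds if and only if, for each pair $(a,b)$, the $\theta$-polynomial coefficients match. Grouping by the net power of $\partial$ on the right — $\partial^2,\partial^1,\partial^0,\partial^{-1},\dots,\partial^{-5}$ — gives exactly the eight blocks $E2,E1,E0,Ed1,\dots,Ed5$; within each block one still has the $x$-degree as a further index, but the statement packages the whole block as a single operator identity $E\!k=0$, which is legitimate because $E\!k$ is itself written in $(\theta,\partial)$-form.

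\textbf{The main obstacle} is purely organizational rather than conceptual: correctly tracking how many times each $\theta$ gets shifted as $x$'s and $\partial$'s are transported past the various factors, so that the arguments of the brackets $[\pm1],[\pm2],[\pm3]$ come out right, and making sure no cross-term between, say, $x^2P_{nn}$ and $T_{3s}\partial^3$ is dropped or double-counted. I would verify the derivation by checking the two extreme blocks in isolation — $E2$, which should only see the top pieces $T_{0s}[2]P_{nn}$ against $Q_{nn}T_0$ (coefficient of $x^{2+3}\partial^{0}$ on the left versus $x^{2}\cdot x^{3}\partial^{0}$ on the right, both the $\partial^{0}$-part of $T$), and $Ed5=T_{3s}P_2[3]-Q_2T_3[2]$, which is the coefficient of $\partial^{5}$ and involves only the $\partial^3$-parts $T_3,T_{3s}$ against the $\partial^2$-parts $P_2,Q_2$ — and then confirming that the intermediate blocks interpolate consistently. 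Once the expansion is laid out this way the equivalence is immediate, since each direction is just ``collect coefficients in a basis.''
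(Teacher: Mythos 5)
Your overall plan is exactly the paper's proof: the paper simply rewrites $T_s\circ P$ and $Q\circ T$ in $(x,\theta,\partial)$-normal form $x^2A_{nn}+xA_n+A_0+A_1\partial+\cdots+A_5\partial^5$ and equates coefficients, which is the graded decomposition you describe (eight blocks, Euler degrees $+2,+1,0,-1,\dots,-5$), and the equivalence is indeed immediate once both sides are so expanded.

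However, the commutation identities you state for $x$ — the very rules on which all the bracket shifts in the lemma depend — are reversed. With $\theta=x\partial$ one has $\partial x=\theta+1$, hence
\[
x=\theta\,\partial^{-1}=\partial^{-1}(\theta+1),\qquad f(\theta)\circ x=x\circ f(\theta+1),\ \text{i.e.}\ x\circ f(\theta)=f(\theta-1)\circ x,
\]
not $x=\partial^{-1}\theta=(\theta+1)\partial^{-1}$ and not $x\circ f(\theta)=f(\theta+1)\circ x$ as you wrote (your rules for $\partial$ and $\partial^{-1}$ are correct). If you applied your stated $x$-rule literally, the degree-$(+2)$ block of $T_s\circ P$ would come out as $T_{0s}(\theta)\,x^2P_{nn}=x^2\,T_{0s}[-2]P_{nn}$, i.e. $E2=T_{0s}[-2]P_{nn}-Q_{nn}T_0$, contradicting the lemma and also contradicting your own sanity check, where you correctly quote $T_{0s}[2]P_{nn}$; the same sign flip would corrupt every shifted factor $P_\ast[k]$, $T_\ast[-k]$, $Q_\ast[-k]$ in $E1,E0,Ed1,\dots$. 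With the corrected identities the bookkeeping does produce the stated system (e.g. $T_{0s}(\theta)x^2=x^2T_{0s}(\theta+2)$ gives $E2$, and $x^2Q_{nn}T_1\partial=x\,\theta\,Q_{nn}[-1]T_1[-1]$ gives the second term of $E1$), so the fix is local, but as written the key computational rule would fail and must be repaired before the "collect coefficients" step yields the lemma.
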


\subsubsection{Shift operators for $a \rightarrow a+1$}\label{SE6Shifta}

Due to the correspondence among parameters as in Theorem \ref{etoar}, the shift $a \rightarrow a+1$
in terms of $e_i$ is expressed as
\[sh_{a+}:= \{e_2\to e_2+1,\ e_3\to e_3+2,\ e_8\to e_8-1,\ e_9\to e_9-2\}.\]
The shifted equation of $E_6$ 
is $T_s =T_{0s}+\cdots+T_{3s}\partial^3,$ where
\begin{eqnarray} 
\quad  T_{0s} &=& (\theta-r)(\theta+1-r)(\theta+2-r)(\theta+e_7)(\theta+e_8-1)(\theta+e_9-2), \label{eqT0s}\\
\quad   T_{1s} &=& (\theta+1-r)(\theta+2-r)B_{1s}, \quad  B_{1s}=B_1(sh_{a+})\label{eqT1s}\\
\quad   T_{2s} &=& (\theta+2-r)B_{2s}, \quad B_{2s}=B_2(sh_{a+})\label{eqT2s}\\
\quad   T_{3s} &=& -(\theta+3-e_1)(\theta+2-e_2)(\theta+1-e_3).\label{eqT3s}
\end{eqnarray}

\begin{enumerate}[leftmargin=*]
\item Rewrite equation $Ed5=0$, by use of \eqref{eqT3} and \eqref{eqT3s}
and get 
\[ (\theta+3-e_1)(\theta+2-e_2)(\theta+1-e_3)P_2[3]=Q_2(\theta+5-e_1)(\theta+5-e_2)(\theta+5-e_3).\]
Since degree$(P_2,\theta)=$degree$(Q_2,\theta)=3$, there is a constant $K$ such that
\[\def\arraystretch{1.2}\setlength\arraycolsep{3pt} \begin{array}{lcl}
   P_2 &=& K(\theta+2-e_1)(\theta+2-e_2)(\theta+2-e_3), \\
   Q_2 &=& K(\theta+3-e_1)(\theta+2-e_2)(\theta+1-e_3).
\end{array}
\]

\item Rewrite equation $Ed4=0$, by use of \eqref{eqT2}, \eqref{eqT3},
\eqref{eqT2s}, and \eqref{eqT3s} and get
\begin{eqnarray*}
&(\theta+3-e_1)(\theta+2-e_2)(\theta+1-e_3)(P_1[3] + K(\theta+4-r)B_2[2]) & \\
&= (\theta+4-e_1)(\theta+4-e_2)(\theta+4-e_3)(Q_1 + K(\theta+2-r)B_{2s}). &
\end{eqnarray*}
Hence, we can set, for some linear form $X$ of $\theta$, as
\[\def\arraystretch{1.2}\setlength\arraycolsep{3pt} \begin{array}{lcl}
 P_1 &=& -K(\theta+1-r)B_2[-1]+(\theta+1-e_1)(\theta+1-e_2)(\theta+1-e_3)X[-3], \\
 Q_1 &=& - K(\theta+2-r)B_{2s} + (\theta+3-e_1)(\theta+2-e_2)(\theta+1-e_3)X.
\end{array}
\]

\item Rewrite $Ed3=0$ and we get
\begin{eqnarray*}
&& (\theta+2-e_2)(\theta+1-e_3)[P_0[3]+K(\theta+3-r)(\theta+4-r)B_1[2]
+(\theta+3-r)B_2[1]X] \\
&& =  (\theta+3-e_2)(\theta+3-e_3)[Q_0 + K(\theta+1-r)(\theta+2-r)B_{1s}
 + (\theta+2-r)B_{2s}X[-1]].
\end{eqnarray*}
Hence, we can set 
\[\def\arraystretch{1.2}\setlength\arraycolsep{3pt} \begin{array}{lcl}
P_0 &=& - K(\theta-r)(\theta+1-r)B_1(\theta-1) - (\theta-r)B_2(\theta-2)X[-3] \\
  &&\quad           + N[-3](\theta-e_2)(\theta-e_3), \\
Q_0 &=& - K(\theta+1-r)(\theta+2-r)B_{1s} - (\theta+2-r)B_{2s}X[-1])  \\
  &&\quad           + N[-3](\theta+2-e_2)(\theta+1-e_3),
\end{array}\]
for some factor $N$.

\item Rewrite $E2=0$ by use of \eqref{eqT0} and \eqref{eqT0s} to get
\begin{eqnarray*}
&  (\theta+2-r)(\theta+3-r)(\theta+4-r)(\theta+e_7+2)(\theta+e_8+1)(\theta+e_9)P_{nn} &\\
& = (\theta-r)(\theta+1-r)(\theta+2-r)(\theta+e_7)(\theta+e_8)(\theta+e_9)Q_{nn}. &
\end{eqnarray*}
Since degree$(P_{nn},\theta)=$degree$(Q_{nn},\theta)=5$, 
there exist constants $M_1$ and $M_0$ such that
\[\def\arraystretch{1.2}\setlength\arraycolsep{3pt} \begin{array}{lcl}
  P_{nn}&=& (M_1\theta+M_0)(\theta-r)(\theta+1-r)(\theta+e_7)(\theta+e_8), \\
  Q_{nn}&=& (M_1\theta+M_0)(\theta+3-r)(\theta+4-r)(\theta+e_7+2)(\theta+e_8+1).
\end{array}\]

\item Rewrite $E1=0$ by use of \eqref{eqT0}, \eqref{eqT1}, \eqref{eqT0s}
and \eqref{eqT1s} and we get
\begin{eqnarray*}
& (\theta+3-r)(\theta+e_7+1)[(\theta+e_9-1)P_{n} -\theta(\theta-r)B_1[-1](M_1(\theta-1)+M_0)]& \\
& =(\theta-r)(\theta+e_7)[(\theta+e_9)Q_{n} -(\theta+2)(\theta+3-r)B_{1s}[1](M_1\theta+M_0)].
\end{eqnarray*}
Hence, we can see that 
$P_n$ is divisible by $\theta-r$ and $Q_n$ is divisible by $\theta+3-r$: we set,
\begin{equation}\label{CPQ}
  P_{n}=(\theta-r){{C}\kern-.2em{P}}4, \qquad    Q_{n}=(\theta+3-r){{C}\kern-.1em{Q}}4,
  \end{equation}
where ${{C}\kern-.2em{P}}4$ and ${{C}\kern-.1em{Q}}4$ are polynomials
  of $\theta$ of degree at most $4$ and, thus, we get a relation
\begin{equation}\label{CP4Q4}
  \begin{array}{c}
    (\theta+e_7+1)((\theta+e_9-1){{C}\kern-.2em{P}}4 -\theta B_1[-1](M_1(\theta -1)+M_0)) \\
    \noalign{\smallskip}
    =(\theta +e_7)((\theta +e_9){{C}\kern-.1em{Q}}4 -(\theta +2)B_{1s}[1](M_1\theta +M_0)).
    \end{array}
  \end{equation}
The factors ${{C}\kern-.2em{P}}4$ and ${{C}\kern-.1em{Q}}4$ will be determined later.  

\item Rewrite $Ed2=0$, by \eqref{CPQ}, we get an identity:
\begin{equation} \label{CP4Q4n}
  \begin{array}{l}
 (\theta +1-e_3)[(\theta +3)(\theta +3-e_1){{C}\kern-.2em{P}}4[2]  \\ \noalign{\smallskip}
      \quad + K(\theta +3-e_1)(\theta +3-r)(\theta +4-r)(\theta +e_7+2)\\
      \quad \hskip36pt \times(\theta +e_8+2)(\theta +e_9+2)  \\
      \noalign{\smallskip}
 \quad + (\theta +3-e_1)(\theta +3-r)XB_1[1]+ N[-1]B_2]  \\ \noalign{\smallskip}
= (\theta +2-e_3)[\theta (\theta +2-e_1){{C}\kern-.1em{Q}}4[-1]  \\ \noalign{\smallskip}
  \quad +K(\theta +2-e_1)(\theta -r)(\theta +1-r)(\theta +e_7)(\theta +e_8-1)(\theta +e_9-2)  \\
  \noalign{\smallskip}
  \quad +(\theta +2-e_1)(\theta +1-r)B_{1s}X[-2]+ NB_{2s}].
\end{array}
\end{equation}

\end{enumerate}

Writing as $C\!P4 = p_4\theta ^4+\cdots+p_0$, $C\!Q4 = q_4\theta ^4+\cdots+q_0$,
we now have the forms of all $P$'s and $Q$'s 
and we substitute them into the equations
$Ed1=0$, $E0=0$, \eqref{CP4Q4}, and \eqref{CP4Q4n}
to get the values of
$\{K, M_1, M_0, X, N, p_4, \dots, p_0, q_4, \dots, q_0\}$.
Here, we solve them by assuming $N$ is a constant.
The result is as follows: 
\begin{eqnarray*} 
P_{nn}&=&(M_1\theta +M_0)(\theta -r)(\theta +1-r)(\theta +e_7)(\theta +e_8), \\
Q_{nn}&=&(M_1\theta +M_0)(\theta +3-r)(\theta +4-r)(\theta +e_7+2)(\theta +e_8+1), \\
P_0&=&-K(\theta -r)(\theta +1-r)B_1[-1]-(\theta -r)X[-3]B_2[-2]+ N(\theta -e_2)(\theta -e_3), \\
Q_0&=&-K(\theta +1-r)(\theta +2-r)B_{1s}-(\theta +2-r)X[-1]B_{2s} \\
&&\qquad +N(\theta +2-e_2)(\theta +1-e_3), \\
P_1&=&-K(\theta +1-r)B_2[-1]+ (\theta +1-e_1)(\theta +1-e_2)(\theta +1-e_3)X[-3], \\
Q_1&=&- K(\theta +2-r)B_{2s} + (\theta +3-e_1)(\theta +2-e_2)(\theta +1-e_3)X, \\      
P_2&=&K(\theta +2-e_1)(\theta +2-e_2)(\theta +2-e_3), \\
Q_2&=&K(\theta +3-e_1)(\theta +2-e_2)(\theta +1-e_3), 
\end{eqnarray*}
where
\begin{eqnarray*}
K &=& -e_1e_8+a(e_9+p+q+r)-(4a+3b+4c+2g+q+6), \\
M_1 &=& (e_8-1)(e_9-r-1), \\
M_0 &=& (e_8-1)((e_9-1)(e_9-r)+r(r-2)), \\
N &=& r(r-1)(e_8-1)(e_9-1)(e_9-2), \\
X &=&  -M_1\theta +(e_8-1)(-(r+2)e_9+4r+2).
\end{eqnarray*}
Instead of writing $P_n$ and $Q_n$ explicitly by use of
  $p_4, \dots, q_0$, 
by referring to \eqref{CPQ}, we give expressions
\begin{eqnarray*}
&&(\theta +1)(\theta +1-e_1){{C}\kern-.2em{P}}4 = N((\theta -e_3)U_2[-3]-B_2[-2]) \\
&&\qquad\qquad   - K(\theta +1-e_1)(\theta +1-r)(\theta +2-r)(\theta +e_7)(\theta +e_8)(\theta +e_9) \\
&&\qquad\qquad    - (\theta +1-e_1)(\theta +1-r)X[-2]B_1[-1], \\
\noalign{\smallskip}
&&(\theta +1)(\theta +3-e_1){{C}\kern-.1em{Q}}4 = N((\theta +2-e_3)U_2-B_{2s}[1]) \\
&&\qquad\qquad    -K(\theta +3-e_1)(\theta +1-r)(\theta +2-r)(\theta +e_7+1)(\theta +e_8)(\theta +e_9-1) \\
&&\qquad\qquad         -(\theta +3-e_1)(\theta +2-r)X[-1]B_{1s}[1],
\end{eqnarray*}
where
\[U_2:=(\theta +2-e_2)(\theta +2+e_9) + K + e_1e_8.\]
Thus, we can compute ${C}\kern-.2em{P}4$ and ${C}\kern-.2em{Q}4$.
\begin{remark}To make the expression shorter, we also used notation for the exponents $$e_1=p+r+1,\dots,e_9=-2(a+b+c)-p-q-r-g-3.$$
\end{remark}

\begin{remark} We need much space to give expressions of $P_n$ and $Q_n$,
    which are therefore not in this paper; They are listed
    in the file named {\it SE6PQ.txt} in the site mentioned in Introduction.
\end{remark} 

\subsubsection{Shift operators for $a \rightarrow a-1$} \label{SE6Shiftan}

We can compute the shift operators $P_{a-}$ and $Q_{a-}$
for $a\rightarrow a-1$, similarly. The result is the following.
\begin{eqnarray*}
  P_{nn}&=&M(\theta -r)(\theta +1-r)(\theta +e_7)(\theta +e_8)(\theta +e_9), \\
  Q_{nn}&=&M(\theta +3-r)(\theta +4-r)(\theta +e_7+2)(\theta +e_8+3)(\theta +e_9+4), \\
  P_n&=&M\theta (\theta -r)B_1[-1]+(\theta -r)(\theta +e_7)(\theta +e_8)(\theta +e_9)X, \\
  Q_n&=&M(\theta +2)(\theta +3-r)B_{1s}[1]\\
  && \qquad +(\theta +3-r)(\theta +e_7+1)(\theta +e_8+2)(\theta +e_9+3)X, \\
  P_0&=&\theta X[-1]B_1[-1]+M\theta (\theta -1)B_2[-2]+N(\theta +e_8)(\theta +e_9), \\
  Q_0&=&(\theta +1)X B_{1s}+M(\theta +1)(\theta +2)B_{2s}+N(\theta +e_8+1)(\theta +e_9+2), \\   
  P_2&=&(K_1(\theta -3)+K_0)(\theta +2-e_1)(\theta +2-e_2), \\
  Q_2&=&(K_1\theta +K_0)(\theta +3-e_1)(\theta +4-e_2), 
\end{eqnarray*}
where
\begin{eqnarray*}
K_1 &=& -(e_2-r-1)(e_3-2r-1), \\
K_0 &=& (e_2-r-1)((e_3-r-3)^2+3r-4), \\
M &=& 2a^2+2ab+3ag-pa-qa+gb-2c^2+cg-3cp-cq\\
&&\qquad +g^2-p^2-qp-4c+g-3p-q-2, \\
N &=& r(r-1)(e_2-r-1)(e_3-r-1)(e_3-r-2), \\
X &=& -K_1\theta  -(r-1)(e_2-r-1)(2e_3-3r-2).
\end{eqnarray*}
$P_1$ and $Q_1$ are given as
\begin{eqnarray*}
&& (\theta -r)(\theta +e_7)P_1=  N(B_1+(\theta +e_9+1)V_2) + \theta X[-1]B_2[-1](\theta +e_7)  \\
&&\qquad  + \theta (\theta -1)(\theta +1-e_1)(\theta +1-e_2)(\theta +1-e_3)M(\theta +e_7) \\
  \noalign{\smallskip}
  && (\theta +3-r)(\theta +e_7+1)Q_1 =  N(B_{1s}+(\theta +e_9+2)V_2)\\
  && \qquad + (\theta +2)X[1]B_{2s}(\theta +e_7+1) \\
&&\qquad + (\theta +2)(\theta +3)(\theta +3-e_1)(\theta +4-e_2)(\theta +5-e_3)M(\theta +e_7+1),
\end{eqnarray*}
where
\[ V_2= (\theta +e_8)(\theta -e_3+4)+(e_2-r-2)(e_7+r-2)-M+b-c. \]

\subsubsection{Shift operators for $(c, p, q)\rightarrow  (c\pm 1, p\mp 1, q\mp 1)$}\label{SE6Shiftcpq}

The shift 
$(c, p, q)\rightarrow (c+1, p-1, q-1)$ in terms of parameters $e_i$ is given as
\[shcpq:=\{e_1\to e_1-1,\, e_3\to e_3+1,\, e_4\to e_4-1,\, e_6\to e_6+1\}.\]
The shifted  equation is denoted by $T_s =T_{0s}+\cdots+T_{3s}\partial^3,$ as in the previous subsubsection.
The procedure to get the components $P_{nn}$, $Q_{nn}$, $\dots$ is
similar to that in \S \ref{SE6Shifta}. A system of equations is
the same as in Lemma \ref{syseq}. We give only a sketch of computation.

\begin{enumerate}[leftmargin=*]

\item Equation $Ed5=0$ is written as
\[(\theta +4-e_1)(\theta +3-e_2)(\theta +2-e_3)P_2[3]=(\theta +5-e_1)(\theta +5-e_2)(\theta +5-e_3)Q_2,\]
and this shows that
\[\def\arraystretch{1.2}\setlength\arraycolsep{3pt} \begin{array}{lcl}
   P_2&=& K(\theta +2-e_1)(\theta +2-e_2)(\theta +2-e_3), \\
   Q_2&=& K(\theta +4-e_1)(\theta +3-e_2)(\theta +2-e_3),
\end{array}\]
for a constant $K$.

\item Equation $E2=0$ implies
\begin{eqnarray*}
&&(\theta +2-r)(\theta +3-r)(\theta +4-r)(\theta +e_7+2)(\theta +e_8+2)(\theta +e_9+2)P_{nn} \\
&&\quad  = (\theta -r)(\theta +1-r)(\theta +2-r)(\theta +e_7)(\theta +e_8)(\theta +e_9)Q_{nn},
\end{eqnarray*}
which implies existence of a constant $M$ such that
\[\def\arraystretch{1.2}\setlength\arraycolsep{3pt} \begin{array}{lcl}
  P_{nn} &=& M(\theta -r)(\theta +1-r)(\theta +e_7)(\theta +e_8)(\theta +e_9), \\
  Q_{nn} &=& M(\theta +3-r)(\theta +4-r)(\theta +e_7+2)(\theta +e_8+2)(\theta +e_9+2).
\end{array}
\]

\item Equation $E1=0$ implies
\begin{eqnarray*}
&& (\theta +1-r)(\theta +2-r)(\theta +3-r)(\theta +e_7+1)(\theta +e_8+1)(\theta +e_9+1)P_n \\
&&\quad  +(\theta +2)(\theta +2-r)(\theta +3-r)B_{1s}[1]\\
&&\qquad  \times M(\theta -r)(\theta +1-r)(\theta +e_7)(\theta +e_8)(\theta +e_9) \\
&& =(\theta -r)(\theta +1-r)(\theta +2-r)(\theta +e_7)(\theta +e_8)(\theta +e_9)Q_n \\
&&\quad   +\theta (\theta -r)(\theta +1-r)B_1[-1]\\
&& \qquad  \times M(\theta +2-r)(\theta +3-r)(\theta +e_7+1)(\theta +e_8+1)(\theta +e_9+1),
\end{eqnarray*}
which leads to
\begin{eqnarray*}
&& (\theta +e_7+1)(\theta +e_8+1)(\theta +e_9+1)(\theta +3-r)
    (P_n  - M\theta (\theta -r)B_1[-1]) \\
&&\quad  =(\theta +e_7)(\theta +e_8)(\theta +e_9)(\theta -r)
    (Q_n  - M(\theta +2)(\theta +3-r)B_{1s}[1])
\end{eqnarray*}
Hence, for a linear factor $X$, we have 
\begin{eqnarray*}
 P_n &=& M \theta (\theta -r)B_1[-1] + (\theta +e_7)(\theta +e_8)(\theta +e_9)(\theta -r)X, \\
 Q_n &=& M (\theta +2)(\theta +3-r)B_{1s}[1] + (\theta +e_7+1)(\theta +e_8+1)(\theta +e_9+1)\\
 &&\qquad\qquad \times (\theta +3-r)X.
\end{eqnarray*}

\item Equation $Ed4=0$ is 
\begin{eqnarray*}
&& K (\theta +2-r)B_{2s}(\theta +4-e_1)(\theta +4-e_2)(\theta +4-e_3) \\
&&\quad    -(\theta +4-e_1)(\theta +3-e_2)(\theta +2-e_3)P_1[3] \\
&& = K (\theta +4-r)B_2[2](\theta +4-e_1)(\theta +3-e_2)(\theta +2-e_3) \\
&&\quad   -(\theta +4-e_1)(\theta +4-e_2)(\theta +4-e_3)Q_1.
\end{eqnarray*}
From this, we conclude the existence of the factor $V$
of degree at  most 3 such that
\begin{eqnarray*}
  P_1 &=& (\theta +1-e_2)(\theta +1-e_3)V[-3]-K (\theta +1-r)B_2[-1], \\
  Q_1 &=& (\theta +3-e_2)(\theta +2-e_3)V -K (\theta +2-r)B_{2s}.
\end{eqnarray*}

\item Equation $Ed3=0$ is 
\begin{eqnarray*}
&& K(\theta +1-r)(\theta +2-r)B_{1s}(\theta +3-e_1)(\theta +3-e_2)(\theta +3-e_3) \\
&&\quad  +(\theta +2-r)B_{2s}((\theta +3-e_2)(\theta +3-e_3)V[-1]-K(\theta +3-r)B_2[1]) \\
&&\quad  -(\theta +4-e_1)(\theta +3-e_2)(\theta +2-e_3)P_0[3] \\
&&= K(\theta +3-r)(\theta +4-r)B_1[2](\theta +4-e_1)(\theta +3-e_2)(\theta +2-e_3) \\
&&\quad  +(\theta +3-r)B_2[1]((\theta +3-e_2)(\theta +2-e_3)V -K(\theta +2-r)B_{2s}) \\
&&\quad  -(\theta +3-e_1)(\theta +3-e_2)(\theta +3-e_3)Q_0.
\end{eqnarray*}
Computation shows that 
\begin{eqnarray*}
&&  (\theta +3-e_3)[(\theta +3-e_1)Q_0\\
&&\qquad   + K(\theta +1-r)(\theta +2-r)B_{1s}(\theta +3-e_1)+ (\theta +2-r)B_{2s}V[-1]] \\
  && = (\theta +2-e_3)[(\theta +4-e_1)P_0[3]\\
&&\qquad    + K(\theta +3-r)(\theta +4-r)B_1[2](\theta +4-e_1)+ (\theta +3-r)B_2[1]V],
\end{eqnarray*}
from which we conclude the existence of a factor $W$ such that
\begin{eqnarray}
&& (\theta +4-e_1)P_0[3]+ K(\theta +3-r)(\theta +4-r)(\theta +4-e_1)B_1[2] \label{w2} \\
  &&\qquad  + (\theta +3-r)B_2[1]V=(\theta +3-e_3)W, \nonumber \\
  \noalign{\smallskip}
  && (\theta +3-e_1)Q_0+ K(\theta +1-r)(\theta +2-r)(\theta +3-e_1)B_{1s} \label{w1} \\
&& \qquad + (\theta +2-r)B_{2s}V[-1]=(\theta +2-e_3)W. \nonumber
\end{eqnarray}
These identities will be used to get expressions of $P_0$
and $Q_0$. 

\item Now, we have fixed the expressions of $P_{nn}$, $Q_{nn}$, $\dots$,
and we solve the remaining equations 
$Ed3=0$, $Ed2=0$, $Ed1=0$ and $E0=0$, by computer assistance.  
The result is the following:
\begin{eqnarray*}
 P_2&=& K(\theta +2-e_1)(\theta +2-e_2)(\theta +2-e_3), \\
 Q_2 &=&K(\theta +4-e_1)(\theta +3-e_2)(\theta +2-e_3), \\
 P_{nn} &=& M(\theta -r)(\theta +1-r)(\theta +e_7)(\theta +e_8)(\theta +e_9), \\
 Q_{nn} &=& M(\theta +3-r)(\theta +4-r)(\theta +e_7+2)(\theta +e_8+2)(\theta +e_9+2), \\
 P_n &=& M \theta (\theta -r)B_1[-1] -K(\theta -r)(\theta +2-2r)(\theta +e_7)(\theta +e_8)(\theta +e_9), \\
 Q_n &=& M (\theta +2)(\theta +3-r)B_{1s}[1] - K(\theta +3-r)(\theta +2-2r)(\theta +e_7+1)\\
 &&\times(\theta +e_8+1)(\theta +e_9+1), \\
  P_1 &=& -K (\theta +1-r)B_2[-1]+(\theta +1-e_2)(\theta +1-e_3)V[-3], \\
  Q_1 &=& -K (\theta +2-r)B_{2s}+(\theta +3-e_2)(\theta +2-e_3)V,
\end{eqnarray*}
where
\begin{eqnarray*}
  K &=& e_1-r-1, \\
  M &=& e_1+e_4-2r-2, \\
  V &=& -M(\theta +2)(\theta +3-K)+Kr(M+r-1).
\end{eqnarray*}
$P_0$ and $Q_0$ are given by solving (\ref{w2}) and (\ref{w1}), where
$$W = r(r-1)(e_1-r-1)W_1,$$ 

\[\begin{array}{l}
 W_1= (\theta -e_2+3)(\theta -M-r+3)(\theta -M-r+2) \\
 \hskip40pt      - (e_4-r-1)(M+e_7+r-1)(M+e_9+r-1)/2.
\end{array}\]

\end{enumerate}

We here give a remark on the shift $(c,p,q)\rightarrow (c-1,p+1,q+1)$,
which is equal to the shift
\[\{e_1\to e_1+1, e_3\to e_3-1, e_4\to e_4+1, e_6\to e_6-1\}.\]
Since this is equal to the shift $shcpq$ above up to
the exchange $\{e_1\leftrightarrow e_3, e_4\leftrightarrow e_6\}$, 
and the equation $E_6$ is invariant relative to this exchange,
the corresponding shift operators for $S\!E_6$ are obtained
from the above operators by this exchange.

\subsubsection{Shift operators for $(c, g)\rightarrow (c\pm 1, g\mp 2)$}\label{SE6Shiftcg}

We consider the shift $(c, g)\rightarrow (c+1, g-2)$, which is equal to
\[shcg:=\{e_2\to e_2+1,\, e_5\to e_5+1,\,  e_7\to e_7-2\}.\]
The shift operators are denoted by $P_{cg}$ and $Q_{cg}$.
We follow the procedure of getting the components $P_{nn}$, $Q_{nn}$, $\dots$ 
as before.

\begin{enumerate}[leftmargin=*]
\item Equation $Ed5=0$ implies 
\[\def\arraystretch{1.2}\setlength\arraycolsep{3pt} \begin{array}{lcl}
   P_2 &=& K(\theta +2-e_1)(\theta +2-e_2)(\theta +2-e_3), \\
   Q_2 &=& K(\theta +3-e_1)(\theta +2-e_2)(\theta +3-e_3),
\end{array}\]
for some constant $K$.

\item Equation $E2=0$ implies
\begin{eqnarray*}
&  (\theta +2-r)(\theta +3-r)(\theta +4-r)(\theta +e_7)(\theta +e_8+2)(\theta +e_9+2)P_{nn}& \\
&  = (\theta -r)(\theta +1-r)(\theta +2-r)(\theta +e_7)(\theta +e_8)(\theta +e_9)Q_{nn},&
\end{eqnarray*}
which reduces to
\begin{eqnarray*}
&&  (\theta +3-r)(\theta +4-r)(\theta +e_8+2)(\theta +e_9+2)P_{nn} \\
&&\quad  = (\theta -r)(\theta +1-r)(\theta +e_8)(\theta +e_9)Q_{nn}.
\end{eqnarray*}
Hence, for some constants $M_1$ and $M_0$,
\[\def\arraystretch{1.2}\setlength\arraycolsep{3pt} \begin{array}{lcl}
  P_{nn} &=& (M_1\theta +M_0)(\theta -r)(\theta +1-r)(\theta +e_8)(\theta +e_9), \\
  Q_{nn} &=& (M_1\theta +M_0)(\theta +3-r)(\theta +4-r)(\theta +e_8+2)(\theta +e_9+2).
\end{array}\]

\item Equation $E1=0$ is rewritten as 
\begin{eqnarray*}
&& (\theta +3-r)(\theta +e_8+1)(\theta +e_9+1) \\
&&\quad \times ((\theta +e_7-1)P_n - \theta (\theta -r)B_1[-1](M_1(\theta -1)+M_0))\\
&& =(\theta -r)(\theta +e_8)(\theta +e_9) \\
&&\quad \times ((\theta +e_7)Q_n -(\theta +2)(\theta +3-r)B_{1s}[1](M_1\theta +M_0)),
\end{eqnarray*}
which implies that $P_n$ is divisible by $(\theta -r)$ and $Q_n$
by $(\theta +3-r)$, and we set 
\[ P_{n}=(\theta -r){{C}\kern-.2em{P}}4, \qquad    Q_{n}=(\theta +3-r){{C}\kern-.1em{Q}}4,\]
as in \eqref{CPQ}; then for a polynomial $U_3$ of degree at most 3, we have
\begin{equation} \label{u3pq}
  \begin{array}{c}
    (\theta +e_7-1){{C}\kern-.2em{P}}4 = \theta (M_1(\theta -1)+M_0)B_1[-1]+(\theta +e_8)(\theta +e_9)U_3, \\
    \noalign{\smallskip}
    (\theta +e_7){{C}\kern-.1em{Q}}4 = (\theta +2)(M_1\theta +M_0)B_{1s}[1] +(\theta +e_8+1)(\theta +e_9+1)U_3.
  \end{array}
  \end{equation}

\item Equation $Ed4=0$  is rewritten as
\begin{eqnarray*}
&& (\theta +4-e_1)(\theta +4-e_2)(\theta +4-e_3)(Q_1+K(\theta +2-r)B_{2s}) \\
&& = (\theta +3-e_1)(\theta +2-e_2)(\theta +3-e_3)(P_1[3]+K(\theta +4-r)B_2[2]).
\end{eqnarray*}
Hence, there is a linear factor $X$ such that
\[\def\arraystretch{1.2}\setlength\arraycolsep{3pt} \begin{array}{lcl}
 P_1 &=& -K(\theta +1-r)B_2[-1] + (\theta +1-e_1)(\theta +1-e_2)(\theta +1-e_3)X[-3], \\
 Q_1 &=&  -K(\theta +2-r)B_{2s} + (\theta +3-e_1)(\theta +2-e_2)(\theta +3-e_3)X.
\end{array}\]

\item Equation $Ed3=0$ is rewritten as 
\begin{eqnarray*}
&&  (\theta +2-e_2)(P_0[3]+K(\theta +3-r)(\theta +4-r)B_1[2] +(\theta +3-r)X B_2[1])\\
&&= (\theta +3-e_2)(Q_0+K(\theta +1-r)(\theta +2-r)B_{1s}+(\theta +2-r)X[-1]B_{2s}).
\end{eqnarray*}
Hence, by consideration of degree, we see that there exists
a polynomial $V$ in $\theta $ of degree at most 4
so that $P_0$ and $Q_0$ are expressed as
\[\def\arraystretch{1.2}\setlength\arraycolsep{3pt} \begin{array}{lcl}
  P_0&=& -K(\theta -r)(\theta +1-r)B_1[-1]-(\theta -r)X[-3]B_2[-2]+(\theta -e_2)V[-3],\\
  Q_0&=&  -K(\theta +1-r)(\theta +2-r)B_{1s}-(\theta +2-r)X[-1]B_{2s}+(\theta +2-e_2)V. 
\end{array}\]

\item Now, with the forms of $P_{nn}$, $\dots$ given above and by use of
  the expression as $C\!P4 = p_4\theta ^4+\cdots+p_0$, $C\!Q4 = q_4\theta ^4+\cdots+q_0$,
  we solve the
equations \eqref{u3pq}, $E0=0$, $Ed1=0$, and $Ed2=0$, and we have the result
as follows:
\begin{eqnarray*}
  P_2&=&K(\theta +2-e_1)(\theta +2-e_2)(\theta +2-e_3), \\
  Q_2 &=& K(\theta +3-e_1)(\theta +2-e_2)(\theta +3-e_3), \\
  P_{nn}&=&(M_1\theta +M_0)(\theta -r)(\theta +1-r)(\theta +e_8)(\theta +e_9), \\
  Q_{nn}&=&(M_1\theta +M_0)(\theta +3-r)(\theta +4-r)(\theta +e_8+2)(\theta +e_9+2), \\
  P_0&=& -K(\theta -r)(\theta +1-r)B_1[-1]-(\theta -r)X[-3]B_2[-2]+(\theta -e_2)V[-3], \\
  Q_0&=& -K(\theta +1-r)(\theta +2-r)B_{1s}-(\theta +2-r)X[-1]B_{2s}+(\theta +2-e_2)V, \\
  P_1&=& -K(\theta +1-r)B_2[-1] + (\theta +1-e_1)(\theta +1-e_2)(\theta +1-e_3)X[-3], \\
  Q_1&=& -K(\theta +2-r)B_{2s} + (\theta +3-e_1)(\theta +2-e_2)(\theta +3-e_3)X, 
\end{eqnarray*}
where
\begin{eqnarray*}
K &=& (e_1-1)(e_7-e_8-1)-(e_1+e_6+e_7-r-3)(e_1+e_4+e_7-r-3)/2 ,\\
M_1 &=& (e_7-e_8-1)(r+1-e_7), \\
M_0 &=& -(e_7-e_8-1)(e_7(e_7-r-1)+r^2-r), \\
X&=&  -M_1\theta  -(e_7-e_8-1)(-(r+2)e_7+4r+2), \\
V &=& -r(r-1)(e_7-1)(e_7-2)(K+(e_7-e_8-1)(\theta -e_1-e_3+2)).
\end{eqnarray*}
The factor $U_3$ defined in \eqref{u3pq}
is computed by use of ${{C}\kern-.2em{P}}4$ which we did not write above, as
\begin{eqnarray*}
  U_3 &=& -K(\theta +e_7-1)(\theta -r+1)(\theta +e_7-r+1) \\
  &&\quad +K(r-1)(e_7-1)(\theta -r-1+e_7)\\
&&\quad    -r(r-1)(e_7-1)(e_7-2)(e_7-e_8-1).
\end{eqnarray*}
This gives ${{C}\kern-.2em{P}}4$ and ${{C}\kern-.1em{Q}}4$ conversely, and also $P_n$ and $Q_n$.
\end{enumerate}

Let us next consider the shift 
$(c, g)\rightarrow (c-1, g+2)$, which is equal to the shift
\[shrcg:=\{e_2\to e_2-1,\, e_5\to e_5-1,\, e_7\to e_7+2\}.\]
We denote the shift operators by $P\! R_{cg}$ and $Q\!R_{cg}$.
The procedure is very similar to the previous case
and we obtain the result as follows.
\begin{eqnarray*}
  P_2&=&K(\theta +2-e_1)(\theta +2-e_2)(\theta +2-e_3),\\
  Q_2&=&K(\theta +3-e_1)(\theta +4-e_2)(\theta +3-e_3), \\
  P_{nn}&=&M(\theta -r)(\theta +1-r)(\theta +e_7)(\theta +e_8)(\theta +e_9), \\
  Q_{nn}&=&M(\theta +3-r)(\theta +4-r)(\theta +e_7+4)(\theta +e_8+2)(\theta +e_9+2), \\
  P_n&=& M \theta (\theta -r)B_1[-1] -K(\theta -r)(\theta +2-2r)(\theta +e_7)(\theta +e_8)(\theta +e_9), \\
  Q_n&=& M(\theta +2)(\theta +3-r)B_{1s}[1] -K(\theta +2-2r)(\theta +3-r)(\theta +e_7+3)\\
  &&\qquad \times(\theta +e_8+1)(\theta +e_9+1), \\
  P_0&=&-K\theta (\theta +1-2r)B_1[-1] + M\theta (\theta -1)B_2[-2] + Kr(r-1)(\theta +e_7)N, \\
  Q_0&=&-K(\theta +1)(\theta +2-2r)B_{1s}+ M(\theta +1)(\theta +2)B_{2s} \\
  &&\qquad +Kr(r-1)(\theta +e_7+2)N, \\
  P_1&=& -K(\theta +1-r)B_2[-1] + (\theta +1-e_1)(\theta +1-e_3)U_2[-3], \\
  Q_1&=& -K(\theta +2-r)B_{2s} + (\theta +3-e_1)(\theta +3-e_3)U_2, 
\end{eqnarray*}
where 
\begin{eqnarray*}
K &=& e_2-r-1, \\
M  &=& e_2+e_5-2r-2, \\
N &=& (\theta +e_8)(\theta -e_2-g+1)-(e_5-r-1)(g+1),\\
U_2&=& -M\theta ^2+ M(K-5)\theta  + (r+2)K(M+r-3)-6(M-K), 
\end{eqnarray*}

\subsubsection{S-values and reducibility conditions}\label{SE6ShiftS}

By use of the shift operators obtained above, we can compute
  $S$-values as follows.
Recall that, for example, the S-value $Sv_{a-}$ was defined by the composition: 
\[P_{a+}(a=a-1)\circ P_{a-}(a):{\rm Sol}(a)\rightarrow {\rm Sol}(a-1)\rightarrow {\rm Sol}(a).\]

\begin{prp}\label{S-valuesSE6}
The S-values of the shift operators for $a,b,c,g,p,q,r$:
\[\def\arraystretch{1.2}\setlength\arraycolsep{3pt} \begin{array}{rcl}
Sv_{a-} &=& -r^2(r-1)^2a(2a+g)(e_2-r-1)(a+b+c+g+1)\\
&&      \times(2a+2b+2c+g+2)(e_3-r-1)(e_3-r-2)e_8e_9(e_9+1), \\
\noalign{\smallskip}
Sv_{b-} &=& -r^2(r-1)^2b(2b+g)(e_5-r-1)(a+b+c+g+1)\\
&&     \times(2a+2b+2c+g+2)(e_6-r-1)(e_6-r-2)e_8e_9(e_9+1), \\
\noalign{\smallskip}
Sv_{c-} &=& r^2(r-1)^2c(2c+g)(e_2-r-1)(e_5-r-1)(a+b+c+g+1)\\
&&\times(2a+2b+2c+g+2)(e_3-r-1)(e_3-r-2)(e_6-r-1)\\
&&\times (e_6-r-2)e_7(e_7+1)e_8(e_8+1)e_9(e_9+1), \\
\noalign{\smallskip}
Sv_{g-} &=& r^2(r-1)^2(g-1)(2a+g)(2b+g)(2c+g)(a+b+c+g)\\
&& \times(a+b+c+g+1)(2a+2b+2c+g+2)(e_3-1-r)(e_3-2-r)\\
&&\times (e_6-1-r)(e_6-2-r)e_8(e_8+1)e_9(e_9+1), \\
\noalign{\smallskip}
Sv_{p-} &=& -r^2(r-1)^2p(e_2-r-1)(e_3-r-1)e_7e_8e_9, \\
\noalign{\smallskip}
Sv_{q-} &=& -r^2(r-1)^2q(e_5-r-1)(e_6-r-1)e_7e_8e_9, \\
\noalign{\smallskip}
Sv_{r-} &=& r(r-1)(r-2)e_7e_8e_9.
\end{array}\]
\end{prp}
Recall that the equation is reducible when the S-value of a shift operator vanishes (Proposition \ref{Sred}), and that if an equation is connected by a shift operator with a reducible equation, it is also reducible (Theorem \ref{red_atoap}). Each of the parameters $a,\dots, r$ of $S\!E_6$ admits a shift operator (Theorem \ref{shiftopSE6}). 
So we have the following theorem.

\begin{thm}\label{SE6redcond} If one of 
\begin{gather*}
  -a,\ -b,\  -c,\quad a+g/2+1,\ b+g/2+1,\ c+g/2+1,\\
  -(\delta+g/2+1),\ \delta+g+2,\quad 1/2-g/2,\\ 
e_i-r\ (1\le i\le6),\ e_7,\  e_8,\  e_9,\quad r
\end{gather*}
is an integer, then the equation $S\!E_6$ is reducible.
\end{thm}
A reason why not $a$, $a+g/2,\dots$ but $-a$, $a+g/2+1,\dots$ 
is to make
  statements in Theorem \ref{SE6A17} and Proposition \ref{redSE6}
  simpler.

  
\section{Reducible $S\!E_6$}\label{RedSE6}

In Theorem \ref{SE6redcond}, the conditions on the third line come from those of $E_6$ (Theorem \ref{redcondE6}), the reducible types are $\{1,5\}$ and $\{1,1,1,3\}$ (Propositions \ref{E6red_e9}, \ref{prp1113}).
In this section we see that, for the conditions on the first and second lines, the reducible type is $\{2,4\}$. 

\subsection{Dividing $S\!E_6$ by the hypergeometric operator $E_2$ from the right}\label{EarGauss}

\begin{thm}\label{SE6A17}Let $\Delta$ be one of
\begin{gather*}
  -a,\ -b,\  -c,\quad a+g/2+1,\ b+g/2+1,\ c+g/2+1,\\
  -(\delta+g/2+1),\ \delta+g+2,\quad 1/2-g/2.
\end{gather*}
$S\!E_6(a,b,c,g,p,q,r)$ is divisible by the hypergeometric operator $E_2$ from the right if $\Delta=0$. 
The quotient is ${\rm ST}_4$ $($see \S 1.1$)$.
\end{thm}
Proof of Theorem \ref{SE6A17} goes as follows: We
assume  $$S\!E_6=T_0(\theta)+T_1(\theta)\partial+T_2(\theta)\partial^2+T_3(\theta)\partial^3$$
is divisible from the right by the hypergeometric operator with parameter $(A,B;C)$ 
$$E_2=E(A,B;C):=E_0(\theta )+E_1(\theta )\partial,\quad E_0=(\theta +A)(\theta +B),\ E_1=-(\theta +C),$$
and write the quotient as
$$Q:=Q_0(\theta)+Q_1(\theta)\partial+Q_2(\theta)\partial^2.$$
We solve the equation $Q\circ E_2=S\!E_6$:

\[\def\arraystretch{1.2}
\begin{array}{lll}
       &\ \ \ Q_0E_0(\theta)&=T_0,\\
  Q_0E_1(\theta)&+\ Q_1E_0(\theta +1)&=T_1,\\
  Q_1E_1(\theta +1)&+\ Q_2E_0(\theta +2)&=T_2,\\
  Q_2E_1(\theta +2)&&=T_3.\end{array}
\]
Rewrite the exponents of $S\!E_6$ as
\[\def\arraystretch{1.2}
\begin{array}{l}
(y_{00},\, \dots,\, y_{05})=(0,\, 1,\, 2,\, e_1,\, e_2,\, e_3),\\
(y_{10},\, \dots,\, y_{15})=(0,\, 1,\, 2,\, e_4,\, e_5,\, e_6),\\
(y_{20},\, \dots,\, y_{25})=(-r,\, 1-r,\, 2-r,\, e_7,\, e_8,\, e_9),
\end{array}
\]
where $e_1$, ..., $e_9$ are given in Theorem \ref{etoar}. Then 
\begin{itemize}
\item Since we have
$$T_0= (\theta +y_{20})\cdots(\theta +y_{25}),\quad E_0|T_0,$$
$A$ (and $B$) must be one of
$\{y_{20},\, y_{21},\, y_{22},\, y_{23},\, y_{24},\, y_{25}\}$.
\item Since we have
$$T_3=-(\theta +3-y_{03})(\theta +3-y_{04})(\theta +3-y_{05}),\quad E_1(\theta +2)|T_3,$$
$C$ must be one of $\{1-y_{03},\,  1-y_{04},\,  1-y_{05}\}$,
\end{itemize}
Now for each case we check the remaining two equations
and get the nine possible pairs $(Q,E_2)$ under the condition $\Delta=0$. For example, when $\Delta=-a$, $Q$ and $E_2$ have the following Riemann schemes.
\par\smallskip\noindent
$$ \left(\begin{array}{ccccccccccc}
x=0:&  0&& 1&& y_{04}-1&& y_{05}-1\\              
x=1:&  0&& 1&& y_{13}-1&& y_{14}-1\\             
x=\infty: & \ y_{21}&&\  y_{22}&& y_{23}&& y_{24} \end{array} \right),  \qquad           
\left(\begin{array}{cccccc}
x=0:&  0&  y_{03}    \\    
x=1:&  0&  y_{15}\\
x=\infty:& \  y_{20}\ & y_{25}   \end{array} \right).   $$          

\par\noindent  
The Riemann scheme of $Q$ (note $(y_{21},y_{22})=(1-r,2-r)$)
tell that $Q$ have no accessory parameters and so they are ${\rm ST}_4$.

\begin{remark}When $S\!E_6$ is divisible by $E_2$ from the right, the adjoint  $S\!E^*_6$, which is again an $S\!E_6$, is divisible from the left by the adjoint $E^*_2$, which is again an $E_2$; the quotient is an ${\rm ST}_4$.
\end{remark}
  
\subsection{Reducible cases of $S\!E_6$}\label{RedSE6fact}
When $S\!E_6$ is reducible as given in Theorems \ref{SE6redcond} and \ref{SE6A17}, we show the type of factorization. 
\begin{prp}\label{redSE6}{$(1)$} Recall $\Delta$ in Theorem \ref{SE6A17}.
  If $\Delta \in {\bf Z}$, then $S\!E_6$ factorizes as follows:
  $$\begin{array}{ccccccc}
    \Delta=\ &\ \dots\ &\ -1\ &\ 0\ &\ 1\ &\ 2\ &\ \dots\\
    \ &\ \dots\ &\ [42]\ &\ [42]A0\ &\ [24]A0\ &\ [24]\ &\ \dots\end{array}$$
    When $\Delta=0,1$ furthermore,
    the factor $[4]$ is essentially ${\rm ST}_4$.
    \medskip

\noindent    
{$(2)$} If $e_i-r$ or $e_j \in {\bf Z}$, then $S\!E_6$ factorizes as follows:
     For $i=1,\, \dots,\, 6$ and $j=7,\, 8,\, 9$,
$$\begin{array}{ccccccc}
    e_i-r,\  e_j =\ &\ \dots\ &\ -1\ &\ 0\ &\ 1\ &\ 2\ &\ \dots\\
    \ &\ \dots\ &\ [51]\ &\ [51]A0\ &\ [15]A0\ &\ [15]\ &\ \dots\end{array}$$
When $e_i-r,e_j=0,1$, the factor $[5]$ is essentially $S\!E_5$.
\medskip

\noindent
 {$(3)$} If $r\in {\bf Z}$, then $S\!E_6$ factorizes as follows:
  $$ \begin{array}{ccccccccc}
    r=\ &\ \dots\ &\ -2\ &\ -1\ &\ 0\ &\ 1\ &\ 2\ &\ 3\ &\ \dots\\
    \ &\ \dots\ &\ [1113]\ &\ [1113]A0\ &\ [1131]A0\ &\ [1311]A0\ &\ [3111]A0\ &\ [3111]\ &\ \dots\end{array}$$
    When $r=-1,0,1,2$, the factor $[3]$ is essentially $S\!E_3$.
\end{prp}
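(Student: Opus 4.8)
The plan is to handle the three families of conditions separately, in each case establishing the factorization at the small ``core'' values of the relevant parameter and then propagating to all remaining integer values by the shift operators of Theorem~\ref{shiftopSE6} together with Proposition~\ref{FactorType}; throughout, the parameters are taken generic subject only to the single codimension-one condition in force, so the factors occurring are themselves irreducible and the type is the generic one. The $A'$-cases reduce to the $A$-cases by the symmetry $(a,b,p,q,x,\partial)\to(b,a,q,p,1-x,-\partial)$ of $S\!E_6$.

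For (1): when $A_k=0$ the factorization $S\!E_6=\mathrm{ST}_4\circ E_2$ is exactly Theorem~\ref{SE6A17}, giving type $[42]$, no apparent singularities, and order-$4$ factor essentially $\mathrm{ST}_4$. For $A_k=-1,-2,\dots$ I would use the shift moving $A_k$ by one (the shift $a\to a\pm1$ for $A_1=-a$, $g\to g\pm2$ for $A_3=(1-g)/2$, and so on --- every $A_k$ moves by integer steps under an available shift), whose Svalue, read from Proposition~\ref{SvaluesSE6}, does not vanish at those values; Proposition~\ref{FactorType} then propagates type $[42]$. The value $A_k=1$ needs a separate argument, since every Svalue governing the step $A_k=1\to A_k=0$ vanishes at $A_k=0$ --- which is exactly why the ordered type is permitted to change to $[24]$. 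Here I would pass to the adjoint: by the Remark following Theorem~\ref{SE6A17}, $S\!E_6^{*}$ is again an $S\!E_6$ and is divisible from the left by $E_2$ with quotient $\mathrm{ST}_4$ precisely when $S\!E_6$ is divisible from the right, and one checks that the involution induced on $a,\dots,r$ carries the locus $A_k=1$ for $S\!E_6$ onto the locus $A_{k'}=0$ for $S\!E_6^{*}$ (for $A_3$ this is $g\leftrightarrow -g$). Hence $S\!E_6=E_2\circ\mathrm{ST}_4$ up to essentially-the-same, i.e. type $[24]A0$ with order-$4$ factor $\mathrm{ST}_4$; alternatively one verifies from the explicit monodromy matrices $\mu_0,\mu_1$ of \S\ref{monodSE6} that at $A_k=1$ there is an invariant $4$-dimensional subspace but none of dimension $2$, and applies Proposition~\ref{invsubsp}. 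The values $A_k=2,3,\dots$ then propagate from $A_k=1$ as before.

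For (2) and (3): since $s=-r$ on the $S\!E_6$-locus, the conditions $e_i-r\in\mathbb Z$, $e_j\in\mathbb Z$, $r\in\mathbb Z$ are, for $E_6$, the conditions $e_i+s\in\mathbb Z$, $e_j\in\mathbb Z$, $s\in\mathbb Z$, already settled for $E_6$ in Propositions~\ref{E6red_e9} and~\ref{prp1113} and in \S\ref{E6e1rr} (using the block-permutation symmetry of $E_6$ and the change $x\to1/x$). As $S\!E_6$ is a specialization of $E_6$ whose $E_6$-factors have coefficients rational in the exponents with no poles on the generic $S\!E_6$-locus, those factorizations, their ordered types, and the $A0$ statements descend verbatim to $S\!E_6$ at the core values $e_i-r,e_j\in\{0,1\}$ and $r\in\{-1,0,1,2\}$. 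It remains to identify the order-$5$ (resp. order-$3$) factor as essentially $S\!E_5$ (resp. $S\!E_3$): for $e_9=0$ this is immediate because the $S\!E_6$-condition with $e_9=0$ is the $S\!E_5$-condition and $S\!E_6(e_9=0)=S\!E_5\circ\partial$, $S\!E_6(e_9=1)=\partial\circ S\!E_5(e_1+1,\dots,e_6+1,e_7-1,e_8-1)$, in parallel with \S\ref{TheTenRel}; for $e_7$ it follows from the $e_7\leftrightarrow e_9$ symmetry of $E_6$ (which preserves the $S\!E_6$-locus), and for $e_8$ and for $e_i-r$ ($i\le6$) one matches the Riemann scheme of the order-$5$ factor against $R_5$ and checks the specialization constraint using the integral representation of \S\ref{RedSE6planes}. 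For $r\in\{-1,0,1,2\}$ the order-$3$ factor is essentially $S\!E_3$ because the construction of $S\!E_6$ as an addition followed by a middle convolution of $S\!E_3$ (\S\ref{SE6MC}, Theorem~\ref{etoar}) runs parallel to that of $E_6$ from $E_3$, so the $E_3$-factor of $E_6$ specializes to an $S\!E_3$. Finally the tails $e_i-r,e_j\ge2$ or $\le-1$ and $r\ge3$ or $\le-2$ propagate via $P_{p\pm},P_{q\pm},P_{r\pm}$ and Proposition~\ref{FactorType}, the Svalues $Sv_{p-},Sv_{q-},Sv_{r-}$ of Proposition~\ref{SvaluesSE6} being nonzero there; as in the tables, the $A0$ claim is dropped for the tails.

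The main obstacle I expect is not the shift-operator bookkeeping but the two identification steps: the case $A_k=1$ of (1), where the adjoint (or monodromy) computation is genuinely needed and where the ordered type flips from $[42]$ to $[24]$, and recognizing the order-$3$, $4$ and $5$ factors as the specific equations $S\!E_3$, $\mathrm{ST}_4$, $S\!E_5$ rather than generic $E_3$, $E_4$, $E_5$, which forces a return to the integral representation and the middle-convolution structure instead of a purely formal argument with operators.
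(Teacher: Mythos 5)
Your proposal follows essentially the same route as the paper's proof: Theorem \ref{SE6A17} for $A_k=0$ together with the adjoint involution $A_k\mapsto 1-A_k$ for $A_k=1$ in part (1), restriction of the $E_6$ factorizations of Propositions \ref{E6red_e9} and \ref{prp1113} (with $S\!E_5$, $S\!E_3$ arising as the corresponding restrictions) for parts (2) and (3), and propagation along the tables via the shift operators and Proposition \ref{FactorType}. The extra verifications you sketch (monodromy alternative, Riemann-scheme matching for $e_8$ and $e_i-r$) are details the paper leaves implicit, but the argument is the same.
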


\begin{proof} Recall Remark \ref{factorSameType}. 
(1) comes from Theorem \ref{SE6A17} and the fact that by taking adjoint $\Delta$ changes into $-\Delta+1$. Proposition \ref{E6red_e9} says if $e_9=0$ then it factors as [51] and the factor [5] is $E_5$; to get (2), we have only to recall that $S\!E_6$ is the restriction of $E_6$ under the condition:
$$e_1-2e_2+e_3=e_4-2e_5+e_6=e_7-2e_8+e_9,$$
and $S\!E_5$ is the restriction of $E_5$ under the same condition with $e_9=0$,
as is stated in \S 1.1. (3) is due to Proposition \ref{prp1113}.
\end{proof} 

\subsubsection{Shift relation when $S\!E_6$ is divisible by $E_2$}\label{RedSE6shift}

Theorem \ref{redSE6} says that when $\Delta=0$, $S\!E_6$ is divisible by the Gauss equation $E_2$ from the right, and when $\Delta=1$, from the left. In these cases, We see the factorization of shift relations, for example, when  $\Delta=-a$; for the remaining cases, we have the same type of factorization. We omit the proof.

In \S\ref{factor_shift_rel_SE3}, for $S\!E_3$, factorization of the shift relations are studied when $a=0,1$. They reduce to a shift relation of $E_2$ and a trivial identity. Here, for $S\!E_6$, we see that the factorization of the shift relations, for $a=0,1$,  reduce to a shift relation of $E_2$ and that of ${\rm ST}_4$.

\begin{thm}\label{SE6breakasE2E4}
Relative to the shift relations:
\[E(a)\circ P_{a+}(a-1)=Q_{a+}(a-1)\circ E(a-1)\]
  and
  \[E(a-1)\circ P_{a-}(a)=Q_{a-}(a)\circ E(a),\]
  where $E=S\!E_6$, assume $a=0$. Then, they factorize as
$$[4,2]\circ [1,4]=[4,1]\circ [2,4]\quad{\rm and}\quad [2,4]\circ [3,2]=[2,3]\circ [4,2],$$
and, up to multiplication of functions, left and right factors are canceled, and they reduce to
$$[2]\circ [1]=[1]\circ [2]\quad{\rm and}\quad [4]\circ [3]=[3]\circ [4],$$
which are a shift relation of $E_2$ and that of ${\rm ST}_4$, respectively. 
\end{thm}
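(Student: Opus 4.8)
The plan is to read off the two factorizations of $S\!E_6$ supplied by Theorem~\ref{SE6A17}, to pin down the kernels of the shift operators involved using the vanishing Svalues of Proposition~\ref{SvaluesSE6}, and then to cancel common one-sided factors formally in the domain $\mathbb{C}(x)[\partial]$. Concretely, at $a=0$ (so $A_1=0$) Theorem~\ref{SE6A17} gives that $E(0):=S\!E_6(a=0)$ factors as ${\rm ST}_4\circ E_2$, of type $[4,2]$, while at $a=-1$ (so $A_1=1$) its adjoint form — equivalently Theorem~\ref{redSE6}(1) — gives that $E(-1):=S\!E_6(a=-1)$ factors as an order-$2$ Gauss operator followed by an ${\rm ST}_4$, of type $[2,4]$. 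Write $E(0)=A_4\circ A_2$ and $E(-1)=D_2\circ E_4$, the subscripts recording the orders, so $A_4,E_4$ are of ${\rm ST}_4$-type and $A_2,D_2$ of Gauss type. For generic values of the remaining parameters the factors $A_2,D_2,A_4,E_4$ are irreducible and the two extensions of $\pi_1$-modules are non-split (this one reads off the explicit monodromy of \S\ref{monodSE6}); hence by Proposition~\ref{invsubsp} the space ${\rm Sol}(A_2)$ is the \emph{unique} proper nonzero $\pi_1$-submodule of ${\rm Sol}(E(0))$, of dimension $2$, and ${\rm Sol}(E_4)$ the unique one of ${\rm Sol}(E(-1))$, of dimension $4$.

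Next I would locate the kernels of the shift operators. The operators $P_{a+}(-1)\colon{\rm Sol}(E(-1))\to{\rm Sol}(E(0))$ and $P_{a-}(0)\colon{\rm Sol}(E(0))\to{\rm Sol}(E(-1))$ are $\pi_1$-morphisms (Lemma~\ref{pi1morphsim}) of order $5$ (from \S\ref{SE6Shifta}, and in any case forced by order-counting in the shift relations). Since $Sv_{a-}$ has the factor $a$, we get $Sv_{a-}(0)=0$, and by Proposition~\ref{Svalues} also $Sv_{a+}(-1)=0$; hence the image of $P_{a-}(0)$ lies in $\ker P_{a+}(-1)$ and the image of $P_{a+}(-1)$ lies in $\ker P_{a-}(0)$. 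A nonzero operator of order $5$ is not right-divisible by one of order $6$, so each of these kernels is a \emph{proper} submodule and each image is nonzero; by the preceding paragraph the kernels must therefore be the unique proper nonzero submodules. Thus $\ker P_{a+}(-1)={\rm Sol}(E_4)$, so $E_4$ is a common right factor and $P_{a+}(-1)=L_1\circ E_4$ with ${\rm ord}\,L_1=1$ — type $[1,4]$, whose order-$4$ factor is \emph{precisely} the ${\rm ST}_4$-factor $E_4$ of $E(-1)$; symmetrically $\ker P_{a-}(0)={\rm Sol}(A_2)$ forces $P_{a-}(0)=M_3\circ A_2$ of type $[3,2]$, whose order-$2$ factor is the Gauss factor $A_2$ of $E(0)$. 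A parallel argument applied to the adjoint shift relations — using that $S\!E_6^*$ is again an $S\!E_6$ with $A_k\mapsto 1-A_k$ and that the adjoint of an ${\rm ST}_4$ (resp.\ of an $E_2$) is again one — identifies $Q_{a+}(-1)$ as of type $[4,1]$ with \emph{left} factor exactly $A_4$, and $Q_{a-}(0)$ as of type $[2,3]$ with left factor exactly $D_2$.

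Substituting these into the first shift relation $E(0)\circ P_{a+}(-1)=Q_{a+}(-1)\circ E(-1)$ gives $A_4\circ A_2\circ L_1\circ E_4=A_4\circ L_1'\circ D_2\circ E_4$, which displays $[4,2]\circ[1,4]=[4,1]\circ[2,4]$; cancelling $A_4$ on the left and $E_4$ on the right (valid in $\mathbb{C}(x)[\partial]$, up to the function multiples absorbed by ``essentially the same'') leaves $A_2\circ L_1=L_1'\circ D_2$, i.e.\ $[2]\circ[1]=[1]\circ[2]$. Here $L_1$ is a nonzero operator of order $1<2$ carrying ${\rm Sol}(D_2)$ into ${\rm Sol}(A_2)$, hence a shift operator of the Gauss operator $E_2$ (the parameters of $A_2$ and $D_2$ differ by integers, as those of $S\!E_6$ do under $a\mapsto a-1$), so the relation is a shift relation of $E_2$. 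The second relation $E(-1)\circ P_{a-}(0)=Q_{a-}(0)\circ E(0)$ becomes $D_2\circ E_4\circ M_3\circ A_2=D_2\circ\widetilde M_3\circ A_4\circ A_2$, i.e.\ $[2,4]\circ[3,2]=[2,3]\circ[4,2]$, and cancelling $D_2$ on the left and $A_2$ on the right gives $E_4\circ M_3=\widetilde M_3\circ A_4$, i.e.\ $[4]\circ[3]=[3]\circ[4]$, which is a shift relation of ${\rm ST}_4$ by the same reasoning. For the remaining cases $A_2=0,\dots,A_7=0$ one checks from Proposition~\ref{SvaluesSE6} that each $A_j=0$ is a zero of $Sv_{e+}$ or $Sv_{e-}$ for a suitable $e\in\{a,b,c,g\}$ (for instance $A_7=0$ kills $Sv_{a-}$, $A_4=0$ kills $Sv_{a+}$ through the factor $2a+g$, $A_2=0$ kills $Sv_{c-}$, $A_3=0$ kills $Sv_{g-}$), and since by Theorem~\ref{SE6A17} the two $S\!E_6$'s joined by that shift are reducible of types $[4,2]$ and $[2,4]$, the argument above applies verbatim — the symmetry $(a,p)\leftrightarrow(b,q)$ handling the variant with $-b$ in place of $-a$ at once.

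The main obstacle is the middle step: establishing that the \emph{shift operators}, not merely the shift relations, acquire an honest one-sided ${\rm ST}_4$- or $E_2$-factor, and that this factor \emph{coincides} with the corresponding factor of $E(0)$, $E(-1)$ and $Q_{a\pm}$ — only then do the cancellations close up to genuine shift relations of $E_2$ and ${\rm ST}_4$. This rests on the genericity inputs (irreducibility of the Gauss and ${\rm ST}_4$ composition factors, and non-splitness of $S\!E_6$ at $A_j=0$), to be read off from the explicit monodromy of \S\ref{monodSE6}, and on careful bookkeeping of the ``essentially the same'' ambiguities — left multiplication by a function, right multiplication by $x^*(x-1)^{**}$, and the apparent singularities permitted by Proposition~\ref{apparentsing} — so that these get absorbed into the low-order cofactors rather than obstructing the cancellation. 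As a cross-check one may instead specialize the explicit shift operators of \S\ref{SE6Shifta} at $a=0$ and $a=-1$ and verify all the factorizations by direct division.
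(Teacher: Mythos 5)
Your strategy --- pin down the kernels of $P_{a\pm}$ from the vanishing Svalue ($Sv_{a-}(0)=Sv_{a+}(-1)=0$) and the invariant-subspace structure of ${\rm Sol}(S\!E_6)$ at $a=0,-1$, handle the $Q$'s by adjunction, then cancel one-sided factors in $\mathbb{C}(x)[\partial]$ --- is genuinely different from the paper's, which simply specializes the explicit shift operators of \S\ref{SE6Shifta} and exhibits the factorizations by direct division (the ``cross-check'' you offer at the end is in fact the paper's proof: it finds $P_{a+}(-1)=K_5\circ K_6$, $Q_{a+}(-1)=K_7\circ K_8$ with $x^2(x-1)^2K_6=K_4$ and $K_1=K_7F(x)$, and likewise in the $A_4$ case). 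Your Svalue bookkeeping, the image-in-kernel observations, the properness of the kernels because ${\rm ord}\,P<6$, and the final cancellation (left/right cancellation being valid in the domain $\mathbb{C}(x)[\partial]$, with function multiples absorbed into the order-$1$ and order-$3$ cofactors) are all sound.

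The genuine gap is exactly at the step you flag as the main obstacle, and it is not filled: the identifications $\ker P_{a+}(-1)={\rm Sol}(E_4)$, $\ker P_{a-}(0)={\rm Sol}(A_2)$ (and their adjoint analogues for $Q_{a\pm}$) require that ${\rm Sol}(E(0))$ and ${\rm Sol}(E(-1))$ each possess a \emph{unique} proper nonzero monodromy-invariant subspace, i.e.\ that the $E_2$- and ${\rm ST}_4$-factors of Theorem \ref{SE6A17} are irreducible and that both extensions are non-split. You assert this ``for generic remaining parameters'' and point to \S\ref{monodSE6}, but the generators $\mu_0,\mu_1$ there are written down under the hypothesis that $S\!E_3$ is irreducible and do not by themselves describe the submodule lattice on the reducibility locus $a\in\{0,-1\}$; nothing in your argument excludes, say, a split extension, in which case $\ker P_{a+}(-1)$ could be a $2$-dimensional complement rather than ${\rm Sol}(E_4)$, and the claimed types $[1,4]$, $[3,2]$, $[4,1]$, $[2,3]$ --- hence the whole cancellation pattern and the resulting $E_2$ and ${\rm ST}_4$ shift relations --- would not follow. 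A smaller unverified point of the same kind: the stated cofactor orders need ${\rm ord}\,P_{a\pm}=5$ to persist under the specialization $a=0,-1$ (order counting in $EPQE$ only yields ${\rm ord}\,P={\rm ord}\,Q$), and ``applies verbatim'' for $A_2=0,\dots,A_7=0$ inherits the same unproved inputs for the $c$-, $g$- and composed shifts. Until the irreducibility/non-splitness (or an exponent-theoretic substitute ruling out the unwanted submodules) is actually established, the proof is incomplete; the paper's computational verification settles precisely what your conceptual argument leaves open.
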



\section{Review of fundamental properties of $E_5$}\label{E5}

We specialize  $E_6=T_0+T_1\partial+T_2\partial^2+T_3 \partial^3$ by the condition $e_9=0$, and define $E_5$ as in \S\ref{TheTenTable}:
\[E_5:=E_6(e_9=0)/\partial=x\overline{T}_0+\overline{T}_1+\overline{T}_2\partial +\overline{T}_3\partial ^2.\]
We review some properties of $E_5$ from \cite[\S 9]{HOSY1}.

  \subsection{Shift operators, S-values  and reducibility conditions  of $E_5$}\label{E5Shift}

  As in the case of equation $E_6$, equation $E_5$ has shift operators relative to the shifts
  of blocks ${\bm e}_1=\{e_1,e_2,e_3\}$ and  ${\bm e}_4=\{e_4,e_5,e_6\}$.
For example, if  
$P_{\pm0}$ denotes the shift operator of $E_5$
for the shift ${{\bm e}_1}\pm{\bm 1}$,
and $P_{0\pm}$ for  ${{\bm e}_4}\pm{\bm 1}$, then 
$$P_{-0}=(x-1)\partial +1-r,\quad P_{0-}=x\partial +1-r,\quad r = (e_1+\cdots+e_8)/3-2.$$

\begin{prp}\label{S-valueE5} The S-values for the shifts of blocks:
\[ \def\arraystretch{1.1} \begin{array}{ll}
 Sv_{-0}&=P_{+0}({\bm e}_1-1)\circ P_{-0}=(r-1)(r-2)(e_4-r)(e_5-r)(e_6-r),\\
 Sv_{0-}&= P_{0+}({\bm e}_4-1)\circ P_{0-}= -(r-1)(r-2)(e_1-r)(e_2-r)(e_3-r).
\end{array}\]
\end{prp}

\begin{thm} \label{redcondE5}
  If one of $r,\ e_i-r \ (1\le i\le6)$ is an integer,
  then the equation $E_5$ is reducible.
  \end{thm}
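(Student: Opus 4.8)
The plan is to follow the proof of Theorem~\ref{redcondE6} almost verbatim: first obtain reducibility from the vanishing of a Svalue, and then propagate it along chains of shift relations to pass from the finitely many ``small'' values to all integral values. The ingredients are Proposition~\ref{SvalueE5}, which gives
\[Sv_{-0}=(r-1)(r-2)(e_4-r)(e_5-r)(e_6-r),\qquad Sv_{0-}=-(r-1)(r-2)(e_1-r)(e_2-r)(e_3-r);\]
Proposition~\ref{Sred}, by which an equation one of whose shift operators has vanishing Svalue is reducible; and Theorem~\ref{red_atoap}, by which reducibility is transported across a shift relation $E'\circ P=Q\circ E$. One should note here that such a relation, once verified for generic exponents, is a polynomial identity in the exponents and hence holds for all of them, so it remains available on the non-generic loci we will be working on.

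The first step is the direct observation that $Sv_{-0}$ and $Sv_{0-}$ vanish precisely when one of $r-1$, $r-2$, $e_i-r$ $(1\le i\le 6)$ is zero, so $E_5$ is reducible at all such parameters. The second step is to compute the action of the two available block shifts on the relevant quantities. Since $r=(e_1+\cdots+e_8)/3-2$, the shift ${\bm e}_1\to{\bm e}_1\pm{\bf 1}$ changes $r$ by $\pm 1$, leaves $e_j-r$ unchanged for $j=1,2,3$, and changes $e_j-r$ by $\mp 1$ for $j=4,5,6$; symmetrically ${\bm e}_4\to{\bm e}_4\pm{\bf 1}$ changes $r$ by $\pm 1$, moves $e_j-r$ by $\mp 1$ for $j=1,2,3$, and leaves $e_j-r$ unchanged for $j=4,5,6$.

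The final step treats an arbitrary integral value. If $r\in\mathbb{Z}$, iterate the appropriate ${\bm e}_1$-shift until $r=1$; then $Sv_{-0}$ vanishes, so the shifted equation is reducible, and since it is linked to the original one by a chain of shift relations, Theorem~\ref{red_atoap} makes $E_5$ reducible. If $e_i-r\in\mathbb{Z}$ with $1\le i\le 3$, iterate an ${\bm e}_4$-shift until $e_i-r=0$, where $Sv_{0-}$ vanishes; if $e_i-r\in\mathbb{Z}$ with $4\le i\le 6$, iterate an ${\bm e}_1$-shift until $e_i-r=0$, where $Sv_{-0}$ vanishes. In every case $E_5$ is reducible. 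I do not anticipate a genuine obstacle; the only points requiring care are the bookkeeping of how $r$ and the differences $e_i-r$ transform under the block shifts, and the remark that the shift relations used in the propagation extend from generic to arbitrary parameters by polynomial continuation.
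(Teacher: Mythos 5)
Your proposal is correct and follows essentially the same route as the paper (and the cited Part 1): the vanishing of the Svalues $Sv_{-0}$, $Sv_{0-}$ gives reducibility via Proposition \ref{Sred}, and reducibility is then transported to all integral values of $r$ and $e_i-r$ through chains of the block shift relations using Theorem \ref{red_atoap}, with exactly the bookkeeping of how the shifts move $r$ and $e_i-r$ that you carry out.
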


\subsection{Reducible cases of $E_5$}\label{E5Fact}
When $E_5$ is reducible as in Theorem \ref{redcondE5},
the equation $E_5$ factorizes and $E_4$ and $E_3$ appear as factors:

\begin{prp}\label{factorE5} $(1)$ If $e_i-r \in {\bf Z}$, then
  $E_5$ factorizes as follows: For $i=1,\dots,6$, 
  $$\begin{array}{ccccccc}
    e_i-r= \ &\ \cdots\ &\ -1\ &\ 0      \ &\     1  \ &\ 2\ &\ \cdots\\
    \ &\  \cdots\ &\ [4,1] \ &\ [4,1]A0\ &\ [1,4]A0\ &\ [1,4] \ &\ \cdots
  \end{array}$$
  When $e_i-r=0,1$, the factor $[4]$ is {essentially} $E_4$
  defined in Section $\ref{E4ap1}$.
    \par\noindent
$(2)$ For the case $r\in {\bf Z}$:   $$\begin{array}{cccccccc}
    r=\ &\  \cdots\ &\ -1    \ &\  0\ &\      1\ &\ 2\ &\ 3\ &\ \cdots\\
    \ &\ \cdots\ &\ [1,1,3] \ &\ [1,1,3]A0\ &\ [1,3,1]A0\ &\ [3,1,1]A0\ &\ [3,1,1]
    \ &\ \cdots\end{array}$$
    When $r=0,1,2$, the factor $[3]$ is {essentially} $E_3$.
\end{prp}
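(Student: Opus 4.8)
The plan is to transfer the known factorizations of $E_6$ to $E_5$ through the identity $E_6(e_9=0)=E_5\circ\partial$ recorded in \S\ref{TheTenTable}. On parameters this specialization has $s_{E_6}=s_{E_5}=-r$, so the reducibility locus $\{e_i-r\in\mathbb{Z}\}$ of $E_5$ corresponds to $\{e_i+s\in\mathbb{Z}\}$ for $E_6$ and $\{r\in\mathbb{Z}\}$ to $\{s\in\mathbb{Z}\}$. Reducibility of $E_5$ is already Theorem \ref{redcondE5}; what remains is to pin down the type of factors and identify the top factor with $E_3$ or $E_4$.

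I would first treat the base cases of part $2)$, $r=0,1,2$ (equivalently $s=0,-1,-2$). Here Proposition \ref{prp1113} gives that $E_6(e_9=0)$ is reducible of type $\{3111\}$ with the order-three block essentially $E_3$ and no apparent singularities. Since $E_6(e_9=0)=E_5\circ\partial$, the line of constants is a one-dimensional invariant subspace of $\mathrm{Sol}(E_6(e_9=0))$, namely the kernel of $\partial\colon \mathrm{Sol}(E_6(e_9=0))\to \mathrm{Sol}(E_5)$. Using Proposition \ref{invsubsp} and the explicit exponents of Proposition \ref{prp1113} (which place a $0$ over each of $0,1,\infty$ in the relevant rank-one piece), I would check that in the invariant chain realising the type $[1131]A0$ (resp. $[1311]A0$, $[3111]A0$) this line is the bottom rank-one block; quotienting it out yields an invariant chain in $\mathrm{Sol}(E_5)$ of type $[1,1,3]$ (resp. $[1,3,1]$, $[3,1,1]$) with the rank-three block unchanged, hence essentially $E_3$, and with no apparent singularities created.

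For the base cases of part $1)$, $e_i-r=0,1$: by the within-block symmetry of $E_6$ and the change $x\to 1-x$ we may take $i=1$, so the condition becomes $e_1+s=0$ or $1$. By \S\ref{E6e1rr}, $E_6$, and hence its specialization $E_6(e_9=0)$, then factors of type $[51]A0$ (resp. $[15]A0$) with the rank-five block essentially $E_5$; combining this invariant chain with the rank-one/rank-five chain of $E_6(e_9=0)=E_5\circ\partial$ produces a four-dimensional invariant subspace of $\mathrm{Sol}(E_5)$, so that $E_5$ is reducible of type $[4,1]$ (resp. $[1,4]$) with the $A0$ assertion inherited. To identify the rank-four factor with $E_4$ I would match Riemann schemes, computing the scheme of the rank-four quotient from the Fuchs relation \eqref{Fuchsrelation} together with the exponents of the rank-one and rank-five pieces and checking that, after renaming, it is the scheme of $E_4$ of \S\ref{TheTenTable} with no free accessory parameter left; alternatively one can invoke the one-level-down analogue of $E_6(e_9=0)=E_5\circ\partial$, expressing a suitable specialization of $E_5$ as $E_4\circ\partial$, from \cite{HOSY1}.

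All the remaining integer values are reached from these base cases by the shift operators $P_{\pm0},P_{0\pm}$ of \S\ref{E5Shift}: when the relevant Svalue of Proposition \ref{SvalueE5} is non-zero, Proposition \ref{FactorType} propagates the factorization type in steps of $\pm1$ (here $e_i-r=0$ propagates downward and $e_i-r=1$ upward, while $r=0,1,2$ propagate downward, stay, and upward respectively), and at each wall — where the connecting Svalue vanishes — the type is allowed to flip and an apparent singularity is picked up, accounting for the loss of the $A0$ label outside the base values. I expect the main obstacle to be the bookkeeping in the base cases: verifying that the rank-one factor split off from $E_6(e_9=0)$ is genuinely $\partial$ (so that quotienting by it returns $E_5$ and not a gauge twist of it) and that no apparent singularities are introduced on passing to the quotient — both of which come down to reading off exponents in Propositions \ref{prp1113} and the table of \S\ref{E6e1rr} — together with the explicit identification of the rank-four factor as $E_4$, the one point where a short computation (a division by $\partial$ after a specialization, or a Riemann-scheme match) is genuinely needed.
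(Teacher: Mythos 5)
The paper itself contains no proof of Proposition \ref{factorE5}: Section 6 is a review, and these tables are quoted from \cite{HOSY1}, where they are presumably obtained directly, by explicit division identities for $E_5$ analogous to the ones this paper displays for $E_6$ and $E_4$ (\S\ref{TheTenRel}, Proposition \ref{E4redE3}) together with the shift operators of $E_5$. So your route — pulling everything back through $E_6(e_9=0)=E_5\circ\partial$, reading off chains from Propositions \ref{E6red_e9}, \ref{prp1113} and \S\ref{E6e1rr}, and then propagating along integer values with Proposition \ref{FactorType} — is a genuinely different derivation; the propagation step is indeed exactly the paper's standard mechanism (Remark \ref{factorSameType}) and that part of your argument is sound, granted that the relevant Svalues of Proposition \ref{SvalueE5} are nonzero at the connecting parameter points, which holds for generic remaining exponents.

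Two of your steps, however, are genuine gaps rather than bookkeeping. First, the identification of the order-$4$ factor as essentially $E_4$ cannot be made by a Riemann-scheme match: $E_4$ is not free of accessory parameters (the table in \S\ref{TheTen} and \S\ref{E4ap1} assign it one accessory parameter $\mathcal T_{10}$, fixed as a specific cubic polynomial of the exponents), and an order-$4$ operator with the same scheme has a one-parameter family of possible accessory values, so ``no free accessory parameter left'' is not something the Fuchs relation or the exponents can deliver. One must actually compute the quotient — an explicit division of $E_5$ at $e_i-r=0,1$ by the explicit first-order factor, or an identity of the shape $E_5(\text{special})=E_4\circ[1]$ — and compare accessory parameters; your fallback of citing such an identity ``from \cite{HOSY1}'' is citing the very content of the proposition. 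Second, your transfer from $E_6(e_9=0)$ tacitly assumes (a) that the codimension-one factorizations of \S\ref{E6e1rr} and Proposition \ref{prp1113} survive the further specialization $e_9=0$ (the factors are parameter-dependent and could degenerate on this locus), and (b) that the line of constants $\ker\partial\subset\mathrm{Sol}(E_6(e_9=0))$ sits in the right position relative to those chains: the bottom rank-one block in part $2)$, inside the five-dimensional block when $e_1+s=1$, and distinct from the one-dimensional block when $e_1+s=0$. You flag the part-$2$ check but not the part-$1$ ones, and without them the induced chain in $\mathrm{Sol}(E_5)$ could have a different type (for instance $[1,1,2,1]$ instead of $[1,1,3]$), or give no factorization information at all in the $[5,1]$ case. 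These verifications are precisely where the content of the proposition lies, so as written the proposal is an outline of a workable strategy rather than a proof.
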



\section{$S\!E_5(a,b,c,g,p,q)$: a specialization of $E_5$}\label{SE5}

In this section we introduce
 a codimension-2 specialization $S\!E_5$ of $E_5$.  
 As we saw in Section \ref{RedSE6}, the specialization $S\!E_6$
 has various properties that $E_6$ does not have.
 We imitate this specialization to get a new equation $S\!E_5$ from $E_5$.

\subsection{Definition of $S\!E_5$ as a specialization of $E_5$}\label{SE5Def}
The equation $S\!E_5$ is defined as the specialization of $E_5$,
characterized by the condition
$$e_1-2e_2+e_3=e_4-2e_5+e_6=e_7-2e_8,$$
which is derived by the characterization of $S\!E_6$ by assuming
$e_9=0$. Hence, the parameter $r$ used for $S\!E_6$ is written as
\begin{equation}\label{rabpq}
  r=-2a-2b-2c-g-p-q-3.
\end{equation}
Substituting this identity to the expressions of $e_i$ for $S\!E_6$
given in \S 4.1, we see that $S\!E_5$ is parameterized by $\{a,b,c,g,p,q\}$ as
\begin{equation}\label{parse5}
  \def\arraystretch{1.1}
  \begin{array}{ll}
  e_1 = -2a - 2b - 2c - g - q - 2,\quad 
  &e_2 = -a - c - 2b - g - q - 1, \\
  e_3 = -2b - q,
  & e_4 = -2a - 2b - 2c - g - p - 2, \\
  e_5 = -b - c - 2a - g - p - 1,
  &e_6 = -2a - p,\\
  e_7 = 2a + 2b + g + 2, 
  &e_8 = a + b + 1.
  \end{array}
\end{equation}  

\subsection{From the shift operators of $S\!E_3$ to those of $S\!E_5$}

As is the case for $S\!E_6$ in \S\ref{shiftop_SE3_to_SE6},
we show how is $S\!E_5$ connected
by addition and middle convolution with $S\!E_3$. 

\subsubsection{From $S\!E_5$ to $S\!E_3$}
  \begin{enumerate}
  \item In the $(\theta,\partial)$-form of $\partial S\!E_5$, replace $\theta$ by $\theta+r$. The resulting equation is divisible from the left by $\partial^3$.
  \item Divide by $\partial^3$.
  \item The local exponents at 0 and 1 of the resulting equation are
$$p,\ p + a + c + 1,\ 2 a + 2 c + g + p + 2;\quad
  q,\ q + b + c + 1,\ 2 b + 2 c + g + q + 2.$$
Multiply $X^{-1}$ from the left and $X$ from the right, where
$ X:=x^p(x-1)^q$.
Then we get $S\!E_3$:
\end{enumerate}

\begin{prp}
  \begin{equation}\label{SSE3_SE5}
S\!E_3=X^{-1}\cdot\partial^{-3}\cdot\partial^{r}\cdot S\!E_5\cdot \partial^{-r}\cdot X,\quad X=x^p(x-1)^q.  \end{equation}
\end{prp}

In other words, given $u_3$ such that $S\!E_3u_3=0$, define $u_5$ as
$u_5:=\partial^{-r}Xu_3;$
then $S\!E_5u_5=0$.

\subsubsection{Recipe for computing shift operators of $S\!E_5$}
\label{recipe_3_to_5}

In view of the relation $(\ref{SSE3_SE6})$ the shift operators
for the shifts of $(a,b,c,g)$ can be derived
from the corresponding shift operators of $S\!E_3$.

In fact, we can follow a procedure similar to
\S \ref{recipe_3_to_6}:

For a shift $sh:(\mathbf{a})\to sh(\mathbf{a})$ of $\mathbf{a}=(a,b,c,g)$, let $S\!E_3(sh(\mathbf{a}))$ and $S\!E_5(sh(\mathbf{a}))$   be the shifted operators of $S\!E_3(\mathbf{a})$ and $S\!E_5(\mathbf{a})$, respectively.
\begin{itemize}
\item 
 Given $v_3$ such that $S\!E_3(sh(\mathbf{a}))v_3=0$, define $v_5$ as $v_5:=\partial^{-r}Xv_3;$
then $S\!E_5(sh(\mathbf{a}))v_5=0$, and conversely given $v_5$ such that $S\!E_5(sh(\mathbf{a}))v_5=0$, define $v_3$ as
$v_3:=X^{-1}\partial^{r}v_5;$
then $S\!E_3(sh(\mathbf{a}))v_3=0$.
\item Let $P=P_3$ be the shift operator for the shift $sh$ of $S\!E_3$ found in  \S 3.
$$P:{\rm Sol}(S\!E_3(\mathbf{a}))\to{\rm Sol}(S\!E_3(sh(\mathbf{a}))).$$ 
When $u_3$ solves $S\!E_3((\mathbf{a}))u_3=0$, 
$$v_3:=Pu_3\quad{\rm solves}\quad S\!E_3(sh(\mathbf{a}))v_3=0,$$ 
turning to $E_5$, 
$$v_5:= \partial^{-r}\cdot Xv_3\quad{\rm solves}\quad S\!E_5(sh(\mathbf{a}))v_5=0.$$ 
Rewriting $v_3=Pu_3$, by $v_3=X^{-1}\partial^{r}v_5$ and  $u_3=X^{-1}\partial^{r}u_5$. we have
\begin{equation}\label{mc_5}\partial^{r}v_5=X\cdot P\cdot X^{-1}\cdot\partial^{r}u_5.
\end{equation}

\item Apply middle convolution as in \S \ref{recipe_3_to_6}, 
we transform  \eqref{mc_5} to an identity like
$$R_2\ v_5 = R_1 \ u_5.$$ 

\item
Apply Euclidean algorithm to $\mathbf{E}=S\!E_5(sh(\mathbf{a}))$ and $R_2$.
\end{itemize}

Thus, we have the following theorem.

\begin{thm}\label{shiftopSE5}
   The equation $S\!E_5$ admits shift operators for the shifts $a\to a\pm1$,
   $b\to b\pm1$, $c\to c\pm1$, $g\to g\pm2$, $p\to p\pm1$, and
   $q\to q\pm1$.
\end{thm}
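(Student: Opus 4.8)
Theorem \ref{shiftopSE5} asserts that $S\!E_5$ admits shift operators for each of the shifts $a\to a\pm1$, $b\to b\pm1$, $c\to c\pm1$, $g\to g\pm2$, $p\to p\pm1$, $q\to q\pm1$.

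The statement asserts the \emph{existence} of shift operators, and the plan is to obtain them from two sources, exactly paralleling the treatment of $S\!E_6$ in Section \ref{SE6}.

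First, the shifts $p\to p\pm1$ and $q\to q\pm1$ come from the block shift operators of $E_5$ recalled in \S\ref{E5Shift}. From the parameterization \eqref{parse5}, only $e_4,e_5,e_6$ involve $p$ and only $e_1,e_2,e_3$ involve $q$, so $p\to p\pm1$ realizes the block shift ${\bm e}_4\to{\bm e}_4\mp{\bm 1}$ and $q\to q\pm1$ realizes ${\bm e}_1\to{\bm e}_1\mp{\bm 1}$; both leave $e_7,e_8$ and the three quantities $e_1-2e_2+e_3$, $e_4-2e_5+e_6$, $e_7-2e_8$ unchanged, hence preserve the defining condition of $S\!E_5$. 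Since the $E_5$ shift operators $P_{\pm0}$, $P_{0\pm}$ of \S\ref{E5Shift} (with their $Q$-partners) are defined for all $E_5$-parameters, in particular on the $S\!E_5$-locus where $E_5=S\!E_5$, their specializations are shift operators of $S\!E_5$ for $q\to q\mp1$ and $p\to p\mp1$.

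Second, for $a\to a\pm1$, $b\to b\pm1$, $c\to c\pm1$ and $g\to g\pm2$ I would exploit the connection of $S\!E_5$ with the Dotsenko--Fateev equation $S\!E_3$ by addition and middle convolution: as shown in \S\ref{SE5}, $S\!E_3=X^{-1}\,\partial^{-3}\,\partial^{r}\,S\!E_5\,\partial^{-r}\,X$ with $X=x^p(x-1)^q$, and a solution $u_5$ of $S\!E_5$ is built from a solution $u_3$ of $S\!E_3$ by $u_5=\partial^{-r}Xu_3$. Since $S\!E_3$ admits shift operators for each of $a\to a\pm1$, $b\to b\pm1$, $c\to c\pm1$, $g\to g\pm2$ by \cite{HOSY2}, I transport a given $S\!E_3$ shift operator $P$ as in \S\ref{recipe_3_to_6}: form $XPX^{-1}$, clear its poles at $x=0,1$ and left-multiply by powers of $\partial$ to reach a $(\theta,\partial)$-form, substitute $\theta\to\theta-r$ to get an identity $R_2\,v_5=R_1\,u_5$, and run the Euclidean algorithm of $\mathbf E=S\!E_5(sh(\mathbf a))$ against $R_2$; the resulting combination $B=y_\nu/r_\nu$ gives, modulo $\mathbf E$, a non-zero operator $P_5=BR_1$ of order $<5$ with $P_5({\rm Sol}(S\!E_5(\mathbf a)))\subset{\rm Sol}(S\!E_5(sh(\mathbf a)))$, well-defined whenever $S\!E_5(\mathbf a)$ is irreducible (the denominator $C\in\mathbb C[\mathbf a]$ does not vanish under irreducibility), which is the generic case. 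This already proves the theorem. As in \S\ref{SE6Shift} it suffices to treat fewer cases: the $b$-shift follows from the $a$-shift by the symmetry $(a,b,p,q,x,\partial)\to(b,a,q,p,1-x,-\partial)$, the $c$-shift by the detour $(c,p,q)\to(c\pm1,p\mp1,q\mp1)\to(c\pm1,p,q)$ composed with the $p$- and $q$-shifts already built, and $g\to g\mp2$ by $(c,g)\to(c\pm1,g\mp2)\to(c,g\mp2)$.

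The operators produced by the recipe are impracticably long, so the actual work---and the main obstacle---is extracting short explicit forms, which are recorded in the Appendix. I would do this exactly as for $S\!E_6$ in \S\ref{SE6Shift}: write $S\!E_5$ in $(\theta,\partial)$-form, posit $P$ and $Q$ with undetermined polynomial coefficients whose $\theta$-degrees are bounded by the Riemann scheme $R_5$ and the known operator orders, convert the shift equation $S\!E_5(sh)\circ P=Q\circ S\!E_5$ into a triangular system of polynomial identities in $\theta$ (the analogue of Lemma \ref{syseq}), and solve it from the top down, peeling off one unknown coefficient per equation, the remaining low-order identities fixing the last few constants. None of this computation is needed for the theorem's existence claim, which already follows from the recipe together with the $E_5$ block shift operators.
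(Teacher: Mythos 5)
Your proposal is correct and follows essentially the same route as the paper: the $p$- and $q$-shifts are obtained by specializing the block shift operators of $E_5$ to the $S\!E_5$ locus (as in \S\ref{SE5ShiftfromE5}), and the $a,b,c,g$-shifts by transporting the $S\!E_3$ shift operators of \cite{HOSY2} through the addition/middle-convolution relation \eqref{SSE3_SE6} via the Euclidean-algorithm recipe, with the direct solution of $EPQE$ (and the symmetry and detour tricks) serving only to produce the shorter explicit forms recorded in the Appendix. No gaps to report.
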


\subsection{Shift operators, S-values and reducibility conditions of $S\!E_5$}\label{SE5Shift}

The shift operators of $S\!E_5$ obtained
for the shifts of $a,b,c,g$ are less complicated than those of $S\!E_6$;
however the operators appearing in the process are still very complicated.
In the following, we give a sketch on how to solve directly the shift equation $(EPQE)$ to get shorter expressions.

\subsubsection{Shift operators coming from those of $E_5$}\label{SE5ShiftfromE5}
Due to the relation $(\ref{parse5})$, 
the shifts $p\to p\pm1$ and $q\to q\pm1$ cause the shifts of the blocks
${\bm e}_4 =\{e_4, e_5, e_6\} \to {\bm e}_4 \mp {\bm 1}
=\{e_1\mp 1, e_2\mp 1, e_3\mp 1\}$
and ${\bm e}_1=\{e_1, e_2, e_3\} \to {\bm e}_1\mp {\bm 1}
 =\{e_1\mp 1, e_2\mp 1, e_3\mp 1\}$
 of the exponents of $E_5$, respectively,
 their shift operators come from those of $E_5$.
 For example,
 \[\def\arraystretch{1.1}\setlength\arraycolsep{3pt}
 \begin{array}{lcl}
P_{p+}&=&x\partial+2a+2b+2c+g+q+p+4=P_{0-}:=x\partial +1-r, \\
P_{q+} &=&(x-1)\partial+2a+2b+2c+g+q+p+4=P_{-0}:=(x-1)\partial +1-r,
 \end{array}\]
where $P_{0-}$ and $P_{-0}$ are defined for $E_5$ in \S\ref{E5Shift}.
We refer to \cite[\S 9.3]{HOSY1} for the inverse
operators $P_{p-}$ and $P_{q-}$.
The S-values $Sv_p$ and $Sv_q$ for the shifts $p\to p+1\to p$
and $q\to q+1\to q$ are listed in \S \ref{SE5ShiftS}.
  
\subsubsection{Shift operators for $a\rightarrow a+1$}\label{SE5Shiftap}

 For the shift $a\rightarrow a+1$, the procedure in
\S \ref{secshift} can be applied with a slight modification as follows:

Due to the correspondence above, the shift
 $\{a \to a+1\}$ in terms of $e_i$ is expressed as
 \[ \begin{array}{ll}sh_{a+}&:=\{e_1\to e_1-2,\ e_2\to e_2-1,
   \ e_4\to e_4-2,\ e_5\to e_5-2,\    e_6\to e_6-2,\\
   &\qquad  e_7\to e_7+2,\ e_8\to e_8+1,\ r\to r-2\}.
 \end{array}\]
 The shifted equation is written as
 $\overline T_s =x\overline T_{0s}+\overline T_{1s}+\overline T_{2s}\partial+\overline T_{3s}\partial^2$, where
\[\def\arraystretch{1.1}\setlength\arraycolsep{3pt}
\begin{array}{lcl}
\overline T_{0s} &=& (\theta -r+3)(\theta -r+4)(\theta -r+5)(\theta +e_7+3)(\theta +e_8+2), \\
\overline T_{1s} &=& (\theta -r+3)(\theta -r+4)B_{51s}, \\
\overline T_{2s} &=& (\theta -r+4)B_{52s}, \\
\overline T_{3s} &=& -(\theta +5-e_1)(\theta +4-e_2)(\theta +3-e_3),
\end{array}\]
and
\[ B_{51s}= {\rm subs}(a=a+1,B_{51}),\qquad B_{52s} ={\rm subs}(a=a+1,B_{52}).\]

  We assume the shift operators are of the form:
\begin{eqnarray*}
    P&=&x^2P_{nn}+xP_{n}+P_0+P_1\partial+P_2\partial^2, \quad Q=x^2Q_{nn}+\cdots+Q_2\partial^2,
\end{eqnarray*}
  as before in \eqref{PQz},
  while the numbering of the components of
  differential equations are different from those used in
  Lemma \ref{syseq}, we need to modify the
  formulas in Lemma \ref{syseq} a little. 
  The equation
  \[ \overline T_s\circ P=Q\circ\overline  T,\]
  is equivalent to the following system of equations:
  \[ E3=0, \quad E2=0,\quad \dots, \quad Ed4=0,\]
  where
\[\def\arraystretch{1.2}\setlength\arraycolsep{3pt}
\begin{array}{rcl}
  E3 &=& \overline T_{0s}[2]P_{nn} - Q_{nn}[1]\overline T_0, \\
E2 &=& \overline T_{0s}[1]P_{n} + \overline T_{1s}[2]P_{nn}  - Q_{nn}\overline T_1 - Q_{n}[1]\overline T_0, \\
E1 &=& \overline T_{0s}P_0 + \overline T_{1s}[1]P_{n} + (\theta +2)\overline T_{2s}[1]P_{nn}  \\
&&\qquad    - \theta Q_{nn}[-1]\overline T_2[-1] - Q_{n}\overline T_1 - Q_0[1]\overline T_0, \\
E0 &=& \theta \overline T_{0s}[-1]P_1[-1] + \overline T_{1s}P_0 + (\theta +1)\overline T_{2s}P_{n} + (\theta +1)(\theta +2)\overline T_{3s}P_{nn} \\
&&\qquad   - \theta (\theta -1)Q_{nn}[-2]\overline T_3[-2] - \theta Q_{n}[-1]\overline T_2[-1] - Q_0\overline T_1 - Q_1(\theta +1)\overline T_0,\\
Ed1 &=& \theta \overline T_{0s}[-1]P_2[-1] + \overline T_{1s}P_1 + \overline T_{2s}P_0[1] + (\theta +2)\overline T_{3s}P_{n}[1]\\
&&\qquad   - \theta Q_{n}[-1]\overline T_3[-1] - Q_0\overline T_2 - Q_1\overline T_1[1] - (\theta +2)Q_2\overline T_0[1], \\
Ed2 &=& \overline T_{1s}P_2 + \overline T_{2s}P_1[1] + \overline T_{3s}P_0[2] - Q_0\overline T_3 - Q_1\overline T_2[1] - Q_2\overline T_1[2], \\
Ed3 &=& \overline T_{2s}P_2[1] + \overline T_{3s}P_1[2] - Q_1\overline T_3[1] - Q_2\overline T_2[2], \\
Ed4 &=& \overline T_{3s}P_2[2] - Q_2\overline T_3[2].
\end{array}\]
We solve these equations applying to $\overline T=S\!E_5$
and the shifted equation $\overline T_s$,
following a procedure similar to that in \S \ref{SE6Shifta} and we get:

\begin{eqnarray*}
P_{nn}&=&Q_{nn}=P_{n}=Q_{n}=0, \\
P_0 &=& (\theta -r+1)(\theta -r+2)(\theta +e_7+1)(\theta +e_8+1), \\
Q_0 &=& (\theta -r+3)(\theta -r+4)(\theta +e_7+2)(\theta +e_8+1), \\
P_1 &=& (\theta -r+2)P_{13}, \quad Q_1 = (\theta -r+4)Q_{13}, \\
P_{13}&=& -2\theta ^2-(4a+7b+c+2g+2q+12)\theta  -(6ab+3aq+6b^2+2bc+3bg+3bq\\
&&+gq+8a+20b+2c+4g+6q+16), \\
Q_{13} &=& P_{13}-2\theta -(5+a+3b+q), \\
P_2 &=& (\theta +3-e_2)(\theta +3-e_3), \quad Q_2 = (\theta +4-e_2)(\theta +3-e_3).
\end{eqnarray*}

\subsubsection{Shift operators for $a\to a-1$}\label{shiftopPam_ofSE5}

Since we know explicitly the shift operator $P_{a+}$
for the shift $a\to a+1$,
to find $P_{a-}$, we solve the equation
  \[ P_{a-}(a=a+1)\circ P_{a+} =D\circ S\!E_5+ Sv_{a+},\]
for some differential operator $D$ and a constant $Sv_{a+}$ 
(depending on the parameters), which will be the S-value.
\bigskip

Computation is done by assuming
that $P_{a-}$ is expressed in $(x,\partial)$-form as
\[P_{a-}=x^3(x-1)^3R_2(x)\partial^4 + x^2(x-1)^2R_3(x)\partial^3
+x(x-1)R_4(x)\partial^2
+ R_5(x)\partial+R_4^1(x),
\]
where $R_k(x)$ is a polynomial of degree $k$ in $x$, and $R_4^1(x)$ of
degree $4$. Then, we get
the polynomial $R_2(x)$ written as
  \[ \begin{array}{rcl}  
    R_2(x)& =&  -(e_7-1)Nx^2 +(e_7-1)(e_2-r-1)(2e_3-3r-3)x \\
          && \qquad +(e_2-r-1)(e_3-r-1)(e_3-r-2), \\
    N & =& -2a^2-3ag-2ab+aq+ap+2+3p-g+4c+q+2c^2-g^2\\
    && \qquad +p^2-bg+cq+pq-cg+3cp.
  \end{array}\]
  The other terms are too long to be printed here.

The $S$-value is seen to be 
\begin{eqnarray*}
  Sv_{a+} &=&  (a+1)(2a+g+2)(a+c+p+2)(2a+2b+g+3)\\
  &&\ \times(2a+2c+g+p+4)(2a+2c+g+p+3)(2a+2b+2c+g+4)\\
  &&\ \times(a+b+c+g+2)(1-r)(2-r)(3-r).
\end{eqnarray*}
The operator $Q_{a-}$ obtained by solving 
$S\!E_5(a=a-1)\circ P_{a-}= Q\circ S\!E_5$ is of the same type as $P_{a-}$.

We note here that $P_{a-}$ is expressed in $(x,\theta ,\partial)-$form as
$$x^4P_{nnnn}+x^3P_{nnn}+x^2P_{nn}+xP_n+P_0+P_1\partial,$$
which is very different from the form of
  $P_{a+}$ for $S\!E_6$. The top and the tail coefficients are given as
\[\def\arraystretch{1.1}
\begin{array}{rcl}
  P_{nnnn}&=& -(e_7-1)(\theta +e_8+1)(\theta +e_7+1)(\theta -r+2)(\theta -r+1)N,\\
  P_1 &=& -(e_2-r-1)(e_3-r-1)(e_3-r-2)(\theta +2-e_1)\\
  && \qquad \times (\theta +2-e_2)(\theta +2-e_3),
\end{array}\]
while the other terms are too long.

\subsubsection{Shift operators for $b\to b\pm 1$
  and other shifts}\label{SE5Shiftb}

The shift operators for the shifts $b\to b+1$, $c\to c + 1$,
and $g\to g+ 2$ are obtained similarly to the previous case $a\to a+1$.
The type of these three operators is $P_0 + P_1\partial
  +P_2\partial^2$, the same as $P_{a+}$.

By computing the operator $P_{b-}$ for the shift $b\to b-1$
similarly to the case $a\to a-1$, 
we get S-value for $\{b\to b+1\to b\}$:
\[\def\arraystretch{1.1}\setlength\arraycolsep{3pt}
\begin{array}{lcl}
  Sv_{b+} &=& (b+1)(2b+g+2)(b+c+q+2)(q+g+4+2c+2b)\\
  &&\  \times (q+g+3+2c+2b)(2a+2b+g+3)(g+2+c+b+a)\\
  &&\  \times (g+4+2c+2b+2a)(2a+2b+2c+p+q+g+4)\\
  &&\  \times (2a+2b+2c+p+q+g+5)(2a+2b+2c+p+q+g+6).
\end{array}\]

To get the operators for the shifts $c\to c-1$ and $g\to g-2$, as we did for $S\!E_6$ in \S\S 5.2.2 and 5.2.3, we make a detour, a composition
  of certain shifts: 
\[\def\arraystretch{1.2}\begin{array}{cl}
  c\to c-1:&(c,p,q)\underset{P_{cpq}}\Rightarrow(c-1,p+1,q+1)
  \underset{P_{p-}(p+1)}\to(c,p-1,q-1)\\ 
  &\underset{P_{q-}(q+1)}\to(c-1,p,q),\\ 
g\to g-2:&(a,g)\underset{P_{ag}}\Rightarrow(a+1,g-2)\underset{P_{a-}(a+1,g-2)}\to(a,g-2),
\end{array}\]
where the operators for $\Rightarrow$ are to be found.
The procedure is almost the same as for $S\!E6$; we give some summaries:

Let $P_{cpq}$ and $P_{rcpq}$ be the operators for the shifts
$$(c,p,q)\to (c-1, p+ 1, q+ 1) \quad {\rm and}\quad (c,p,q)\to (c+1,p-1,q-1),
$$
respectively.
Their $(x,\partial)$-forms are of the type
\[x^3(x-1)^3A\partial^4+x^2(x-1)^2R_3\partial^3+x(x-1)R_4\partial^2
+R_5\partial+R^1_4,\]
where $A$ is a constant $(2a+2b+4c+2g+p+q+4)$ for $P_{cpq}$ and $(p+q)$ for
$P_{rcpq}$, and $R_k$ denotes polynomial of degree $k$ of $x$, $R_4^1$ of 4.
Their $(x,\theta ,\partial)$-forms are 
$x^2P_{nn}+ \cdots + P_1\partial$. 

Let $P_{ag}$ and $P_{rag}$ be the operators for the shifts
$$(a,g)\to (a-1, g+ 2) \quad {\rm and}\quad (a,g)\to (a+1,g-2),
$$
respectively.
Their $(x,\partial)$-forms are of the type
\[x^3(x-1)^2R_1\partial^4+x^2(x-1)R_2\partial^3+xR_3\partial^2
+R^1_3\partial+R^1_2,\]
where $R_k$ and $R_k^1$ denote polynomials of degree $k$ of $x$.
Their $(x,\theta ,\partial)$-forms are
$x^2P_{nn}+ \cdots + P_1\partial$. 
Thus, we can now compute the corresponding
S-values $Sv_{cpq}$ for the shift $$(c,p,q)\to (c-1, p+ 1, q+1)\to (c,p,q) $$ and
$Sv_{ag}$ for the shift $$(a,g)\to (a+1,g-2)\to (a,g)$$ are as follows:
\[\def\arraystretch{1.1}\setlength\arraycolsep{2pt} 
\begin{array}{rcl}
Sv_{cpq}&=& c(p+1)(q+1)(2c+g)(2b+2c+g+q+2)(2a+2c+g+p+2)\\
&&\ \times (2a+2b+2c+g+2)(a+b+c+g+1)\\
&&\ \times (2a+2b+2c+p+q+g+4)^2, \\
Sv_{ag} &=& (g-1)(a+1)(a+c+p+2)(a+b+c+g+1)(2c+g)(2b+g)\\
&&\ \times (q+2+g+2c+2b)(q+1+g+2c+2b)\\
&&\ \times (2a+2b+2c+p+q+g+4)^2.
\end{array}\]
\setlength\arraycolsep{3pt}  

We can next compute the operator $P_{c-}$  by composing
the operators $P_{cpq}$, $P_{p-}$, and $P_{q-}$; and $P_{g-}$ by composing 
$P_{ag}$ and $P_{a-}$  as we did for $S\!E_6$ and we get
the S-values associated for $P_{c-}$ and $P_{g-}$.

\begin{remark} The explicit expressions of the shift operators
  are listed in the file named {\it SE5PQ.txt} in the site
  given in Introduction.
\end{remark}

\subsubsection{S-values of the equation $S\!E_5$ and the reducibility}\label{SE5ShiftS}
We here list the S-values obtained above
partially using  $e_1,\dots, e_6,r$ for simplicity.

\begin{prp}\label{S-valueSE5}
  The S-values of the shift operators for $a,b,c,g,p,q$:
  \[\def\arraystretch{1.1}\setlength\arraycolsep{2pt} 
  \begin{array}{rcl}  
Sv_{p+} &=& (e_1-r)(e_2-r)(e_3-r)(1-r)(2-r), \\
Sv_{q+}&=& (e_4-r)(e_5-r)(e_6-r)(1-r)(2-r), \\
Sv_{a+}&=&  (a+1)(2a+g+2)(2a+2b+g+3)(a+b+c+g+2)(1-r)(2-r)\\
&&\ \times (2a+2b+2c+g+4)(3-r)(e_2-r)(e_3-r)(e_3-r+1), \\
Sv_{b+}&=&  (b+1)(2b+g+2)(2a+2b+g+3)(a+b+c+g+2)(1-r)(2-r)\\
 &&\ \times (2a+2b+2c+g+4)(3-r)(e_5-r)(e_6-r)(e_6-r+1), \\
Sv_{cpq}&=&  c(p+1)(q+1)(2c+g)(a+b+c+g+1)(2a+2b+2c+g+2) \\
 &&\ \times (e_3-r-1)(e_6-r-1)(1-r)^2, \\
Sv_{ag}&=&  (a+1)(g-1)(2c+g)(2b+g)(a+b+c+g+1) \\
&&\ \times  (e_6-r-1)(e_6-r-2)(e_2-r)(1-r)^2, \\
Sv_{c-}&=& (p+1)(q+1)(c+1)(2c+g+2)(2a+2b+2c+g+4)\\
&&\ \times (a+b+c+g+2)(e_2-r)(e_3-r+1)(e_3-r)(e_5-r)\\
&&\ \times (e_6-r+1)(e_6-r)(1-r)(2-r)^2(3-r)^2,\\
Sv_{g-}&=&(g+1)(2a+g+2)(2b+g+2)(2c+g+2)(2a+2b+g+3)\\
&&\ \times (a+b+c+g+3)(a+b+c+g+2)(2a+2b+2c+g+4)\\
&&\ \times (e_3-r+1)(e_3-r)(e_6-r+1)(e_6-r)(1-r)(2-r)(3-r).
  \end{array}\]
  \setlength\arraycolsep{3pt}  
\end{prp}
We have seen that $E_5$ is reducible when one of
$r,\ e_1-r,\ \cdots,\ e_6-r$ is an integer (Theorem \ref{redcondE5}).
In terms of the present parameters for $S\!E_5$, the values are
\[
\begin{array}{ll} \label{5eredinab}
  r=-2a-2b-2c-g-p-q-3,\\
  e_1-r = p+1, \quad e_2-r = a+c+p+2, & e_3-r=2a+2c+p+g+3,\\
  e_4-r = q+1, \quad e_5-r = b+c+q+2, & e_6-r=2b+2c+q+g+3.
\end{array}
\]
Thus we have the following conditions of reducibility of the
equation $S\!E_5$.

\begin{thm} \label{redcondSE5}
  If one of the values
\begin{gather*}
-a,\ -b,\ -c,\ 1/2-g/2,\ a+g/2+1,\ b+g/2+1,\ c+g/2+1, \\[2mm]
\delta+g+2,\ -(\delta+g/2+1),\quad  a+b+g/2+3/2, \\[2mm] 
e_i-r\ (1\le i\le 6),\ r
\end{gather*}
is an integer, then the equation $S\!E_5$ is reducible.
\end{thm}

\subsection{Reducible cases of $S\!E_5$}\label{SE5Red}
Paraphrasing Proposition \ref{factorE5}, by specializing the parameters,
we have

\begin{prp}\label{SE5Redprp1} For $i=1,\dots,6$,
  \[ \def\arraystretch{1.1}
\begin{array}{rccccccc}
 e_i-r=\  &\cdots&  -1,&0,   &1,    &2,    &\cdots\\
 &\cdots&[4,1]&[4,1]A0&[1,4]A0&[1,4]&\cdots\\
 r=\ &\cdots&  -2,&-1,       &0,       &1,       &2,      &\cdots\\
 &\cdots&[1,1,3]&[1,1,3]A0&[1,3,1]A0&[3,1,1]A0&[3,1,1]&\cdots
\end{array}\]
When the factors are free of apparent singularities,
the factor $[4]$ is essentially a specialization $S\!E_4$ of $E_4$, defined in \S $1.1$ (cf. \S $\ref{newSE4}$),
and the factor $[3]$ is essentially $S\!E_3$, defined in \S $1.1$.
\end{prp}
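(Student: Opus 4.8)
The plan is to reduce everything to the corresponding statement for $E_5$, Proposition \ref{factorE5}, exploiting that $S\!E_5$ is by definition the codimension-$2$ specialization of $E_5$ given by $e_1-2e_2+e_3=e_4-2e_5+e_6=e_7-2e_8$ (\S\ref{SE5Def}), a specialization which does not change the spectral type. First I would rewrite the reducibility conditions of Proposition \ref{factorE5} in the parameters $a,b,c,g,p,q$ of $S\!E_5$: from \eqref{parse5} and \eqref{rabpq} one has $e_1-r=p+1$ and $r=-2a-2b-2c-g-p-q-3$, so the condition $e_1-r\in\mathbb{Z}$ of Proposition \ref{factorE5}(1) becomes $p\in\mathbb{Z}$, while $r\in\mathbb{Z}$ of Proposition \ref{factorE5}(2) stays as it is. Since the factorization type, and the presence or absence of singularities of the outermost factors off $\{0,1,\infty\}$, are controlled by the monodromy (Propositions \ref{invsubsp} and \ref{apparentsing}), and since the shift operators of $S\!E_5$ (Theorem \ref{shiftopSE5}) connect consecutive integer values within a fixed factorization regime (Remark \ref{factorSameType} together with Proposition \ref{FactorType} and Theorem \ref{red_atoap}), the two tables of Proposition \ref{factorE5} yield the two tables in the statement --- with the proviso that the specialization may cause an apparent singularity of a factor to disappear, possibly enlarging the range that carries the label $A0$, a point to be settled by the explicit division below.

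The remaining, and genuinely new, task is to identify the small factors at the values where $A0$ holds; this runs exactly parallel to the proofs of Proposition \ref{redSE6}(2) and (3). When $r$ takes the relevant integer value, Proposition \ref{factorE5}(2) already produces a third-order factor that is essentially $E_3$, and I would check that the defining relations $e_1-2e_2+e_3=e_4-2e_5+e_6=e_7-2e_8$ of $S\!E_5$, read off on the Riemann scheme of that factor, restrict to exactly the relations $2e_1-e_2=2e_3-e_4=-e_5+2e_6-e_7$ characterizing $S\!E_3$ in \S 1.1, and, moreover, that the accessory parameter $a_{00}$ of the factor equals the fixed value $c(2a+2c+1+g)(2a+2b+2c+2+g)$ of $S\!E_3$ --- the latter being forced since $S\!E_3$ carries no free accessory parameter. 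Likewise, when $p$ takes the relevant integer value the fourth-order factor of Proposition \ref{factorE5}(1) is essentially $E_4$, and the two relations of $S\!E_5$ restrict, on the Riemann scheme of that factor, to the pair $e_1-2e_2+1=e_3-2e_4+1=e_5-2e_6+e_7+e_8$ defining $S\!E_4$ in \S\ref{newSE4}. Concretely I would carry out the right-division once at, say, $r=0$ and once at $p=0$, and read off the Riemann scheme (and, in the $E_3$ case, the accessory parameter) of the quotient.

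A conceptually cleaner route to the same identification, which I would keep in reserve, is the integral representation: specializing the construction of \S\ref{SE6MC} at $e_9=0$ exhibits $S\!E_5$ as a middle convolution, with the $t_3$-parameter $r$, of $S\!E_3$, and the factorizations correspond to degenerations of the attached plane arrangement exactly as in \S\ref{RedSE6planes}; when $r$ is an integer the plane $t_3=x$ can be removed, the integral over a cycle avoiding it lands in ${\rm Sol}(S\!E_3)$, and the arrangement simultaneously records which lower-dimensional strata acquire integral weight, hence whether $A0$ holds.

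The main obstacle I expect is precisely this exponent-and-accessory-parameter bookkeeping: one must fix, through the division (or the integral), the exact dictionary between the exponents $e_i$ of $S\!E_5$ and those of the fourth- resp.\ third-order factor, then verify that the two-parameter linear constraint of $S\!E_4$ resp.\ $S\!E_3$ --- and, in the $S\!E_3$ case, the special value of $a_{00}$ --- come out as claimed, together with the extra bookkeeping needed to settle the $A0$-ranges, which may differ from those for $E_5$ because the specialization can remove apparent singularities of the factors. Once the dictionary is fixed the verification is a routine substitution; the only content beyond Proposition \ref{factorE5} is that the factor is the specialized equation, not merely $E_4$ or $E_3$.
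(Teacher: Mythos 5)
Your plan coincides with the paper's own (essentially unwritten) argument: the paper obtains this proposition simply by ``paraphrasing Proposition \ref{factorE5}, by specializing the parameters'' via the dictionary \eqref{parse5}, \eqref{rabpq} (so $e_1-r=p+1$), which is exactly the reduction you propose, and the identification of the factors as $S\!E_4$ and $S\!E_3$ is the same restriction-of-the-defining-relations argument used for Proposition \ref{redSE6}. Your extra checks (the accessory parameter of the third-order factor, and settling the $A0$ ranges by explicit division, which indeed require care since the printed $S\!E_5$ table is not a verbatim copy of the $E_5$ one) only supply detail the paper leaves implicit.
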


\begin{prp}\label{factorSE5} Let $\Delta$ be one of $\{-a, -b, c+g/2+1, -(\delta+g/2+1)\}$, and $a+b+g/2+3/2$. Then
  \[ \def\arraystretch{1.1}
 \begin{array}{rcccccc}
   \Delta=&\cdots&  -1,&0,   &1,    &2,    &\cdots\\
    &\cdots\ &\ [4,1]\ &\ [4,1]A0\ &\ [1,4]A0\ &\ [1,4]\ &\ \cdots\\
   \end{array}\]
 When the factors are free of apparent singularities,
 the factor $[4]$ is essentially equal to a specialization of ${\rm ST}_4$, defined in \S $1.1$; ${}_4E_3$ only for the last case $a+b+g/2+3/2=0,1$.
\end{prp}

\begin{proof} (Sketch)
When $a=-1$,  $S\!E_5$ factors as [4,1], the factor $[1]=\partial+(2b+2c+g+q+2)/x+p/(x-1)$, and [4] has local exponents
\[ \def\arraystretch{1.1} \begin{array}{cccccccccc}
0,&\ 1,&\ -2b-q-1,&\ -1-c-2b-g-q;\\ 
0,&\ 1,&\ -b-c-g-p,&\ -1-p-2b-2c-g;\\ 
s,&\ s+1,&\ 1+b,&\ 1+2b+g,
\end{array}\]
where $s=2 + 2b + q + 2c + g + p$.
We see that it has the spectral type $[211,\ 211,\ 211]$,
which shows it is a specialization of ${\rm ST}_4$ with five free exponents.
\par
When $g=-2a-2b-3$,  $S\!E_5$ factors as [4,1],
  the factor $[1]=\partial$, and [4] has local exponents
\[ \def\arraystretch{1.1} \begin{array}{ccccccc}
  0,&\  -1-2c-q,&\  -c+a-q,&\  -2b-q-2;\\ 
0,&\  -p-1-2c,&\  -p+b-c,&\  -2a-2-p;\\ 
2+2c+p+q,&\ 3+2c+p+q,&\ 4+2c+p+q,&\ a+b+3.
\end{array}\]
Exchanging $x=1$ and $\infty$, and multiplying $(x-1)^{2+2c+p+q}$
from the right, we see that it has the spectral type $[1111,\  31,\  1111]$,
which shows it is a specialization of ${}_4E_3$ with five free exponents.
\end{proof} 

\begin{prp}\label{factorSE5cont32}Let $\Delta$ be one of $\{-c, 1/2-g/2, a+g/2+1, b+g/2+1, a+b+c+g+2\}$. Then
\[ \def\arraystretch{1.1} \begin{array}{rcccccc} 
 \Delta=&\cdots&  -1,&0,   &1,    &2,    &\cdots\\
    &\cdots\ &\ [3,2]\ &\ [3,2]A0\ &\ [2,3]A0\ &\ [2,3]\ &\ \cdots\\
\end{array}\]
When the factors are free of apparent singularities,
the factor $[3]$ is essentially ${}_3E_2$,
after exchanging $x=1$ and $\infty$. Refer to \S $\ref{SE5divE2}$.
\end{prp}

\subsubsection{When $S\!E_5$ is divisible by $E_2$ from the right, proof of Proposition \ref{factorSE5cont32}}\label{SE5divE2}  

Consider when $S\!E_5$ is divisible by Gauss equation $E_2$ on the right.
Since
  \[ S\!E_5=x\overline T_{0} + \overline T_{1} + \overline T_{2}\partial + \overline T_{3}\partial^2,\]
where $\overline T_{0}$, $\overline T_{1}$, $\overline T_{2}$ and $\overline T_{3}$ are 
given in \S\ref{SE5Shift} and Gauss equation is 
  \[ E_2 =E_0 + E_1\partial;\qquad   E_0=(\theta +A)(\theta +B), \quad E_1=-(\theta +C),\]
as in \S\ref{EarGauss}, when $S\!E_5$ is divisible by $E_2$,
we have an equation
  \[ S\!E_5=  Q\circ E_2\]
for some operator
  \[Q=xQ_0 + Q_1 + Q_2\partial,\]
where $Q_i$ are polynomials of $\theta $.
In this case, by the identity
 \[ \begin{array}{l}   
  Q\circ E_2=xQ_0E_0+Q_0(\theta -1)E_1(\theta -1)\theta +Q_1E_0\\
  \hskip60pt +Q_1E_1\partial+Q_2E_0(\theta +1)\partial
  +Q_2E_1(\theta +1)\partial^2,
  \end{array}\]
we see that
\begin{eqnarray*}
 -Q_2\times (\theta +1+C)   &=& -(\theta +3-e_1)(\theta +3-e_2)(\theta +3-e_3), \\
 Q_0\times (\theta +A)(\theta +B) &=& (\theta +1-r)(\theta +2-r)(\theta +3-r)(\theta +e_7+1)(\theta +e_8+1);
\end{eqnarray*}
Thus, the possibility of $A$, $B$ and $C$ is
\[
\begin{array}{rcl}  
  A,\quad B &=& 1-r, \ 2-r, \ 3-r, \ e_7+1, \ e_8+1, \\
    C    &=&  2-e_1, \ 2-e_2, \ 2-e_3.
\end{array}
\]
For each possible choice of $A$, $B$, and $C$, we check
whether $S\!E_5$ is divisible by $E_2$ (by computer assistance),
and we find the following cases:
\[
\begin{array}{lll}
\text{case} & \text{condition} & E_2(A,B,C) \\ \hline
  \noalign{\vskip2pt}
C1 & c=0     & A=1-r, \ B=e_7+1, \ C=2-e_1, \\
C2 & g=-2b-2 & A=1-r, \ B=e_7+1, \ C=2-e_1, \\
C3 & g=-2a-2 & A=1-r, \  B=e_7+1, \ C=2-e_3, \\
C4 & g=-a-b-c-2\quad & A=1-r,\  B=e_7+1, \ C=2-e_3, \\
C5 & g=1 & A=1-r, \ B=e_8+1, \ C=2-e_2. \\
  \noalign{\vskip2pt} \hline   
\end{array}
\]
Remark that there may be other choices of parameters $\{a,b,c,g,p,q\}$,
we here list the condition that is represented
as one linear form of parameters called a codimension-one condition.

Recall that the set of parameters 
$\{e_1, e_2, e_3, e_4, e_5, e_6, e_7, e_8, r\}$
and that of $\{a, b, c,\quad g, p, q\}$
are related as in \eqref{rabpq} and \eqref{parse5} in \S\ref{SE5Shift}.

We see that the quotient is written by the generalized hypergeometric equation ${}_3E_2$;
to give identities concretely, we modify this equation by changing
the coordinate $x$ to $1/(1-x)$ so that  ${}_3E_2$ is rewritten as
\[
\begin{array}{lcl}
GH_3&=&x^3(x-1)^2\partial^3+x^2(x-1)((4-a_0-a_1-a_2+b_1+b_2)x\\
 &&  -3+a_0+a_1+a_2)\partial+(2(1-a_0-a_1-a_2+b_1+b_2)x^2 \\
 &&\quad    +(b_1b_2-2b_2-2b_1-a_0a_1-a_1a_2-a_2a_0+3a_0+3a_1+3a_2-3)x \\
 &&\quad    +1+a_0a_1+a_1a_2+a_2a_0-a_0-a_1-a_2)\partial\\
 &&  -a_0a_1a_2.
\end{array}
\]

Then, we have the following identities:
\[
\begin{array}{lll}
\text{case}  &  Q=GH_3(a_0,a_1,a_2,b_1,b_2) \\ \hline
    \noalign{\vskip2pt}
C1   & a_0=a+p+2, a_1=2a+g+p+3, a_2=2a+2b+g+p+q+5,\\ &
         b_1=2a+p+3, b_2=2a+b+g+p+4 \\
C2   & a_0=a+c+p+2, a_1=2a+2c-2b+p+1, a_2=2a+2c+p+q+3,\\ &
         b_1=2a+2c+p+3, b_2=2a-b+c+p+2 \\
C3   & a_0=p+1, a_1=a+c+p+2, a_2=2b+2c+p+q+3,\\ &
        b_1=2a+p+3, b_2=b+c+p+2 \\
C4   & a_0=p+1, a_1=a+c+p+2, a_2=a+b+c+p+q+3,\\ &
        b_1=a+p+2, b_2=a+b+c+p+3 \\
C5   & a_0=p+1, a_1=2a+2c+p+4, a_2=2a+2b+2c+p+q+6,\\ &
        b_1=2a+p+3, b_2=2a+2b+2c+p+6. \\
  \noalign{\vskip2pt}\hline
\end{array}
\]


\section{Review of fundamental properties of $E_4$}
\label{E4ap1}

The equation
$$E_4=E_4(e_1,\dots,e_7)
  =\mathcal T_0+\mathcal T_1\partial+\mathcal T_2\partial^2$$
is defined as the equation with the Riemann scheme
$$R_4:\left(\begin{array}{llcll}x=0:&0&1&e_1&e_2\\ x=1:&0&1&e_3&e_4\\ x=\infty:&e_8&e_5&e_6&e_7\end{array}\right),\quad e_1+\cdots+e_7+e_8=4,
$$
where
\[\def\arraystretch{1.1}
\begin{array}{rcl}
    \mathcal T_0 &=& (\theta +e_5)(\theta +e_6)(\theta +e_7)(\theta +e_8),\\
    \mathcal T_1 &=& -2\theta ^3+\mathcal T_{12}\theta ^2+\mathcal T_{11}\theta +\mathcal T_{10},\\
    \mathcal T_{12} &=& s_{11}-s_{13}-5,\\
    \mathcal T_{11} &=& 3s_{11}-s_{21}+s_{22}-s_{23}-8,\\
    \mathcal T_2 &=& (\theta -e_1+2)(\theta -e_2+2),
\end{array}
\]
and $\mathcal T_{10}$ is the accessory parameter given by
\[\def\arraystretch{1.2} \begin{array}{rcl}
  54\mathcal T_{10}&=&-4s_{11}^3+ 4s_{12}^3
  - (6s_{12} - 27)s_{11}^2+(6s_{11}- 27)s_{12}^2\\
&& \ + (9s_{21} - 18s_{22} + 9s_{23} + 36)s_{11}
 - (9s_{22}-18s_{21}  + 9s_{23} - 18)s_{12}\\
&&\  - 81s_{21} + 81s_{22} - 27s_{23} - 27s_{3}- 135, 
\end{array}\]
where
\begin{gather*} 
s_{11}=e_1+e_2,\  s_{12}=e_3+e_4,\  s_{13}=e_5+e_6+e_7+e_8, \\
s_{21}=e_1e_2,\ s_{22}=e_3e_4,\  s_{23}=e_5e_6+e_5e_7+e_5e_8+e_6e_7+e_6e_8+e_7e_8,\\
s_3=e_6e_7e_8+e_5e_7e_8+e_5e_6e_8+e_5e_6e_7.
\end{gather*}
Note that these $s_*$ are a bit different from \eqref{e19tos}.
The equation $E_4$ has $(x,\partial)$-form as
$$E_4=x^2(x-1)^2\partial^4+p_3\partial^3+p_2\partial^2
+p_1\partial+p_0,\quad p_0=e_5e_6e_7e_8.$$

\subsection{A shift operator of $E_4$}

It is easy to check that 
\[E_4(e') \circ \partial = \partial\circ E_4(e),
\quad e'=(e_1-1,\dots,e_4-1,e_5+1,\dots,e_8+1),
\]
which, in particular, implies $E_4$ has differentiation symmetry.
Thus, $\partial$ is the shift operator for the shift $e\to e'$. 
Set
$$R=x^2(x-1)^2\partial^3+p_3\partial^2+p_2\partial+p_1.$$
Then we have
  \[R\circ \partial = E_4-p_0 \equiv -p_0\quad{\rm mod}\ E_4.\]
  This implies that $R$ gives the inverse of the map
  $\partial:{\rm Sol}(E_4(e)\longrightarrow {\rm Sol}(E_4(e'))$,
  and that the corresponding S-value is $p_0$.

\begin{prp}\label{E4red}If one of
  $$e_5,\ e_6,\ e_7,\ e_8$$ is an integer,
  then the equation $E_4$ is reducible.
  \end{prp}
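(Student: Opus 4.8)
The plan is to follow, for $E_4$, the same route that gave Theorems~\ref{redcondE6} and~\ref{redcondE5}: combine the vanishing-Svalue criterion (Proposition~\ref{Sred}) with the propagation of reducibility along shift relations (Theorem~\ref{red_atoap}), the only shift operator at our disposal here being the differentiation $\partial$.

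First I would record what is already established immediately above the statement. With $e'=(e_1-1,\dots,e_4-1,e_5+1,\dots,e_8+1)$, the operators $E_4(e)$ and $E_4(e')$ are joined by the shift relations $E_4(e')\circ\partial=\partial\circ E_4(e)$ and $R\circ\partial=E_4(e)-p_0$, where $R=x^2(x-1)^2\partial^3+p_3\partial^2+p_2\partial+p_1$ and $p_0=e_5e_6e_7e_8$. Because $\partial$, $R$ and $p_0$ have coefficients polynomial in $e_1,\dots,e_8$, these relations hold for every value of the parameters; in particular the forward shift $\partial$ and its inverse $R$ exist unconditionally, and the Svalue of this pair is $p_0=e_5e_6e_7e_8$ up to sign.

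The base case is then immediate: if one of $e_5,e_6,e_7,e_8$ equals $0$, the Svalue $p_0$ vanishes, so by Proposition~\ref{Sred} the equation $E_4(e)$ is reducible. For the general case, suppose $e_5=n$ is a nonzero integer. I would iterate the shift $|n|$ times: apply the inverse operator $R$ (which lowers $e_5$ by $1$) when $n>0$, and apply $\partial$ (which raises it by $1$) when $n<0$. This produces a finite chain of shift relations connecting $E_4(e)$ to some $E_4(\tilde e)$ with $\tilde e_5=0$ (the values of the other exponents of $\tilde e$ are immaterial). By the base case $E_4(\tilde e)$ is reducible, whence, running Theorem~\ref{red_atoap} back along the chain one link at a time, $E_4(e)$ is reducible as well. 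The cases $e_6,e_7,e_8\in\mathbb Z$ are treated identically, or obtained at once from the symmetry of $E_4$ under permutations of $(e_5,e_6,e_7,e_8)$ --- a symmetry visible in $\mathcal T_0$ and in the symmetric-function formulas for $\mathcal T_{12}$, $\mathcal T_{11}$ and $\mathcal T_{10}$.

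I do not anticipate a real obstacle. The one point to be careful about is that each application of the shift moves all of $e_1,\dots,e_8$ simultaneously, so one should make sure that steering the chosen exponent to $0$ never makes a shift relation degenerate --- but that cannot happen, precisely because these relations are polynomial identities in the parameters. It is perhaps worth noting that the argument only yields reducibility loci attached to a single one of $e_5,\dots,e_8$, and not conditions on sums of exponents of the kind appearing for $E_5$ and $E_6$, which is consistent with $E_4$ carrying a genuine accessory parameter $\mathcal T_{10}$.
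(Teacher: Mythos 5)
Your argument is correct and is essentially the paper's own: it uses the shift relation $E_4(e')\circ\partial=\partial\circ E_4(e)$ together with the inverse $R$ and the Svalue $p_0=e_5e_6e_7e_8$, then invokes Proposition \ref{Sred} for the vanishing case and Theorem \ref{red_atoap} to propagate reducibility along the chain of integer shifts. The paper leaves the iteration implicit (as it does for $E_6$ and $E_5$), but the reasoning is the same.
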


We could not find other shift operator than $\partial$.

\subsection{Reducible cases of $E_4$}\label{E4ap1Red}

\begin{prp}\label{E4redE3}
\[\def\arraystretch{1.1} \begin{array}{cccccccc}
e_5,\dots,e_8=\ \ &\  \cdots   \ &\ -1    \ &\  0     \ &\  1     \ &\  2        \ &\ \cdots\\
  \ &\    [31]    \ &\  [31] \ &\ [31]A0\ &\ [13]A0 \ &\ [13]\ &\   [13]  
\end{array}
\]
  In particular, when $e_7=0,1$, we have
\[\def\arraystretch{1.1} \begin{array}{lcl}E_4(e_7=0)&=&
    E_3(e_1-1,\dots,e_4-1,e_5+1,e_6+1)\circ\partial,\\
    E_4(e_7=1)&=& \partial\circ E_3(e).
\end{array}\]
\end{prp}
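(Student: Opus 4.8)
The plan is to reduce to shifts of the single exponent $e_7$, to verify the two displayed identities, and then to propagate along the shift orbit. Since the coefficients $\mathcal T_0,\mathcal T_1,\mathcal T_2$ of $E_4$ — including the accessory parameter $\mathcal T_{10}$ — involve $e_5,e_6,e_7,e_8$ only through their elementary symmetric functions $s_{13},s_{23},s_3$, the operator $E_4$ is invariant under every permutation of $\{e_5,e_6,e_7,e_8\}$; hence it suffices to treat the case in which $e_7$ is the integer. Reducibility in all such cases is Proposition \ref{E4red} — equivalently it follows from Theorem \ref{red_atoap} applied along the orbit, together with the vanishing of the Svalue $p_0=e_5e_6e_7e_8$ at $e_7=0$ — so what remains is to identify the factorization types.

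First I would settle the case $e_7=1$. Both $\partial\circ E_3(e_1,\dots,e_6)$ and $E_4$ specialised at $e_7=1$ are Fuchsian operators of order $4$, singular only on $\{0,1,\infty\}$ and carrying a single accessory parameter, so it is enough to check that their Riemann schemes agree and that their accessory parameters agree. For the Riemann scheme: near each singular point $E_3$ sends $x^{\mu}$ to $\mathrm{ind}_{E_3}(\mu)\,x^{\mu-1}$ up to lower order (and the analogous statements at $1$ and $\infty$), where $\mathrm{ind}_{E_3}$ is the cubic indicial polynomial with roots $\{0,e_1,e_2\}$, $\{0,e_3,e_4\}$, $\{e_5,e_6,\,3-(e_1+\cdots+e_6)\}$ respectively; composing $\partial$ on the left multiplies each of these by the factor $(\mu-1)$, so $\partial\circ E_3$ has exponents $\{0,1,e_1,e_2\}$, $\{0,1,e_3,e_4\}$, $\{1,e_5,e_6,\,3-(e_1+\cdots+e_6)\}$, and since $3-(e_1+\cdots+e_6)$ equals $e_8$ once $e_7=1$ by the Fuchs relation \eqref{Fuchsrelation}, this is precisely $R_4$ at $e_7=1$. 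The accessory parameters give a single cubic identity in $e_1,\dots,e_6$, relating $\mathcal T_{10}|_{e_7=1}$ to the accessory parameter $a_{00}$ of $E_3$ (corrected by the lower-order terms produced by the composition with $\partial$); I would verify this directly from the explicit formula for $54\mathcal T_{10}$. These two checks force $E_4(e_7=1)=\partial\circ E_3(e_1,\dots,e_6)$. The case $e_7=0$ is then purely formal: apply the differentiation symmetry $E_4(e')\circ\partial=\partial\circ E_4(e)$, with $e'=(e_1-1,\dots,e_4-1,e_5+1,\dots,e_8+1)$, at a point where $e$ has $e_7=0$ (so $e'$ has $e_7=1$); substituting the $e_7=1$ identity turns the left side into $\partial\circ E_3(e_1-1,\dots,e_4-1,e_5+1,e_6+1)\circ\partial$, and cancelling the left factor $\partial$ in the domain $\mathbb C(x)[\partial]$ gives the asserted $[3,1]$ factorization.

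To extend to all integers $e_7$, note that $E_4$ at $e_7=k$ and at $e_7=k+1$ are linked by the shift relation $E_4(e')\circ\partial=\partial\circ E_4(e)$, whose Svalue is $p_0=e_5e_6e_7e_8$; for generic $e_5,e_6,e_8$ this is non-zero exactly when the common value of $e_7$ along the step is non-zero, so Proposition \ref{FactorType} propagates the factorization type from $e_7=1$ upward (yielding $[13]$ for all $e_7\ge1$) and from $e_7=0$ downward (yielding $[31]$ for all $e_7\le0$); the type cannot cross the gap between $0$ and $1$ because the Svalue vanishes there, which is exactly what permits the change of type. At $e_7=0,1$ the factors are a genuine $E_3$ and $\partial$, whence the $A0$; for the remaining integers the order-$3$ factor is a shift-image of $E_3$ and its singular points off $\{0,1,\infty\}$ are apparent by Proposition \ref{apparentsing}. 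The only genuinely non-formal step is the accessory-parameter identity in the case $e_7=1$: the rest is the symmetry of $E_4$, routine bookkeeping of indicial polynomials, or cancellation in $\mathbb C(x)[\partial]$, whereas the agreement of $\mathcal T_{10}|_{e_7=1}$ with the accessory parameter of $\partial\circ E_3$ is a cubic polynomial identity most easily confirmed with computer algebra. Alternatively, the whole proposition can be deduced from the construction of $E_4$ out of $E_3$ by addition and middle convolution in \cite{HOSY1}, under which reducibility and factorization types behave predictably.
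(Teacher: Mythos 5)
Your proof is correct and follows essentially the route this series of papers takes for such tables (the proposition itself is only reviewed here from Part I): establish the explicit identities at $e_7=0,1$ — where the one substantive input is the symbolic check that the middle coefficient of $\partial\circ E_3=(\theta+1)S_n+S_0(\theta+1)\partial+S_1(\theta+1)\partial^2$ matches $\mathcal T_1|_{e_7=1}$ including the accessory parameter, the matchings of $\mathcal T_0$ and $\mathcal T_2$ being immediate — then get the $e_7=0$ case from the differentiation symmetry by cancelling $\partial$ in $\mathbb{C}(x)[\partial]$, and propagate the type along the $\partial$-shift using the Svalue $p_0=e_5e_6e_7e_8$ with Propositions \ref{Sred} and \ref{FactorType}, exactly as in Remark \ref{factorSameType}; the initial reduction to $e_7$ via the symmetry of $E_4$ in $e_5,\dots,e_8$ is also sound. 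Comparing $(\theta,\partial)$-forms directly, as above, is slightly cleaner than arguing "same Riemann scheme plus same accessory parameter implies same operator," but both come down to the same cubic identity, which (as you note) is a finite computation requiring the explicit $a_{00}$ from Part II.
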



\section{$S\!E_4$: a specialization of $E_4$}\label{newSE4} 

$S\!E_4$ is, by definition, a specialization of  $E_4(e)$:
  $$e_1-2e_2+1=e_3-2e_4+1=e_5-2e_6+e_7+e_8,\quad e_1+\cdots+e_7+e_8=4.$$
We parameterize the 5 $(=7-2)$ free parameters by $a,b,c,g, u $ as:
$$\begin{array}{lll}e_1=2 a+2 c+g- u +2,&e_2=a+c- u +1,\\
e_3=2 b+2 c+g- u +2,\quad&e_4=b+c- u +1,\\
e_5= u +1,&e_6= u -a-b-2 c-g-1,\\e_7= u -2 c,&
e_8= u -2-g-2 c-2 b-2 a.\end{array}$$

\subsection{Shift operators of $S\!E_4$}
\begin{thm}\label{shiftopSE4}
   The equation $S\!E_4$ admits shift operators for the shifts $a\to a\pm1$, $b\to b\pm1$, $c\to c\pm1$, $g\to g\pm2$, and    $ u \to  u \pm1$.
\end{thm}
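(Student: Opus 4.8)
The plan is to follow the scheme already used for $S\!E_6$ (Theorem~\ref{shiftopSE6}) and $S\!E_5$ (Theorem~\ref{shiftopSE5}), splitting the required shift operators according to their origin. The shift $u\to u\pm1$ comes for free from $E_4$: reading off the parameterization of $S\!E_4$, the shift $u\to u+1$ sends $(e_1,\dots,e_4;e_5,\dots,e_8)$ to $(e_1-1,\dots,e_4-1;e_5+1,\dots,e_8+1)$, which is exactly the differentiation symmetry of $E_4$ recorded in \S\ref{E4ap1}, namely $E_4(e')\circ\partial=\partial\circ E_4(e)$. Hence $P_{u+}=Q_{u+}=\partial$, and $P_{u-}$ is its inverse $R=x^2(x-1)^2\partial^3+p_3\partial^2+p_2\partial+p_1$ (notation of \S\ref{E4ap1}), which exists by Proposition~\ref{inverseofshiftop}. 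One only has to check that the defining relations $e_1-2e_2+1=e_3-2e_4+1=e_5-2e_6+e_7+e_8$ and $e_1+\cdots+e_8=4$ of $S\!E_4$ are preserved under this shift, which is immediate, each of the three expressions changing by $1$ while $\sum e_i$ stays $4$.

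Existence of shift operators for $a\to a\pm1$, $b\to b\pm1$, $c\to c\pm1$, $g\to g\pm2$ is obtained, as for $S\!E_5$ and $S\!E_6$, from the connection with the Dotsenko--Fateev equation $S\!E_3$ by addition and middle convolution. Since $S\!E_3$ carries shift operators for $a,b,c,g$ by \cite{HOSY2}, the recipe of \cite{Osh} Chapter~11 --- concretely, the procedure of \S\ref{shiftop_SE3_to_SE6}: conjugate the $S\!E_3$ shift operator by the addition factor $X=x^p(x-1)^q$, multiply by the needed powers of $\partial$ to reach a $(\theta,\partial)$-form, perform the middle convolution by the substitution $\theta\to\theta-\mu$, and strip off the surplus left factor by the Euclidean algorithm against $S\!E_4$ --- produces a nonzero operator of order $<4$ carrying ${\rm Sol}(S\!E_4(\mathbf a))$ into ${\rm Sol}(S\!E_4(sh(\mathbf a)))$. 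This already proves the theorem.

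To replace those unwieldy operators by short explicit ones I would, exactly as in \S\ref{SE6Shift} and \S\ref{SE5Shift}, solve the shift equation $(EPQE)$ directly for $E=S\!E_4=\mathcal T_0+\mathcal T_1\partial+\mathcal T_2\partial^2$. Posit $P$ and $Q$ in an $(x,\theta,\partial)$-form analogous to \eqref{PQz} but of order $<4$ (with extra $x$-powers for the decreasing shifts, reflecting the asymmetry between $P_{a+}$ and $P_{a-}$ already seen for $S\!E_5$), the coefficients being polynomials in $\theta$ of the degrees forced by the orders of $\mathcal T_0,\mathcal T_1,\mathcal T_2$. Expanding $\mathcal T_s\circ P$ and $Q\circ\mathcal T$ in $(x,\theta,\partial)$-form and matching the coefficients of the $x^i\partial^j$ turns $(EPQE)$ into a finite system of polynomial identities in $\theta$, an $S\!E_4$-analogue of Lemma~\ref{syseq}. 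This system is then solved hierarchically: the extreme equations factor through $\mathcal T_0$ and $\mathcal T_2$ and determine the leading coefficients of $P,Q$ up to a constant, the next equations determine the following coefficients up to a linear form in $\theta$, and so on, the residual equations fixing the remaining constants. For $b\to b\pm1$ I would use the symmetry $(a,b,x,\partial)\to(b,a,1-x,-\partial)$ of $S\!E_4$; for $c\to c\pm1$ and $g\to g\pm2$ I would take detours composing already-constructed operators, e.g.\ a combined shift moving $c$ together with $u$, resp.\ $c$ together with $g$, followed by a $u$-shift, resp.\ a $c$-shift, in direct analogy with the detours of \S\ref{SE6Shift}.

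The main obstacle is the last part. The existence content (the first two paragraphs) is routine once the earlier sections are available; the work lies in carrying the hierarchical solution of the coefficient system through to the end, in presenting the answers in the variables $e_1,\dots,e_8$ so that they stay readable, and --- most delicately --- in checking at the Euclidean-division step that the scalar denominators that appear do not vanish whenever $S\!E_4(\mathbf a)$ is irreducible, so that the operators produced are genuinely nonzero of order $<4$. This step requires computer assistance.
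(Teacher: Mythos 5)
Your route is, in outline, the paper's own: the $u$-shift is read off from the differentiation symmetry $E_4(e')\circ\partial=\partial\circ E_4(e)$ of \S\ref{E4ap1} (and your check that the two defining relations of $S\!E_4$ and the Fuchs sum are preserved is exactly the needed verification), the $(a,b,c,g)$-shifts are transported from those of $S\!E_3$ by the recipe of \cite{Osh} Chapter 11, the $b$-shifts are reduced to the $a$-shifts by the symmetry $(a,b,x,\partial)\to(b,a,1-x,-\partial)$, and the usable explicit operators are then obtained by solving $(EPQE)$ directly with a hierarchical coefficient match; this is what \S\ref{newSE4} does (the paper prints the $a\to a\pm1$ operators and refers to data files for $c\to c\pm1$, $g\to g\pm2$ rather than taking your proposed detours, which in any case would require constructing combined-shift operators by the same direct method).

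One concrete correction to your existence step, however: you transport the $S\!E_3$ shift operators by first conjugating with the addition factor $X=x^p(x-1)^q$. For $S\!E_4$ there are no parameters $p,q$ at all, and performing a nontrivial addition before the middle convolution removes the exponent $0$ at both $x=0$ and $x=1$, so that $d=-3$ and the convolved equation has order $6$ --- that is precisely the construction of $S\!E_6$ (respectively $S\!E_5$ for special $\mu$), not of $S\!E_4$. The correct relation, stated in \S\ref{newSE4}, involves no addition: $S\!E_4=\partial^{u}\cdot\partial\,S\!E_3\cdot\partial^{-u}$, so the recipe must be run with $X=1$, conjugating by powers of $\partial$ only (e.g. $R_1=\partial^{u}\cdot\partial^{2}P_{ap}\cdot\partial^{-u}$), before the Euclidean-division step against $S\!E_4(sh(\mathbf a))$. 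As literally written, your transported operator would land in the solution space of the wrong (order-6) equation; with $X=1$ the argument becomes the paper's, and the remainder of your proposal goes through as described.
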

The shift operators for the shifts of $(a,b,c,g)$ can be obtained  exactly in the same way as in \S \ref{shiftop_SE3_to_SE6} from those of $S\!E_3$:
$$S\!E_4=\partial^ u \cdot \partial S\!E_3 \cdot\partial^{- u },\quad R_1=\partial^ u \cdot \partial^2P_{ap}\cdot\partial^{- u },\dots$$
Or by solving directly the shift equation $(EPQE)$:
 let $E_s$ denote a shifted operator and compute
 $$    X:=E_s\circ P-Q\circ S\!E_4 $$
  assuming $P$ and $Q$ are polynomials of $\partial$ up to degree 3
  with coefficients of $x$ with degree at most 5; Let $V$ be the
  set of coefficients.
  Develop $X$ as polynomials of $x$ and $\partial$, and let $S$ be the set   of coefficients. Then solve the set of equations $S$ relative to $V$.

For example, for the shift $a\to a-1$, this set yields
shit operators as:
$$\begin{array}{ll}
P_{a-}&=x(x-1)^2\partial^3+(x-1)((e_6-2e_4+4)x+e_2-2)\partial^2
  + ((1-e_3)(1-e_4)x\\& - c_p(x-1))\partial+e_6e_8(e_1+e_7+ u -1):\\
Q_{a-}&=x(x-1)^2\partial^3+(x-1)((e_6-2e_4+7)x+e_2-3)\partial^2
   + ((2-e_3)(2-e_4)x\\& - c_q(x-1))\partial+(e_6+1)(e_8+2)(e_1+e_7+ u -1):
  \end{array}$$
where
$$\begin{array}{ll}
  c_p&:=-2 u^2+(g+5c+3b+2a) u +4a^2+(6+2b+2c+4g)a-2c^2\\
&+(g+3-2b)c+g^2   +(3+b)g+2+3b:\\
  c_q&:=-2 u^2+(g+5c+3b-4+2a) u +4a^2+(4+2c+4g+2b)a-2c^2\\
&+(g+6-2b)c+g^2   +(2+b)g-2+4b:
  \end{array}$$

For the reverse shift $a\to a+1$, we follow 
the arguments in \S \ref{secshift} by relying on Lemma \ref{syseq}.
Similar arguments apply to the cases
  $b \to b\pm1$, $c\to c\pm1$ and $g\to g\pm2$.

\begin{remark}
The explicit expressions of these shift operators are not given in this
paper, while those are listed in the file named {\it SE4PQ.txt}
in the site given in Introduction.
\end{remark}

\subsection{S-values and reducibility conditions of $S\!E_4$}

Thanks to the explicit expressions of the shift operators, we get
\begin{thm}
$$\begin{array}{ll}
Sv_{u+} :=&( u +1) (1+a+2 c+g+b- u ) (2 c- u ) (2+2 b+2 a+g+2 c- u ):\\

Sv_{a+}:=&(a+1) (2 a+g+2) (2 a+2 c+g+3) (a+b+c+g+2) \\
&(- u +3+g+2 c+2 b+2 a) (a+b+2 c+g- u +2) (4+2 b+2 a+g+2 c- u ):\\
  
Sv_{c+}:=& (2 c+g+2) (2 b+2 c+g+3) (2 a+2 c+g+3) (a+b+c+g+2)\\
&(2 c+2- u ) (2 c- u +1) (4+2 b+2 a+g+2 c- u ) (3+a+2 c+g+b- u )\\
& (a+b+2 c+g- u +2) (- u +3+g+2 c+2 b+2 a):\\

Sv_{g+}:=&-(g+1) (g+2 c+2) (2 b+g+2) (2 b+2 c+g+3) (2 a+g+2)\\
& (2 a+2 c+g+3) (g+3+c+b+a) (a+b+c+g+2)(- u +3+g+2 c+2 b+2 a)\\
& (3+a+2 c+g+b- u ) (a+b+2 c+g- u +2) (4+2 b+2 a+g+2 c- u ):
  \end{array}$$
  \end{thm}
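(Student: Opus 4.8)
By Proposition \ref{Svalues}, for each of the parameters $e\in\{u,a,c,g\}$ the composite $P_{e+}(e-1)\circ P_{e-}(e)$ is, modulo $S\!E_4(e)$, a scalar (Schur's lemma applies because $S\!E_4$ is irreducible for generic parameters, its spectral type being that of the generic $E_4$, as assumed throughout \S\ref{GenShift}); that scalar, a polynomial in $a,b,c,g,u$, is the Svalue, and it is pinned down once the shift operators of Theorem \ref{shiftopSE4} are normalized as in \S\ref{GenShift}, up only to the sign built into the normalization. So the task is to make the four scalars explicit, and the mechanism is always the same: multiply the two shift operators, right-divide by $S\!E_4$, and read off the necessarily constant remainder.

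I would dispose of the Svalue for $u$ first, by the shortcut already used for $E_4$ in \S\ref{E4ap1}. Under the parametrization of \S\ref{newSE4} the shift $u\to u+1$ is exactly the shift $e\to e'=(e_1-1,\dots,e_4-1,e_5+1,\dots,e_8+1)$, so $P_{u+}=\partial$, its inverse is $R=x^2(x-1)^2\partial^3+p_3\partial^2+p_2\partial+p_1$ with $R\circ\partial\equiv -p_0\bmod S\!E_4$ and $p_0=e_5e_6e_7e_8$, and substituting $e_5=u+1$, $e_6=u-a-b-2c-g-1$, $e_7=u-2c$, $e_8=u-2-g-2c-2b-2a$ turns $-p_0$ into the displayed $Svu$ (the three reversed signs cancel against the leading one).

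Next I would treat the Svalue for $a$ using the operators $P_{ap},Q_{ap}$ and $P_{an},Q_{an}$ for $a\to a\pm1$ written out in \S\ref{newSE4}: form $P_{an}(a+1)\circ P_{ap}(a)$ and right-divide by $S\!E_4(a)$, or --- equivalently and more comfortably --- solve $P_{an}(a+1)\circ P_{ap}(a)=D\circ S\!E_4(a)+Sv_a$ for an operator $D$ and the constant $Sv_a$, exactly as was done for $S\!E_5$ in \S\ref{shiftopPam_ofSE5}; factoring the constant gives $Sva$, and the fact that the remainder comes out $x$-free is a check on the operators. For the Svalues of $c$ and $g$ I would use the explicit shift operators for $c\to c\pm1$ and $g\to g\pm2$ recorded for $S\!E_4$ in \S\ref{newSE4} (obtained from the corresponding operators of $S\!E_3$ via the middle-convolution recipe of \S\ref{shiftop_SE3_to_SE6} applied to $S\!E_4=\partial^{u}\cdot\partial S\!E_3\cdot\partial^{-u}$, or else directly from $EPQE$), compose $P_{c+}(c-1)\circ P_{c-}(c)$ and $P_{g+}(g-2)\circ P_{g-}(g)$, reduce modulo $S\!E_4$, and simplify. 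As an independent check one may compare with the $S\!E_3$ Svalues $Sv_c,Sv_g$ of \cite{HOSY2}: since middle convolution is an isomorphism on the pertinent monodromy data and transforms shift operators in a controlled way (\cite{Osh} Ch.~11), $Svc$ and $Svg$ should reproduce those up to an explicit product of linear factors in the addition and convolution data.

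The main obstacle is purely computational. The shift operators for $c$ and $g$, and to a lesser extent those for $a$, have sizeable polynomial coefficients in the five parameters; so the right-division of an operator of order $5$ or $6$ by the fourth-order $S\!E_4$ has to be carried out with computer algebra (Maple's {\sl DEtools}), after which the bulky constant remainder must be coaxed into the compact factored form stated. No conceptual difficulty arises beyond organizing this bookkeeping and confirming in each case that the remainder is genuinely independent of $x$.
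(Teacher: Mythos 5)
Your proposal is correct and follows essentially the same route as the paper: the paper's proof of this theorem is precisely "compose the explicit shift operators of \S 9.1 (with the $u$-shift coming from the $\partial$-shift of $E_4$ in \S 8) and reduce modulo $S\!E_4$ by computer algebra," which is what you describe. Your sign bookkeeping for $Svu$ via $p_0=e_5e_6e_7e_8$ under the parametrization of \S 9 is also consistent with the stated factorization.
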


\begin{thm} \label{redcondSE4}
  If one of the values
\begin{gather*}
-a,\ -b,\ (g+1)/2,\ a+g/2+1,\ b+g/2+1,\ c+g/2+1, \\[2mm]
a+b+c+g+2,\quad  a+c+g/2+3/2,\quad b+c+g/2+3/2,\\[2mm] 
e_5,\ e_6,\ e_7,\ e_8
\end{gather*}
is an integer, then the equation $S\!E_4$ is reducible.
\end{thm}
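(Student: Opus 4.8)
\emph{Proof plan.} The strategy is the one already used for $S\!E_6$ (Theorem~\ref{SE6redcond}) and $S\!E_5$ (Theorem~\ref{redcondSE5}): read the reducibility conditions off the explicit Svalues. By Theorem~\ref{shiftopSE4} the equation $S\!E_4$ carries shift operators for the shifts $a\to a\pm1$, $b\to b\pm1$, $c\to c\pm1$, $g\to g\pm2$, $u\to u\pm1$, and the Svalues $Sv_u$, $Sv_a$, $Sv_c$, $Sv_g$ are the products of linear forms in $(a,b,c,g,u)$ stated just above; the one not written out, $Sv_b$, is obtained from $Sv_a$ by the substitution $a\leftrightarrow b$, which (together with $x\mapsto 1-x$) is a symmetry of $S\!E_4$ because it interchanges $e_1\leftrightarrow e_3$ and $e_2\leftrightarrow e_4$ while fixing $e_5,\dots,e_8$. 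By Proposition~\ref{Sred}, $S\!E_4$ is reducible at every parameter value where one of $Sv_u,Sv_a,Sv_b,Sv_c,Sv_g$ vanishes, and by Theorem~\ref{red_atoap} reducibility then propagates through the whole chain of shift relations. Since the shift operators move $a,b,c,u$ by $1$ and $g$ by $2$, reducibility spreads from the hyperplane $\{\ell=0\}$ of a linear factor $\ell$ of one of these Svalues to the entire set $\{\,\ell/d\in\mathbb Z\,\}$, where $d$ is the greatest common divisor of the coefficients of $a,b,c,u$ in $\ell$ together with twice the coefficient of $g$ in $\ell$.

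Translating each factor is then routine. For a factor in which at least one of $a,b,c,u$ has an odd coefficient one gets $d=1$: the bare factor $a+1$ of $Sv_a$ (resp.\ $b+1$ of $Sv_b$) gives $a\in\mathbb Z$ (resp.\ $b\in\mathbb Z$); the factors $u+1$, $u-a-b-2c-g-1$, $u-2c$, $u-2-g-2c-2b-2a$ of $Sv_u$, which up to sign are $e_5,e_6,e_7,e_8$, give $e_5,e_6,e_7,e_8\in\mathbb Z$ (already implied by Proposition~\ref{E4red}, since $S\!E_4$ is a specialization of $E_4$); and $a+b+c+g+2$, $a+b+c+g+3$ give $a+b+c+g\in\mathbb Z$. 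For a factor in which $a,b,c,u$ occur only with even coefficients and $g$ occurs with a nonzero coefficient one gets $d=2$: thus $2a+g+2$, $2b+g+2$, $g+2c+2$ give $(2a+g)/2,(2b+g)/2,(2c+g)/2\in\mathbb Z$; $g+1$ gives $(g+1)/2\in\mathbb Z$; and $2a+2c+g+3$, $2b+2c+g+3$ give $(2a+2c+g+1)/2,(2b+2c+g+1)/2\in\mathbb Z$. The remaining factors of $Sv_a$, $Sv_c$, $Sv_g$ are all of the form $-e_j+k$ with $j\in\{6,7,8\}$ and $k$ a small integer, so they merely repeat $e_6,e_7,e_8\in\mathbb Z$.

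The last step is bookkeeping: one checks that the union of the conditions produced above is exactly $\{\,a,\ b,\ (g+1)/2,\ (2a+g)/2,\ (2b+g)/2,\ (2c+g)/2,\ a+b+c+g,\ (2a+2c+g+1)/2,\ (2b+2c+g+1)/2,\ e_5,\ e_6,\ e_7,\ e_8\,\}$, with no omissions and no extras, which is the assertion. I do not expect a genuine obstacle, because the substantive work --- constructing the shift operators of $S\!E_4$ and computing their Svalues --- precedes this theorem. The only points that require care are keeping the modular arithmetic straight when a factor has an even coefficient or involves $g$ (whose shift has step $2$), and remembering to invoke the $a\leftrightarrow b$ symmetry, since $Sv_b$ was not displayed explicitly.
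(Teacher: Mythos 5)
Your proposal is correct and follows essentially the same route as the paper, which obtains Theorem \ref{redcondSE4} exactly as it did Theorems \ref{SE6redcond} and \ref{redcondSE5}: read the linear factors off the explicitly computed Svalues of the shift operators of Theorem \ref{shiftopSE4}, use Proposition \ref{Sred} to get reducibility where an Svalue vanishes, and propagate along the shift lattice via Theorem \ref{red_atoap}, with the missing $Sv_b$ supplied (as you do) by the $a\leftrightarrow b$, $x\mapsto 1-x$ symmetry. Your factor-by-factor translation, including the step-$2$ bookkeeping for $g$ and for factors with only even coefficients, reproduces the paper's list exactly.
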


\subsection{Reducible cases of $S\!E_4$} 

Added to Proposition \ref{E4redE3}, we have
\begin{prp}\label{factorSE5cont}(1) Let $\Delta$ be one of $\{-a,-b,a+g/2+1,b+g/2+1,a+c+g/2+3/2,b+c+g/2+3/2.$ Then 
\[ \def\arraystretch{1.1} \begin{array}{rcccccc} 
 \Delta=\ &\ \cdots\ &\   -1,\ &\ 0,   \ &\ 1,    \ &\ 2,    \ &\ \cdots\\
    \ &\ \cdots\ &\ [3,1]\ &\ [3,1]A0\ &\ [1,3]A0\ &\ [1,3]\ &\ \cdots\\
\end{array}\]
When the factors are free of apparent singularities,
the factor $[3]$ is essentially equal to a specialization of ${}_3E_2$.

\par\noindent
(2) Let $\Delta$ be one of $\{(g+1)/2,c+g/2+1,a+b+c+g+2\}$. Then
\[\begin{array}{rcccccc} 
  \Delta=\ &\ \cdots\ &\   -1,\ &\ 0,   \ &\ 1,    \ &\ 2,    \ &\ \cdots\\
    \ &\ \cdots\ &\ [2,2]\ &\ [2,2]A0\ &\ [2,2]A0\ &\ [2,2]\ &\ \cdots\\
\end{array}\]
\end{prp}

\subsection{Summary: reducibility conditions for $S\!E_6$,
  $S\!E_5$, $S\!E_4$, and $S\!E_3$}

The reducibility conditions for $S\!E_6,\dots, S\!E_3$ in terms of $\{a,b,c,g\}$, stated in this paper, can be tabulated as follows:

$$\begin{array}{ccccc}
\quad &\quad S\!E_6\quad &\quad S\!E_5\quad &\quad S\!E_4
\quad &\quad S\!E_3\\[2mm]
a          &\{2,4\}&\{1,4\}&\{1,3\}&\{1,2\}\\[2mm]
b          &\{2,4\}&\{1,4\}&\{1,3\}&\{1,2\}\\[2mm]
c          &\{2,4\}&\{2,3\}&-  &\{1,2\}\\[2mm]
1/2+g/2    &\{2,4\}&\{2,3\}&\{2,2\}&\{1,2\}\\[2mm]
a+g/2      &\{2,4\}&\{2,3\}&\{1,3\}&\{1,2\}\\[2mm]
b+g/2      &\{2,4\}&\{2,3\}&\{1,3\}&\{1,2\}\\[2mm]
c+g/2      &\{2,4\}&\{1,4\}&\{2,2\}&\{1,2\}\\[2mm]
a+b+c+g    &\{2,4\}&\{2,3\}&\{2,2\}&\{1,2\}\\[2mm]
a+b+c+g/2  &\{2,4\}&\{1,4\}&-  &\{1,2\}\\[2mm]
a+b+g/2+1/2&- &\{1,4\}&-  &-\\[2mm]
a+c+g/2+1/2&-  &-  &\{1,3\}&-\\[2mm]
b+c+g/2+1/2&-  &-  &\{1,3\}&-
\end{array}$$
\par\medskip\noindent
For example the third line reads: when $c\in\mathbb{Z}$, the equations $S\!E_6$, $S\!E_5$ and $S\!E_3$ are reducible of type $\{2,4\}$,  $\{2,3\}$ and $\{1,2\}$, respectively, but no factorization is found for $S\!E_4$.    

\newpage

\bibliographystyle{amsalpha}

\noindent 
Yoshishige Haraoka

Josai University, Sakado 350-0295, Japan

haraoka@kumamoto-u.ac.jp

\medskip \noindent
Hiroyuki Ochiai

Department of Mathematics, Kyushu University, Fukuoka 819-0395, Japan 

ochiai@imi.kyushu-u.ac.jp

\medskip \noindent 
Takeshi Sasaki

Kobe University, Kobe 657-8501, Japan 

yfd72128@nifty.com

\medskip \noindent
Masaaki Yoshida

Kyushu University, Fukuoka 819-0395, Japan 

myoshida1948@jcom.home.ne.jp

\end{document}